\documentclass[a4paper, 11pt, twoside, leqno]{article}

\usepackage[T1]{fontenc}
\usepackage[utf8x]{inputenc}
\usepackage[english]{babel}
\usepackage{lmodern}               
\usepackage{amsmath,amsthm,amssymb}
\usepackage{mathrsfs,esint,bbm,stmaryrd}
\usepackage{enumitem,graphicx,xcolor}
\usepackage[textwidth=15cm,centering,headheight=24pt]{geometry}
\usepackage{authblk}


\newtheorem{theorem}{Theorem}           

\newtheorem{lemma}[theorem]{Lemma}
\newtheorem{proposition}[theorem]{Proposition}

\newtheorem{mainthm}{Theorem}           

\theoremstyle{definition}              
\newtheorem{definition}{Definition}

\theoremstyle{remark}                  
\newtheorem{step}{Step}

\newtheorem{remark}{Remark}


\DeclareMathOperator{\dist}{dist}                                   
\let\div\relax
\DeclareMathOperator{\div}{div}                                     

\newcommand{\abs}[1]{\left| #1 \right|}                             
\newcommand{\norm}[1]{\left\| #1 \right\|}                          

\newcommand{\csubset}{\subset\!\subset}                             

\DeclareMathAlphabet{\mathpzc}{OT1}{pzc}{m}{it}

\newcommand{\T}{\mathrm{T}}
\renewcommand{\d}{\mathrm{d}}
\renewcommand{\o}{\mathrm{o}}
\newcommand{\N}{\mathbb{N}}       
\newcommand{\R}{\mathbb{R}}


\newcommand{\NN}{\mathscr{N}}     
\newcommand{\QQ}{\mathcal{Q}}
\renewcommand{\H}{\mathscr{H}}

\newcommand{\eps}{{\varepsilon}}

\newcommand{\nl}{{\mathrm{nl}}}

\renewcommand{\S}{\mathbb{S}}



\definecolor{lightblue}{rgb}{0.22,0.45,0.70}   
\definecolor{darkgray}{gray}{0.4}    
\definecolor{lightgray}{gray}{0.8}
\newcommand{\BBB}{}

\title{H\"older regularity and convergence for a non-local model \\
 of nematic liquid crystals in the large-domain limit}
\author{Giacomo Canevari\thanks{
 Dipartimento di Informatica --- Universit\`a di Verona,
 Strada le Grazie 15, 37134 Verona, Italy. \\
 \emph{E-mail address}: \texttt{giacomo.canevari@univr.it}} \mbox{}
 and Jamie M. Taylor\thanks{BCAM --- Basque Center for Applied Mathematics,
 Alameda de Mazarredo 14, 48009 Bilbao, Spain. \\
 \emph{E-mail address}: \texttt{jtaylor@bcamath.org}}}
\date{\today}


\begin{document}

\maketitle

\begin{abstract}
We consider a non-local free energy functional, modelling a competition between entropy and pairwise interactions reminiscent of the second order virial expansion, with applications to nematic liquid crystals as a particular case. We build on previous work on understanding the behaviour of such models within the large-domain limit, where minimisers
converge to minimisers of a quadratic elastic energy with manifold-valued constraint, analogous to harmonic maps.
We extend this work to establish H\"older bounds for (almost-)minimisers on bounded domains, and demonstrate stronger convergence of (almost)-minimisers away from the singular set of the limit solution. The proof techniques bear analogy with recent work of singularly perturbed energy functionals, in particular in the context of the Ginzburg-Landau and Landau-de Gennes models. 
\end{abstract}

\numberwithin{equation}{section}
\numberwithin{definition}{section}
\numberwithin{theorem}{section}
\numberwithin{remark}{section}
\numberwithin{goal}{section}

\section{Introduction}

\subsection{Variational models of liquid crystals}

Liquid crystalline systems are those which sit outside of the classical solid-liquid-gas trichotomy. While there are a plethora of different systems classified as liquid crystals, they can be broadly described as fluid systems where molecules admit a long range order of certain degrees of freedom. This is in contrast to classical fluids, which lack long range correlations between molecules. The fluidity of the systems makes them ``soft'', that is, easily susceptible to influence by external influences such as fields or stresses, whilst the long range ordering permits anisotropic electrostatic and optical behaviour. These two properties combined make them ideal for a variety of technological applications, as their anisotropy is exploitable whilst their softness makes them easy to manipulate. 

The simpliest liquid crystalline system is that of a nematic liquid crystal. These are systems of elongated molecules, often idealised as having axial symmetry, which form phases with no long range positional order, but where the long axes of molecules is generally well aligned over larger length scales. Even in the well studied case of nematics there are a variety of models that one may use to study their theoretical behaviour, where the choice of model is usually dependent on the length scales considered and the type of defects one wishes to observe. 

One of the earliest and most studied free energy functionals one may consider in continuum modelling is the Oseen-Frank model~\cite{frank1958liquid}. In the simpliest formulation, we consider a prescribed domain~$\Omega\subseteq\mathbb{R}^3$ and a unit vector field as our continuum variable $n\colon\Omega\to\mathbb{S}^2$, interpreted as the local alignment axis of molecules. As molecules are assumed to be (statistically) head-to-tail symmetric, we interpret the configurations~$n$, $-n$ as physically equivalent, {that is, they are two mathematically distinct representations of the same physical state.} In the simplified {\it one-constant approximation}, we look for minimisers of the free energy 
\begin{equation}\label{refOFOneConstant}
 \int_\Omega \frac{K}{2} \abs{\nabla n(x)}^2 \, \d x,
\end{equation}
subject to certain boundary conditions, although more general formulations are possible. The problem has attracted interest not only from the liquid crystal community, but also from the mathematical community as the prototypical harmonic map problem. If a prescribed Dirichlet boundary condition admits non-zero degree, then by necessity any $n$ satisfying it must admit discontinuities, meaning that defects/singularities are an unavoidable part of the model's study. 

More generally, one may consider an Oseen-Frank energy where different modes of deformation are penalised to different extents. Neglecting the saddle-splay null-Lagrangian term, this gives a free energy of the form 
\begin{equation}\label{eqOF}
 \int_\Omega \frac{K_1}{2}(\nabla \cdot n(x))^2
 + \frac{K_2}{2}(n(x)\cdot \nabla \times n(x))^2
 + \frac{K_3}{2}|n(x)\times \nabla\times n(x)|^2\,\d x. 
\end{equation}
The constants $K_1$, $K_2$, $K_3$ are known as the Frank constants, and represent the penalisations of splay, twist, and bend deformations respectively. In the case where $K_1=K_2=K_3=K$, we reclaim the one-constant approximation~\eqref{refOFOneConstant}.

It is natural to ask if such a free energy can be justified. While the original formulation was more phenomenological in nature and based solely on symmetry arguments and a small-deformation assumption, attempts have been made to identify the Oseen-Frank model as a large-domain limit of a more fundamental model, the Landau-de Gennes model \cite{de1995physics,mottram2014introduction}. In the Landau-de Gennes model, the continuum variable is the {\it Q-tensor}, corresponding to the normalised second moment of a one-particle distribution function. Explicitly, if the distribution of the long axes of molecules in a small neighbourhood of a point $x\in\Omega$ are described by a probability distribution $f(x, \, \cdot)\colon\mathbb{S}^2\to[0, \, +\infty)$, we define the Q-tensor at the point $x$ as 
\begin{equation}
 Q(x) = \int_{\mathbb{S}^2} f(x, \, p) \left(p\otimes p-\frac{1}{3}I\right)\,\d p.
\end{equation}
As molecules are assumed to be head-to-tail symmetric, a molecule is as likely to have orientation $p\in\mathbb{S}^2$ as $-p$, so that $f(x, \, p)=f(x, \, -p)$. For this reason the first moment of $f(x, \, \cdot)$ will always vanish, making the Q-tensor the first non-trivial moment, containing information on molecular alignment. Q-tensors are, following their definition, traceless, symmetric, $3\times 3$ matrices. We denote this set as 
\begin{equation}
 \text{Sym}_0(3)=\left\{Q\in\mathbb{R}^3\colon Q=Q^T,\, \text{Trace}(Q)=0\right\}.
\end{equation} 
The Q-tensor contains more information than the director field, namely that it does not force the interpretation of axially symmetric ordering about an axis (less symmetric configurations are permitted), and the degree of orientational ordering is permitted to vary. 

Depending on their eigenvalues, they come in one of three varieties. 
\begin{itemize}
\item If all eigenvalues are equal, $Q=0$, and we say that~$Q$ is {\it isotropic}, and representative of a disordered system. In particular, if $f$ is a uniform distribution on $\mathbb{S}^2$, $Q=0$.
\item If two eigenvalues are equal and the third is distinct, we say~$Q$ is {\it uniaxial}. A uniaxial Q-tensor can be written as $Q=s\left(n\otimes n-\frac{1}{3}I\right)$, for a scalar $s$ and unit vector $n$. We interpret $n$ as the favoured direction of alignment, and $s$ as a measure of the degree of ordering molecules about $n$. 
\item If all three eigenvalues are distinct, we say that $Q$ is {\it biaxial}.
\end{itemize}

The corresponding free energy to be minimised is
\begin{equation}
\int_\Omega \psi_b(Q(x)) + W(Q(x),\nabla Q(x))\,\d x.
\end{equation}
The function $\psi_b\colon\text{Sym}_0(3)\to\mathbb{R}\cup\{+\infty\}$ is a frame indifferent bulk potential, which may be taken as a polynomial or the Ball-Majumdar singular potential {(\cite{ball2010nematic}, and further discussion in \eqref{hp:Hfirst}--\eqref{hp:Hlast})}. Its main characteristic is that, in the cases considered, it is minimised on the set
\begin{equation} \label{NN}
 \NN =\left\{Q\in\text{Sym}_0(3)\colon \textrm{there exists }
 n\in \mathbb{S}^2 \textrm{ such that } 
 Q=s_0\left(n\otimes n-\frac{1}{3}I\right)\right\},
\end{equation}
with~$s_0$ a temperature, concentration and material dependent constant. The elastic energy~$W$ is minimised when $\nabla Q=0$. While many forms are possible, by symmetry the only frame-indifferent, quadratic energy that only depends on the gradient of~$Q$ is of the form 
\begin{equation}
 W(\nabla Q) = \frac{L_1}{2} Q_{ij,k}Q_{ij,k}
 + \frac{L_2}{2} Q_{ij,k}Q_{ik,j} 
 + \frac{L_3}{2}Q_{ij,j}Q_{ik,k},
\end{equation}
where Einstein summation notation is used. While Oseen-Frank represents nematic defects as discontinuities in the continuum variables, the Landau de-Gennes approach admits a different description, where nematic defects point defects are typically described as a melting of nematic order, that is~$Q=0$. This permits smooth configurations to describe defects.

In an appropriate large-domain limit of a rescaled problem, the contributions of the bulk energy become overwhelming, and we expect the minimisers to converge to minimisers of a constrained problem, where we minimise the elastic energy
\begin{equation}
 \int_\Omega W(\nabla Q)\,\d x,
\end{equation}
subject to the constraint that $Q(x)\in\NN$ almost everywhere. In the case where~$Q=s_0\left(n\otimes n-\frac{1}{3}I\right)$ almost everywhere for some~$n\in W^{1,2}(\Omega,\mathbb{S}^2)$, we say that~$Q$ is {\it orientable}, and the problem in the presence of Dirichlet boundary conditions that are $\NN$-valued almost everywhere becomes equivalent to that of the minimising the energy \eqref{eqOF} for~$n$. The constants~$L_i$ and~$K_i$ are related in the case of Dirichlet boundary conditions where null-Lagrangian terms may be neglected as 
\begin{equation}
\begin{split}
\frac{1}{s_0^2}K_1=& 2L_1+L_2+L_3,\\
\frac{1}{s_0^2}K_2=&2L_1,\\
\frac{1}{s_0^2}K_3=&2L_1.
\end{split}
\end{equation}
An energy purely quadratic in $\nabla Q$ cannot give rise to three independent elastic constants in the Oseen-Frank model, with the so-called ``cubic term'' $Q_{ij}Q_{kl,i}Q_{kl,j}$ often being used to fill the degeneracy. Such a term does not arise from the model we will consider, although a more complex variant taking into account molecular length scales has been proposed to avoid this issue~\cite{creyghton2018scratching}.

Studying the convergence of minimisers of Landau-de Gennes towards the Oseen-Frank limit has attracted interest, 
with Majumdar and Zarnescu showing global $W^{1,2}$~convergence 
and uniform convergence away from singular sets in
the one-constant case~\cite{majumdar2010landau}, 
Nguyen and Zarnescu proving convergence results
in stronger topologies~\cite{NguyenZarnescu},
Contreras, Lamy and Rodiac generalising the approach
to other harmonic-map problems~\cite{contreras2018convergence},
and further extensions by Contreras and Lamy~\cite{contreras2018singular}  and Canevari, Majumdar and Stroffolini~\cite{canevari2019minimizers} 
to more general elastic energies.
In other settings, the $W^{1,2}$-convergence does not 
hold globally but only locally, away from the singular sets, 
due to topological obstructions carried by the boundary data
and/or the domain 
(see e.g.~\cite{BaumanParkPhillips,GolovatyMontero, pirla3, IgnatJerrard}).
Recently, Di Fratta, Robbins, Slastikov and Zarnescu
found higher-order Landau-de Gennes corrections
to the Oseen-Frank functional, in two dimensional domains,
by studying the $\Gamma$-expansion of the
Landau-de Gennes functional in the large-domain
limit~\cite{diFrattaetal2020}.
The problem holds many parallels to the now-classical Ginzburg-Landau problem~\cite{bethuel1994ginzburg}.
Other singular limits and qualitative features
of Landau-de Gennes solutions
have been studied too; see, for instance,~\cite{ContrerasLamy-Biaxial,
Fratta16, HenaoMajumdarPisante, INSZ-Instability, kitavtsev2016liquid, INSZ-Hedgehog, INSZ-2dstability, INSZ-symmetry} and the references therein.

While Landau-de Gennes has proven an effective model in many situations, there are still open questions as to how one may justify the model in a rigorous way. While one may use Landau-de Gennes, in appropriate situations, to justify Oseen-Frank, a rigorous justification of Landau-de Gennes itself is lacking. Historically it was justified on a phenomenological basis, but other work has been able to provide Landau-de Gennes as a gradient expansion of a non-local mean field model~\cite{garlea2017landau,han2015microscopic}. Justification by formal gradient expansions leaves open the question as to the {consistency of minimisers of the original free energy with minimisers of its approximation, that is, are minimisers of the approximate model necessarily good approximations of the minimisers of the original problem?} To this end, recent work has been focused on rigorous asymptotic analysis of non-local free energies, which similarly produce the Oseen-Frank model in a large-domain limit~\cite{liu2017oseen,liu2018small,taylor2018oseen,taylor2019gamma}. These approaches ``bypass'' the intermediate and non-rigorous derivation of Landau-de Gennes. This is analogous to recent investigations into peridynamics, a formulation of elasticity based on non-local interactions. These formulations of elasticity bear mathematical similarity with the mean-field theory approach, where stress-strain relations are described in terms of non-local operators on the deformation map, rather than derivatives as in the more classical formulations of elasticity~\cite{bellido2015hyperelasticity,silling2008convergence}. The classical density functional theory we will consider in this work is based on a simplified competition between an entropic contribution to the energy, favouring disorder, and an interaction energy, favouring order. The models themselves are justified as a second order truncation of the virial expansion in the dilute regime based on long-range attractive interactions in the style of Maier and Saupe \cite{maier1959einfache} and with mathematical similarity to the model of Onsager \cite{onsager1949effects}.
 Explicitly, given the one-particle distribution function $f(x,\cdot)$ in a neighbourhood of $x$, we define a free energy functional 
\begin{equation}\label{eqMeanField}
 \begin{split}
  &k_BT\rho_0 \int_{\Omega\times\mathbb{S}^2} f(x, \, p)\ln f(x, \, p) \,\d x\,\d p \\
  &\qquad\qquad\qquad - \frac{\rho_0^2}{2} \int_{\Omega\times\mathbb{S}^2}
  \int_{\Omega\times\mathbb{S}^2}
  f(x, \, p)f(y, \, p) \mathcal{K}(x-y,\, p, \, q)\,\d x\,\d p\,\d y\,\d q.
 \end{split}
\end{equation}  
$\rho_0>0$ is the number density of particles in space, $k_B$~the Boltzmann constant, $T>0$~temperature and $\mathcal{K}(z, \, p, \, q)$ denotes the interaction energy of particles with orientations $p$, $q$ and with centres of mass separated by a vector~$z$. The entropic term on the left is convex and readily shown to be minimised at a uniform distribution, that is, an isotropic disordered system. The nature of the pairwise interaction energy on the right is that nearby particles will prefer to be aligned with each other. We see that temperature and concentration mediate the competition between these opposing mechanisms. Recent work has established the Oseen-Frank energy~\eqref{eqOF} in terms of a large-domain limit of the energy~\eqref{eqMeanField} under certain assumptions, in which the elastic constants~$K_i$ can be related to second moments of the interaction kernel. Previous work has established weaker modes of convergence, while in this work we will establish stronger convergence of minimisers away from defect sets, analogous to the approach taken by Majumdar and Zarnescu for the Landau-de Gennes model~\cite{majumdar2010landau}. 
{\BBB~The results however will require stronger assumptions on the regularity and decay of the interaction kernel than those of \cite{taylor2018oseen}, owing to the need for more precise control on the decay of various integral quantities.}

\subsection{Simplification of the model and non-dimensionalisation}

Here and throughout the sequel, we consider the more general case where molecules admit an internal degree of freedom $p$ in a manifold $\mathcal{M}$. We will employ a macroscopic order parameter $u\in\mathbb{R}^m$ to emphasise the analysis is not limited to the concrete case of nematic liquid crystals.

Through most of the paper, we consider the case
where~$f$ is prescribed on
$\left(\mathbb{R}^3\setminus\frac{1}{\eps}\Omega\right)\times\mathcal{M}$,
where~$\Omega$ is a non-dimensional reference domain 
and~$\eps>0$ is a small parameter, representative of 
the inverse of a large length scale of the domain.
In Section~\ref{sect:bd}, we relax this assumption
and study a minimisation problem where~$f$ is prescribed only
in a neighbourhood of the domain, of suitable thickness.
We consider the free energy 
\begin{equation}
 \begin{split}
  &\tilde{\mathcal{G}}_\eps(f) = k_BT\rho_0 
  \int_{\frac{1}{\eps}\Omega\times\mathcal{M}} f(x, \, p)\ln f(x, \, p)\,\d x\,\d p \\
  &\qquad\qquad\qquad -\frac{\rho_0^2}{2} \int_{\mathbb{R}^3\times\mathcal{M}}
  \int_{\mathbb{R}^3\times\mathcal{M}} f(x,\, p)f(y, \, q)
  \mathcal{K}(x-y, \, p, \, q)\,\d x\,\d p\,\d y\,\d q.
 \end{split}
\end{equation}
For simplification of the problem, we take the interaction energy to be of the form 
\begin{equation}
 \mathcal{K}(z, \, p, \, q)=K(z)\sigma(p)\cdot \sigma(q),
\end{equation}
where~$\sigma\in L^\infty(\mathcal{M},\mathbb{R}^m)$ 
is some ``microscopic order parameter'',
and $K\colon\mathbb{R}^3\to\mathbb{R}^{m\times m}$
is a symmetric tensor field, which will satisfy certain technical conditions 
(see~\eqref{hp:Kfirst}--\eqref{hp:Klast} in Section~\ref{sect:setting}). By applying Fubini we may then introduce a ``macroscopic order parameter'',
$u\in L^\infty(\mathbb{R}^3,\mathbb{R}^m)$ by 
\begin{equation}
 u(x) = \int_{\mathcal{M}}f(x, \, p)\sigma(p)\,\d p,
\end{equation}
and re-write the interaction energy as 
\begin{equation}
 \begin{split}
  &-\frac{\rho_0^2}{2} \int_{\mathbb{R}^3\times\mathcal{M}}
  \int_{\mathbb{R}^3\times\mathcal{M}} 
  f(x, \, p)f(y, \, p)\mathcal{K}(x-y, \, p, \, q) \,\d x\,\d p\,\d y\,\d q \\
  &\qquad\qquad\qquad = -\frac{\rho_0^2}{2} 
  \int_{\mathbb{R}^3}\int_{\mathbb{R}^3} K(x-y)u(x)\cdot u(y)\,\d x\,\d y.
 \end{split}
\end{equation}
While it is not possible to write the entropic term explicitly in terms of $u$, we may provide a lower bound by means of the singular Ball-Majumdar/Katriel potential and its extensions~\cite{ball2010nematic,katriel1986free,taylor2016maximum} by 
\begin{equation}
 \int_{\frac{1}{\eps}\Omega\times\mathcal{M}}
 f(x, \, p)\ln f(x, \, p)\,\d x\,\d p 
 \geq \int_{\frac{1}{\eps}\Omega}\psi_s(u(x))\,\d x,
\end{equation}
where the function $\psi_s\colon\mathbb{R}^m\to\mathbb{R}\cup\{+\infty\}$ 
is defined by 
\begin{equation} \label{BallMajumdar}
 \psi_s(u)=\min\left\{\int_{\mathcal{M}}f(p)\ln f(p)\,\d p \colon 
 f\geq 0 \textrm{ a.e.,} \, \int_{\mathcal{M}} f(p)\,\d p=1,
 \, \int_{\mathcal{M}} f(p)\sigma(p)\,\d p = u\right\} \! ,
\end{equation}
where by convention~$\psi_s(u)=+\infty$ when the constraint set is empty. Note that the minimisation problem \eqref{BallMajumdar} is strictly convex, thus solutions are necessarily unique, and we may define~$f_u$ to be the corresponding minimiser for $u\in \QQ=\left\{u: \psi_s(u)<+\infty\right\}$. That is, 
\begin{equation}
f_u=\text{arg min}\left\{\int_{\mathcal{M}}f(p)\ln f(p)\,\d p \colon 
 f\geq 0 \textrm{ a.e.,} \, \int_{\mathcal{M}} f(p)\,\d p=1,
 \, \int_{\mathcal{M}} f(p)\sigma(p)\,\d p = u\right\}.
\end{equation} 
The precise definition of~$\psi_s$ will be unimportant in this work, and we employ any function~$\psi_s$ satisfying certain technical assumptions in the sequel (see~\eqref{hp:Hfirst}--\eqref{hp:Hlast} in Section~\ref{sect:setting})

We in fact have the result 
that $f^*$ is a minimiser of~$\tilde{\mathcal{G}}_\eps$ if and only if, for $u^*(x)=\int_{\mathbb{S}^2}f^*(x, \, p)\sigma(p)\,\d p$, $u^*$ is a minimiser of 
\begin{equation} \label{energy-1}
\tilde{\mathcal{F}}_\eps(u)=k_BT\rho_0\int_{\frac{1}{\eps}\Omega}\psi_s(u(x))\,\d x-\frac{\rho_0^2}{2}\int_{\mathbb{R}^3}\int_{\mathbb{R}^3}K(x-y)u(x)\cdot u(y)\,\d x\,\d y,
\end{equation}
with $f^*=f_{u^*}$. This is readily seen by writing the minimisation as a two-step process, first minimising over all $f$ such that $f_u=f$, and later minimising over $u$ and noting that the first minimisation may be performed pointwise almost-everywhere in $\mathbb{R}^3$, as in \cite{taylor2018oseen}. 
That is to say, we have a simpler, macroscopic energy with equivalent minimisers. 
By introducing a change of variables, 
\[
 x=\frac{x^\prime}{\eps}, \quad y=\frac{y^\prime}{\eps}, \quad
 u^\prime(x^\prime)=u(x), \quad \eps^\prime := \frac{\eps}{\rho_0^{1/3}},
 \quad K^\prime(x^\prime)=\frac{1}{k_BT}K(\eps^\prime x),
\]
and a (non-dimensional) constant~$C_{\eps^\prime}$ 
to be specified later, we rescale the domain and obtain the free energy we will consider for the remainder of this work, so that 
\begin{equation} \label{energy0}
\begin{split} 
 E_{\eps^\prime}(u^\prime)
 &:= \frac{\eps}{k_BT\rho_0^{1/3}} \tilde{\mathcal{F}}_\eps(u) + C_{\eps^\prime}\\
 &= \frac{1}{{\eps^\prime}^2} \int_{\Omega} \psi_s(u^\prime(x^\prime))\,\d x^\prime 
 - \frac{1}{2{\eps^\prime}^5}\int_{\mathbb{R}^3} \int_{\mathbb{R}^3}
 K^\prime\left(\frac{x^\prime-y^\prime}{\eps^\prime}\right)u^\prime(x^\prime)\cdot u^\prime(y^\prime)\,\d x^\prime\,\d y^\prime + C_{\eps^\prime}. 
\end{split}
\end{equation}
The additive constant~$C_{\eps^\prime}$ 
is irrelevant for the purpose of minimisation;
however, we will make a specific choice of~$C_{\eps^\prime}$
(see Equation~\eqref{C_eps} below) for analytical convenience.
We will consider the regime as $\eps^\prime\to 0$ in this work. 
From the definition of $\eps^\prime$, this may be interpreted in two forms, one in which the characteristic length scale of the domain, $\frac{1}{\eps}$, becomes large, and one in which the density $\rho_0$ becomes large. However, as the energy we consider is based on the second order virial expansion which is explicitly a model for dilute regimes, we interpret the limit $\eps^\prime\to 0$ as the former, that is, a large-domain limit.
In the sequel we omit the primes and consider~\eqref{energy0} as 
our free energy functional to be minimised at scale~$\eps>0$.

\section{Technical assumptions and main results}
\label{sect:setting}

Let~$\text{Sym}(m)$ be the space
of~$(m\times m)$-symmetric matrices, with real coefficients.
Given an interaction kernel~$K\colon\R^3\to\text{Sym}(m)$
and~$\eps> 0$, we define~$K_\eps(z) := \eps^{-3} K(\eps^{-1}z)$
for any~$z\in\R^3$. Then, we may rewrite the functional~\eqref{energy0} as
\begin{equation} \label{energy}
 \begin{split}
  E_\eps(u) := - \frac{1}{2\eps^2}\int_{\R^3\times\R^3} 
     K_\eps(x-y)u(x)\cdot u(y) \, \d x \, \d y 
     + \frac{1}{\eps^2} \int_\Omega\psi_s(u(x)) \, \d x
     + C_\eps,
 \end{split}
\end{equation}
where~$u\colon\R^3\to\R^m$ is the macroscopic order parameter,
$\Omega\subseteq\R^3$ is a bounded, smooth domain, and
$\psi_s\colon\R^m\to [0, \, +\infty]$ is any convex potential
that satisfies the assumptions~\eqref{hp:Hfirst}--\eqref{hp:Hlast} below
(for instance, the Ball-Majumdar/Katriel potential defined by~\eqref{BallMajumdar}).

\paragraph*{Assumptions on the kernel~$K$.}

Our assumptions on the kernel~$K$ are reminiscent of~\cite{taylor2018oseen}.
We define~$g(z) := \lambda_{\min}(K(z))$ for any~$z\in\R^3$,
where~$\lambda_{\min}(K)$ denotes the minimum eigenvalue of~$K$.

\begin{enumerate}[label=(K\textsubscript{\arabic*}), 
ref=K\textsubscript{\arabic*}]
 \item \label{hp:K_decay} \label{hp:Kfirst}
  $K\in W^{1,1}(\R^3, \, \text{Sym}(m))$.
 
 \item \label{hp:K_even} $K$ is even, that is $K(z) = K(-z)$ for a.e.~$z\in\R^m$.
 
 \item \label{hp:g} $g\geq 0$ a.e. on~$\R^3$, and there exist 
 positive numbers~$\rho_1 < \rho_2$, $k_*$ such that~$g\geq k_*$
 a.e.~on~$B_{\rho_2}\setminus B_{\rho_1}$.
 
 \item \label{hp:g_decay} $g\in L^1(\R^3)$ and {\BBB there
 exists~$q>7/2$ such that
 $\int_{\R^3} g(z) \abs{z}^q \d z <+\infty$.}
 
 \item \label{hp:lambda_max} There exists a positive constant~$C$ such that 
 $\lambda_{\max}(K(z))\leq Cg(z)$ for a.e.~$z\in\R^3$
 (where~$\lambda_{\max}(K)$ denotes the maximum eigenvalue of~$K$).
 
 \item \label{hp:nabla_K} \label{hp:Klast} {\BBB There
 exists~$\nu > 1$ such that}
 \[
  \int_{\R^3} \norm{\nabla K(z)} \abs{z}^{\nu} \d z <+\infty,
 \]
 where~$\norm{\nabla K(z)}^2 := 
 \partial_\alpha K_{ij}(z) \, \partial_\alpha K_{ij}(z)$.
\end{enumerate}

In the case of physically meaningful systems the tensor $K$ will have to respect frame invariance. In the case of nematic liquid crystals, where the order parameter is a traceless symmetric matrix~$Q$, frame indifference implies that the bilinear form must necessarily be of the form
\begin{equation}
K(z)Q_1\cdot Q_2 = f_1(|z|)Q_1\cdot Q_2 + f_2(|z|)Q_1z\cdot Q_2z + f_3(|z|) (Q_1z\cdot z)(Q_2z\cdot z),
\end{equation}
for all $Q_1,Q_2 \in \text{Sym}_0(3)$, where~$f_1$, $f_2$, $f_3$ are real-valued functions defined on $[0, \, +\infty)$ \cite{taylor2018oseen}. It is clear that by appropriate choices of~$f_1$, $f_2$, $f_3$ which are~$C^1$ and with sufficient decay at infinity the previous assumptions can be satisfied. This family of bilinear forms includes the simplified interaction energy 
\begin{equation}
K(z)Q_1\cdot Q_2=\varphi(|z|)Q_1\cdot Q_2
\end{equation}
{\BBB for a suitable function $\varphi$, which includes the results of~\cite{liu2017oseen,liu2018small}, where $\varphi$ is taken to be rapidly decaying and $C^\infty$, which are stronger assumptions than we shall consider. Furthermore ~\cite[Equation (3.43)]{bowick2017mathematics} considers $K$ to have the same structure, albeit with a slower decay of $\varphi$ than our analysis would permit.} 

{\BBB~We remark that the integrability requirements in \eqref{hp:g_decay} and \eqref{hp:nabla_K} and regularity requirement in \eqref{hp:K_decay}, although weaker than the assumptions in the earlier work \cite{liu2017oseen}, are stronger than that of \cite{taylor2018oseen}, and permit more delicate control of integral estimates needed to show convergence of minimisers in a stronger sense.
(See also Remark~\ref{rk:assumptions} below.)}

\paragraph*{Assumptions on the singular potential~$\psi_s$.}

\begin{enumerate}[label=(H\textsubscript{\arabic*}), ref=H\textsubscript{\arabic*}]
 \item \label{hp:psi_s} \label{hp:Hfirst}
 $\psi_s\colon\R^m\to {\BBB (-\infty, \, +\infty]}$ is a convex function.
 
 \item \label{hp:Q} The domain of~$\psi_s$, 
 $\QQ := \psi_s^{-1}({\BBB\R})\subseteq\R^m$,
 is a non-empty, bounded open set {\BBB that contains~$0$}
 and~$\psi_s\in C^2(\QQ)$.
 
 \item \label{hp:unif_conv} There exists a constant~$c>0$ such that
 $\nabla^2\psi_s(y)\chi\cdot\chi\geq c\abs{\chi}^2$ for any~$y\in\QQ$ and any~$\chi\in\R^m$.
 
 \item \label{hp:blow-up} There holds $\psi_s(y)\to+\infty$
 as~$\dist(y, \, \partial\QQ)\to 0$.
\end{enumerate}

We define the ``bulk potential''~$\psi_b\colon\QQ\to\R$
in terms of~$K$ and~$\psi_s$, as
\begin{equation} \label{psib}
 \psi_b(y) := \psi_s(y) -
   \frac{1}{2}\left(\int_{\R^3} K(z)\, \d z\right)y\cdot y + c_0
   \quad \textrm{for any } y\in\QQ,
\end{equation}
where~$c_0\in\R$ is a constant, uniquely determined by
imposing that $\inf\psi_b = 0$.
We make the following assumptions on~$\psi_b$:

\begin{enumerate}[label=(H\textsubscript{\arabic*}), ref=H\textsubscript{\arabic*}, resume]
 \item \label{hp:NN} The set $\NN := \psi_b^{-1}(0)\subseteq\QQ$ 
 is a compact, smooth, connected manifold without boundary.
 
 \item \label{hp:non_degeneracy} \label{hp:Hlast}
 For any~$y\in\NN$ and any unit vector~$\xi\in\S^{m-1}$
 that is orthogonal to~$\NN$ at~$y$, we have $\nabla^2\psi_b(y)\xi\cdot\xi>0$.
\end{enumerate}

\begin{remark} \label{rk:trivial}
 If the norm of~$\int_{\R^3}K(z)\, \d z$ is smaller than the constant~$c$
 given by~\eqref{hp:unif_conv}, then the function~$\psi_b$
 is {\BBB strictly} convex
 and hence, its zero-set~$\NN$ reduces to a point.
 This happens, for example, in the sufficiently 
 high temperature regime, independently of the precise form of~$K$.
 Nevertheless, our arguments remain valid in this case, too.
\end{remark}

\begin{remark} \label{rk:BallMajumdar}
 The Ball-Majumdar/Katriel potential, defined by~\eqref{BallMajumdar},
 satisfies the conditions~\eqref{hp:Hfirst}--\eqref{hp:Hlast}.
 \eqref{hp:psi_s},~\eqref{hp:Q},~\eqref{hp:blow-up}, and~\eqref{hp:NN} follow from \cite{ball2010nematic}, apart from the $C^2$ smoothness of $\psi_s$ which is implicitly proven in \cite{katriel1986free} via an inverse function theorem argument, although not stated. \eqref{hp:unif_conv} is proven in \cite{taylor2016maximum}. 
 With this choice of the potential, the set~$\NN :=\psi_b^{-1}(0)$
 is either a point or the manifold given by~\eqref{NN} 
 (see~\cite[Section~4]{ball2010nematic}).
 In both cases, \eqref{hp:non_degeneracy} is satisfied
 (see~\cite[Proposition 4.2]{li2015local}).
\end{remark}

\paragraph*{The admissible class and an equivalent expression for the free energy.}

We complement the minimisation of the functional~\eqref{energy}
by prescribing~$u$ on~$\R^3\setminus\Omega$. We take
a map~$u_{\mathrm{bd}}\in H^1(\R^3, \, \R^m)$ such that
\begin{equation} \label{hp:bd} \tag{BD}
 u_{\mathrm{bd}}(x)\in\QQ \quad \textrm{for a.e. } x\in\R^3\setminus\Omega, 
 \qquad u_{\mathrm{bd}}(x)\in\NN \quad \textrm{for a.e. } x\in\Omega, 
\end{equation}
and we define the admissible class
\begin{equation} \label{A}
 \mathscr{A} := \left\{u\in L^\infty(\R^3, \, \QQ)\colon
 \psi_s(u)\in L^1(\Omega), \ 
 u = u_{\mathrm{bd}} \textrm{ a.e. on } \R^3\setminus\Omega\right\} \!.
\end{equation}

\begin{remark}
 {\BBB In order for the assumption~\eqref{hp:bd} to be satisfied,
 it is necessary that the trace of~$u_{\mathrm{bd}}$ on~$\partial\Omega$
 takes its values in the manifold~$\NN$. If~$\NN$ is simply connected,
 then any boundary value that belongs to~$H^{1/2}(\partial\Omega, \, \NN)$
 admits an extension in~$H^1(\Omega, \, \NN)$; 
 this follows from~\cite[Theorem~6.2]{HardtLin}. However, 
 when~$\NN$ is multiply connected (for instance, when~$\NN$ is
 the real projective plane, as in the applications to liquid crystals) 
 there exist boundary values in~$H^{1/2}(\partial\Omega, \, \NN)$ 
 that do not have any extension in~$H^1(\Omega, \, \NN)$.
 (See e.g.~\cite{bethuel2014, MironescuVanSchaftingen-trace}
 for results on the extension problem for manifold-valued Sobolev maps.)
 On the other hand, $\QQ$ is a convex set that contains~$\NN$ and~$0$,
 so any boundary value in~$H^{1/2}(\partial\Omega, \, \NN)$
 has an extension in~$H^1(\R^3\setminus\Omega, \, \QQ)$.}
\end{remark}

In the class~$\mathscr{A}$, the functional~$E_\eps$
has an alterative expression.
For any~$y\in\R^m$, we use the abbreviated 
notation~$y^{\otimes 2} := y\otimes y$. 
We choose
\begin{equation} \label{C_eps}
 C_\eps :=
 \frac{c_0}{\eps^2} \abs{\Omega}
 + \frac{1}{2\eps^2} \int_{\R^3\setminus\Omega}
 \left(\int_{\R^3}K(z)\,\d z\right)
 \cdot u_{\mathrm{bd}}(x)^{\otimes 2} \,\d x,
\end{equation}
where~$\abs{\Omega}$ denotes the volume of~$\Omega$
and~$c_0\in\R$ is the same number as in~\eqref{psib}.
The constant~$C_\eps$ only depends on~$\eps$,
$\Omega$, $K$ and~$u_{\mathrm{bd}}$, 
so it is does not affect minimisers of the functional.
By applying the algebraic identity
\[
 -2K(x-y)u(x)\cdot u(y) 
 = K(x-y)\cdot(u(x) - u(y))^{\otimes 2}
 - K(x-y)\cdot u(x)^{\otimes 2} - K(x-y)\cdot u(y)^{\otimes 2}
\]
and using~\eqref{psib}, \eqref{C_eps},
we re-write~\eqref{energy} as
\begin{equation} \label{energy2}
 \begin{split}
  E_\eps(u) = \frac{1}{4\eps^2}\int_{\R^3\times\R^3} K_\eps(x-y)
     \cdot \left(u(x) - u(y)\right)^{\otimes 2} \, \d x \, \d y  
     +\frac{1}{\eps^2} \int_\Omega\psi_b(u(x)) \, \d x
 \end{split}
\end{equation}
for any~$u\in\mathscr{A}$.
We note that the free energy admits parallels to the Landau-de Gennes energy, with the right-hand term being a corresponding bulk energy and the left-hand term acting as a non-local analogue of the elastic energy, which we shall see is reclaimed in a precise way in the asymptotic limit as~$\eps\to 0$.
Let~$L$ be the unique symmetric fourth-order tensor that satisfies
\begin{equation} \label{L}
 L\xi\cdot \xi := \frac{1}{4}\int_{\R^3} K(z) \cdot (\xi z)^{\otimes 2}\, \d z
 \qquad \textrm{for any } \xi\in\R^{m\times 3}.
\end{equation}
{\BBB The right-hand side of~\eqref{L} is well-defined and finite 
for any~$\xi\in\R^{m\times 3}$ because~$K$ has finite second moment,
thanks to the assumption~\eqref{hp:g_decay}.}
Coordinate-wise, $L$ is defined by
\[
 L_{ij\alpha\beta} = \frac{1}{4} \int_{\R^3} K_{\alpha\beta}(z)\,z_i\,z_j\,\d z
\]
for any~$i$, $j\in\{1, \, 2, \, 3\}$
and~$\alpha$, $\beta\in\{1, 2, \, \ldots, \, m\}$.
Let~$E_0\colon\mathscr{A}\to[0, \, +\infty]$ be given as
\begin{equation} \label{limit}
 E_0(u) := \begin{cases}
            \displaystyle\int_{\Omega} L\nabla u \cdot \nabla u
              & \textrm{if } u\in H^1(\Omega, \, \NN)\cap\mathscr{A} \\
            +\infty &\textrm{otherwise.}
           \end{cases}
\end{equation}
By assumption~\eqref{hp:bd}, the set~$H^1(\Omega, \, \NN)\cap\mathscr{A}$
is non-empty and hence, the functional~$E_0$ is not identically equal to~$+\infty$.
Taylor~\cite{taylor2018oseen} proved that, 
as~$\eps\to 0$, the functional~$E_\eps$ $\Gamma$-converges to~$E_0$
with respect to the $L^2$-topology.
In particular, up to subsequences, minimisers~$u_\eps$ 
of~$E_\eps$ in the class~$\mathscr{A}$
converge $L^2$-strongly to a minimiser $u_0$ of~$E_0$ in~$\mathscr{A}$.
Our aim is to prove a convergence result for minimisers, in a stronger topology.

\paragraph*{Main results.}

Given a Borel set~$G\subseteq\R^3$
and~$u\in L^\infty(G, \, \QQ)$, we define 
\begin{equation} \label{loc_energy}
 F_\eps(u, \, G) := \frac{1}{4\eps^2}\int_{G\times G} 
     K_\eps(x-y) \cdot \left(u(x) - u(y)\right)^{\otimes 2} \, \d x \, \d y  
     +\frac{1}{\eps^2} \int_{G} \psi_b(u(x)) \, \d x.
\end{equation}
For any~$\mu\in (0, \, 1)$, we denote the $\mu$-H\"older
semi-norm of~$u$ on~$G$ as
\[
 [u]_{C^\mu(G)} := \sup_{x, \, y\in G, \ x\neq y} 
 \frac{\abs{u(x) - u(y)}}{\abs{x-y}^\mu}
\]

\begin{mainthm}[Uniform~$\eta$-regularity] \label{th:Holder}
 Assume that~\eqref{hp:Kfirst}--\eqref{hp:Klast},
 \eqref{hp:Hfirst}--\eqref{hp:Hlast} and~\eqref{hp:bd} are satisfied. 
 Then, there exist positive
 numbers~$\eta$, $\eps_*$, $M$ and~$\mu\in (0, \, 1)$ such that
 for any ball~$B_{r_0}(x_0)\subseteq\Omega$, 
 any~$\eps\in (0, \, \eps_* r_0)$,
 and any minimiser~$u_\eps$ of~$E_\eps$ in~$\mathscr{A}$ such that
 \[
  r_0^{-1} F_\eps(u_\eps, \, B_{r_0}(x_0)) \leq \eta^2
 \]
 there holds
 \[
  r_0^{\mu} \, [u_\eps]_{C^\mu(B_{r_0/2}(x_0))} \leq M.
 \]
\end{mainthm}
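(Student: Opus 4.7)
The plan is to prove a Campanato-type decay estimate for the local energy $F_\eps(u_\eps,\cdot)$ on concentric balls and then translate polynomial energy decay into a pointwise H\"older modulus. Concretely, the target is to find constants $\theta\in(0,1/2)$, $\mu\in(0,1)$ and $\eta_0>0$ such that for every ball $B_r(x)\subseteq\Omega$ with $\eps\le \eps_\ast r$,
\[
 r^{-1}F_\eps(u_\eps,B_r(x))\le \eta_0
 \ \Longrightarrow\
 (\theta r)^{-1}F_\eps(u_\eps,B_{\theta r}(x))\le \theta^{2\mu}\,r^{-1}F_\eps(u_\eps,B_r(x)).
\]
Iterating on dyadic scales $r_n=\theta^n r_0$ down to the ``fundamental scale'' $r\sim\eps/\eps_\ast$ then yields a Morrey-type bound $r^{-1-2\mu}F_\eps(u_\eps,B_r(x))\le C$. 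A nonlocal integral characterisation of H\"older continuity, which uses the coercivity~\eqref{hp:g} of $K$ on the annulus $B_{\rho_2}\setminus B_{\rho_1}$ to bound pointwise differences $\abs{u_\eps(x)-u_\eps(y)}$ by the nonlocal Dirichlet form, then delivers the claimed $C^\mu$ bound on $B_{r_0/2}(x_0)$.

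The decay step I would prove by a blow-up and compactness argument. Suppose by contradiction that there are sequences $\eps_k\to 0$, $r_k>0$ with $\delta_k:=\eps_k/r_k\to 0$, points $x_k\in\Omega$, and minimisers $u_k$ of $E_{\eps_k}$ for which $v_k(y):=u_k(x_k+r_k y)$ satisfies $F_{\delta_k}(v_k,B_1)\le \eta_0$ but violates the decay on $B_\theta$ for every admissible $\theta$. Smallness of the bulk term $\delta_k^{-2}\int_{B_1}\psi_b(v_k)$ combined with~\eqref{hp:non_degeneracy} forces $\dist(v_k,\NN)\to 0$ in $L^2(B_1)$; composing with the nearest-point projection onto $\NN$ in a tubular neighbourhood and extracting a subsequence gives $v_k\to v_\infty$ in $L^2(B_1,\NN)$. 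Taylor's $\Gamma$-convergence~\cite{taylor2018oseen} supplies $v_\infty\in H^1(B_1,\NN)$ together with the lim-inf inequality against $E_0$. A cut-and-paste argument, inserting $v_\infty$ in a slightly smaller ball and interpolating to $v_k$ across a thin annulus, combined with minimality of $v_k$, upgrades this to energy convergence $F_{\delta_k}(v_k,B_\sigma)\to\int_{B_\sigma} L\nabla v_\infty\cdot\nabla v_\infty$ for every $B_\sigma$ compactly contained in $B_1$, so $v_\infty$ is a locally minimising harmonic map for the quadratic form $L$. The $\eta$-regularity theorem for minimising harmonic maps into compact smooth manifolds (Schoen-Uhlenbeck, in the anisotropic version used in~\cite{canevari2019minimizers}), applied to $v_\infty$ with energy at most~$\eta_0$, gives smoothness on $B_{1/2}$ and the decay $\theta^{-1}\int_{B_\theta}L\nabla v_\infty\cdot\nabla v_\infty\le \tfrac{1}{2}\theta^{2\mu}\int_{B_1}L\nabla v_\infty\cdot\nabla v_\infty$ for suitable $\theta$, $\mu$. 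Transferring through the energy convergence contradicts the failure of decay for $v_k$.

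The principal obstacle will be establishing the \emph{strong} energy convergence $F_{\delta_k}(v_k,B_\sigma)\to\int_{B_\sigma}L\nabla v_\infty\cdot\nabla v_\infty$, going beyond the one-sided $\Gamma$-liminf supplied directly by~\cite{taylor2018oseen}. Three ingredients are required: (i)~controlling the nonlocal tail interactions between $B_\sigma$ and its complement, handled by the decay assumptions~\eqref{hp:K_decay}--\eqref{hp:g_decay} on $K$; (ii)~constructing, from $v_\infty$ and the exterior data $v_k|_{\R^3\setminus B_1}$, an admissible competitor for the full functional $E_{\delta_k}$ interpolating across a thin annulus with controlled nonlocal cost, so that the minimality of $v_k$ forces the matching upper bound; and (iii)~uniformly bounding the interpolation error in $k$. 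A further delicate point is the Morrey-to-H\"older step itself: turning polynomial decay of $F_\eps$ on balls of radius $r\ge\eps/\eps_\ast$ into a pointwise modulus demands that oscillations of $u_\eps$ at subscale $\eps$, which are not controlled by $F_\eps$ on a single ball alone, be absorbed using the annular coercivity~\eqref{hp:g}.
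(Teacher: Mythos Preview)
Your blow-up and compactness argument for the decay step is essentially the paper's own route: Lemma~\ref{lemma:decay} is proved exactly by contradiction, rescaling, and invoking a compactness result (Proposition~\ref{prop:compactness}) that delivers strong energy convergence to a minimising $\NN$-valued harmonic map, then transferring the classical $\eta$-decay for harmonic maps back through that convergence. The three ingredients you list for strong convergence --- tail control, a Luckhaus-type interpolation to build competitors, and uniform annular error bounds --- are precisely what the paper assembles in Lemmas~\ref{lemma:interpolation}, \ref{lemma:glue-bis}, \ref{lemma:glue} and the proof of Proposition~\ref{prop:compactness}. So on scales $\rho\gtrsim\eps$ you are aligned with the paper, and iterating the decay plus the nonlocal Poincar\'e inequality (Proposition~\ref{prop:Poincare}) gives the Campanato bound for $\rho\in(\lambda_1\eps,1)$.

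The genuine gap is at subscale~$\eps$. You flag this as ``delicate'' but propose to absorb it using only the annular coercivity~\eqref{hp:g}. That cannot work: for $\rho\ll\eps$ the kernel $K_\eps$ sees almost no mass inside $B_\rho\times B_\rho$, so $F_\eps(u_\eps,B_\rho)$ carries essentially no information about oscillations of $u_\eps$ on $B_\rho$, and the Poincar\'e inequality of Proposition~\ref{prop:Poincare} explicitly requires $\eps\le\eps_1\rho$. The paper closes this gap by a completely different mechanism that you do not mention: the Euler--Lagrange equation $\Lambda(u_\eps)=K_\eps*u_\eps$ (Proposition~\ref{prop:EL}). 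Differentiating gives $\nabla(\Lambda\circ u_\eps)=(\nabla K_\eps)*(u_\eps-m_\eps)$; splitting the convolution at an intermediate scale $\eps^\beta$ and feeding in the Campanato decay already obtained on scales $\gtrsim\eps$ yields a uniform $L^p$ bound on $\nabla(\Lambda\circ u_\eps)$ in $B_{\eps^\beta}$ for some $p>3$ (here~\eqref{hp:nabla_K} is used to control the far piece). The Lipschitz inverse of~$\Lambda$ (Lemma~\ref{lemma:nabla_inv}) transfers this to $\nabla u_\eps$, and Sobolev embedding gives the $C^\mu$ bound on $B_{\eps^\beta}$, covering all radii $\rho\le\lambda_1\eps$. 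Without this PDE input, your scheme produces no pointwise control below scale~$\eps$, and the uniform H\"older bound does not follow.
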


As a corollary, we deduce a convergence result
for minimisers of~$E_\eps$, in the locally uniform topology.
We recall that any minimiser~$u_0$ for the limit
functional~\eqref{limit} in~$\mathscr{A}$ 
is smooth in~$\Omega\setminus S[u_0]$, where 
\begin{equation} \label{singularset}
 S[u_0] := \left\{x\in\Omega\colon 
 \liminf_{\rho\to 0} \rho^{-1}
 \int_{B_\rho(x)}\abs{\nabla u_0}^2 > 0 \right\} \!.
\end{equation}
Moreover, $S[u_0]$ is a closed set of zero total length 
(see e.g.~\cite{HKL, Luckhaus}).

\begin{mainthm} \label{th:conv}
 Assume that the conditions~\eqref{hp:Kfirst}--\eqref{hp:Klast},
 \eqref{hp:Hfirst}--\eqref{hp:Hlast} and~\eqref{hp:bd} are satisfied. 
 Let~$u_\eps$ be a minimiser of~$E_\eps$ in~$\mathscr{A}$.
 Then, up to extraction of a (non-relabelled) subsequence, we have
 \[
  u_\eps \to u_0 \qquad \textrm{locally uniformly in }
  \Omega\setminus S[u_0], 
 \]
 where $u_0$ is a minimiser of the functional~\eqref{limit}
 in~$\mathscr{A}$.
\end{mainthm}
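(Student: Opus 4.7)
The plan is to combine the $L^2$-compactness already established in Taylor's $\Gamma$-convergence result with the uniform $\eta$-regularity of Theorem~\ref{th:Holder}. Up to extracting a (non-relabelled) subsequence, we have $u_\eps\to u_0$ strongly in $L^2(\R^3,\R^m)$ with $u_0$ a minimiser of $E_0$ in~$\mathscr{A}$; by classical regularity for manifold-valued minimising maps $u_0$ is smooth on $\Omega\setminus S[u_0]$. Fix a compact set $K\csubset\Omega\setminus S[u_0]$. The definition~\eqref{singularset} of the singular set combined with compactness of~$K$ supplies some $r_0>0$ such that $B_{r_0}(x_0)\csubset\Omega$ and
$$r_0^{-1}\int_{B_{r_0}(x_0)} L\nabla u_0\cdot\nabla u_0\,\d x \leq \eta/2 \qquad\text{for every } x_0\in K,$$
with $\eta$ as in Theorem~\ref{th:Holder}.

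The critical step is the ``localised energy upper bound''
$$\limsup_{\eps\to 0}\,\sup_{x_0\in K}\, r_0^{-1}F_\eps(u_\eps,B_{r_0}(x_0))\leq\eta.$$
By assumptions~\eqref{hp:g} and~\eqref{hp:lambda_max}, $K(z)$ is positive semidefinite, so the quadratic integrand in~\eqref{loc_energy} is non-negative; this makes $G\mapsto F_\eps(u,G)$ super-additive on disjoint Borel subsets of $\Omega$, while comparison of~\eqref{energy2} with~\eqref{loc_energy} gives $F_\eps(u,\Omega)\leq E_\eps(u)$. Splitting $\Omega$ into $B_{r_0}(x_0)$ and its complement therefore yields
$$F_\eps(u_\eps,B_{r_0}(x_0)) \leq E_\eps(u_\eps) - F_\eps(u_\eps,\Omega\setminus B_{r_0}(x_0)).$$
Taylor's $\Gamma$-convergence result gives $E_\eps(u_\eps)\to E_0(u_0)$, so it remains to derive a localised $\Gamma$-liminf inequality
$$\liminf_{\eps\to 0}F_\eps(u_\eps,\Omega\setminus B_{r_0}(x_0)) \geq \int_{\Omega\setminus B_{r_0}(x_0)} L\nabla u_0\cdot\nabla u_0\,\d x,$$
which combined with the choice of~$r_0$ produces the desired inequality. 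This should follow by adapting Taylor's blow-up lower bound to sub-domains, using the fast decay~\eqref{hp:K_decay}, \eqref{hp:nabla_K} (which makes the effective interaction range $O(\eps)$), so that cross-interactions across $\partial B_{r_0}(x_0)$ become negligible in the limit.

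Once the energy bound is established, Theorem~\ref{th:Holder} applies at every $x_0\in K$ for $\eps<\eps_* r_0$, producing the uniform estimate $r_0^{\mu}[u_\eps]_{C^\mu(B_{r_0/2}(x_0))}\leq M$. A finite cover of~$K$ by balls $B_{r_0/2}(x_j)$ yields uniform equi-Hölder continuity of $\{u_\eps\}$ on~$K$. Since the $u_\eps$ are uniformly bounded in $L^\infty$ (because $\QQ$ is bounded by~\eqref{hp:Q}), Arzelà--Ascoli supplies a further subsequence converging locally uniformly on $\Omega\setminus S[u_0]$, and the limit is identified as~$u_0$ by the earlier $L^2$-convergence. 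The main obstacle is the localised $\liminf$ inequality: a non-local functional does not cleanly restrict to sub-domains, and a careful quantitative estimate of the cross-interactions---plausibly via uniform control of $\eps^{-2}\!\int\!\int_{(\Omega\setminus B_{r_0})\times B_{r_0}} K_\eps(x-y)|u_\eps(x)-u_\eps(y)|^2\,\d x\,\d y$---will be required.
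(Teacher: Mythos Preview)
Your approach is correct but takes a different route from the paper at the key step. Both arguments share the same skeleton---extract $L^2$-convergence to a minimiser~$u_0$, establish the local energy bound $r_0^{-1}F_\eps(u_\eps,B_{r_0}(x_0))\leq\eta$, apply Theorem~\ref{th:Holder}, and finish via Arzel\`a--Ascoli---but differ in how the local energy bound is obtained. The paper simply invokes Proposition~\ref{prop:compactness} (already needed, via the decay lemma, in the proof of Theorem~\ref{th:Holder}): that proposition, proved through a Luckhaus-type interpolation construction, gives the \emph{exact} limit $F_\eps(u_\eps,B_s(x_0))\to\int_{B_s(x_0)}L\nabla u_0\cdot\nabla u_0$. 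You instead combine super-additivity of $G\mapsto F_\eps(u,G)$, the global convergence $E_\eps(u_\eps)\to E_0(u_0)$, and a localised $\liminf$ on the complement $\Omega\setminus B_{r_0}(x_0)$ to get only the \emph{upper} bound on $\limsup F_\eps(u_\eps,B_{r_0}(x_0))$, which is all Theorem~\ref{th:Holder} requires. This is a legitimate and more elementary alternative for this step, at the price of not yielding the sharp energy convergence that Proposition~\ref{prop:compactness} provides.

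Two corrections. First, the localised $\liminf$ you flag as ``the main obstacle'' is in fact Proposition~\ref{prop:Gamma-liminf} of the paper, applied with any large ball containing~$\Omega$ (the hypothesis is met since $E_\eps(u_\eps)\leq C$ and $K\geq 0$). No cross-interaction estimate is needed there: restricting the double integral to $G\times G$ only decreases it, so the off-diagonal block $(\Omega\setminus B_{r_0})\times B_{r_0}$ plays no role in a $\liminf$ bound---your worry about it is misplaced. Second, the uniform statement $\limsup_{\eps\to 0}\sup_{x_0\in K}(\cdot)\leq\eta$ is stronger than your argument delivers or needs; it suffices to run the bound at finitely many centres of a cover of~$K$ and take a common $\eps$-threshold.
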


The strategy of the proof for Theorem~\ref{th:Holder} 
is inspired by~\cite{contreras2018singular}.
Under the assumption~$F_\eps(u_\eps, \, B_1)\leq\eta$, we obtain
an algebraic decay for the mean oscillation of~$u_\eps$, that is
\begin{equation} \label{Campanato}
 \fint_{B_\rho} \abs{u_\eps - \fint_{B_\rho} u_\eps}^2 \leq C\rho^{2\mu}
\end{equation}
for any~$\rho\in (0, \, 1)$ and some positive constants~$C$, $\mu$
that do not depend on~$\rho$, $\eps$.
If the radius~$\rho$ is large enough, 
{\BBB i.e.~$\rho\geq\eps^\gamma$ for some suitable~$\gamma\in (0, \, 1)$,
we obtain an algebraic decay for~$F_\eps(u_\eps, \, B_\rho)$
as a function of~$\rho$ by adapting
analogous arguments for the limit functional~$E_0$
(cf.~Luckhaus' partial regularity results in~\cite{Luckhaus});} then,
we deduce~\eqref{Campanato} via a suitable Poincar\'e inequality
(Proposition~\ref{prop:Poincare}). On the other hand, 
{\BBB if~$\rho\leq\eps^\gamma$} we obtain~\eqref{Campanato} from
the Euler-Lagrange equations for~$E_\eps$ (Proposition~\ref{prop:EL}).
The inequality~\eqref{Campanato} immediately implies the desired 
bound on the H\"older norm of~$u_\eps$, by Campanato embedding.
Once Theorem~\ref{th:Holder} is proven, Theorem~\ref{th:conv}
follows, via the Ascoli-Arzel\`a theorem.

\begin{remark} \label{rk:assumptions}
 {\BBB As we observed before, if we are interested in weaker 
 modes of convergence for the minimisers (e.g., $L^2$-convergence),
 then we may replace~\eqref{hp:g_decay}
 and~\eqref{hp:nabla_K} with the weaker condition
 that~$g\in L^1(\R^3)$ and~$g$ has finite second moment,
 as in~\cite{taylor2018oseen}.
 However, \eqref{hp:g_decay} and~\eqref{hp:nabla_K}
 play a very important r\^ole for us;
 both of them are used in the proof of the estimate~\eqref{Campanato}
 for small radii, $\rho\leq\eps^\gamma$. We do not know whether 
 Theorems~\ref{th:Holder} and~\ref{th:conv} remain true under 
 weaker assumptions.}
\end{remark}

\section{Preliminary results}

\subsection{The Euler-Lagrange equations}

Throughout the paper, we denote by~$C$
several constants that depend only on~$\Omega$, $K$, $m$,
$\psi_s$ and~$u_{\mathrm{bd}}$. We write~$A\lesssim B$ 
as a short-hand for~$A\leq CB$. We also define
$g_\eps(z) := \eps^{-3}g(\eps^{-1}z)$ for~$z\in\R^3$
(where, we recall, $g(z)$ is the 
minimum eigenvalue of~$K(z)$) and
\begin{equation} \label{Lambda}
 \Lambda:= \nabla\psi_s\colon\QQ\to\R^m.
\end{equation}

\begin{proposition} \label{prop:EL}
Consider the free energy~$E_\eps$, given by~\eqref{energy}, 
with $u=u_{\mathrm{bd}}$ on $\mathbb{R}^3\setminus \Omega$.
Then there exists a minimiser~$u_\eps\in L^\infty(\Omega, \, \QQ)$
(identified with its extension by~$u_{\mathrm{bd}}$ to~$\mathbb{R}^3$), and it satisfies the Euler-Lagrange equation, 
\begin{equation} \label{EL}
 \Lambda(u_\eps(x)) = \int_{\mathbb{R}^3} K_\eps(x-y)u_\eps(y)\,\d y
\end{equation}
for a.e.~$x\in\Omega$. 
\end{proposition}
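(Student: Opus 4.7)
I would split the proof into existence via the direct method and derivation of~\eqref{EL} via convex-combination variations, the latter being the less routine step.

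\textbf{Existence.} The admissible class $\mathscr{A}$ is non-empty because $u_{\mathrm{bd}}$ itself lies in it: by~\eqref{hp:bd}, $u_{\mathrm{bd}}(x)\in\NN\subseteq\QQ$ a.e.\ on $\Omega$, and the $C^2$-regularity of $\psi_s$ on $\QQ$ from~\eqref{hp:Q} together with compactness of $\NN$ yield $\psi_s(u_{\mathrm{bd}})\in L^\infty(\Omega)$. Using the equivalent form~\eqref{energy2}, the functional is non-negative: $K$ is positive semi-definite a.e.\ by~\eqref{hp:g}, and $\psi_b\geq 0$ by the choice of $c_0$ in~\eqref{psib}. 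Any minimising sequence $\{u_n\}\subset\mathscr{A}$ is uniformly bounded in $L^\infty(\R^3;\R^m)$ since $\QQ$ is bounded. Extract a weak-$\ast$ subsequence $u_n\rightharpoonup^\ast u_\eps$. Because $K_\eps(x-\cdot)\in L^1$ and $\|u_n\|_{L^\infty}$ is uniformly bounded, $K_\eps\ast u_n\to K_\eps\ast u_\eps$ pointwise with uniform bound, so the interaction term passes to the limit by dominated convergence, while the bulk term $\int_\Omega\psi_s(u)\,\d x$ is sequentially weak-$\ast$ lower semicontinuous by convexity of $\psi_s$. Hence $u_\eps\in L^\infty(\Omega,\QQ)$ is a minimiser.

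\textbf{From variational inequality to~\eqref{EL}.} Arbitrary additive perturbations $u_\eps+t\varphi$ need not be admissible because $u_\eps$ could approach $\partial\QQ$. Instead, for any competitor $v\in\mathscr{A}$ and $t\in[0,1]$, the convex combination $u^t:=(1-t)u_\eps+tv$ lies in $\mathscr{A}$: convexity of $\QQ$ gives $u^t(x)\in\QQ$ a.e., and the convex inequality $\psi_s(u^t)\leq(1-t)\psi_s(u_\eps)+t\psi_s(v)$ yields integrability. Minimality forces $t^{-1}[E_\eps(u^t)-E_\eps(u_\eps)]\geq 0$. The interaction contribution is quadratic in $t$ and differentiates directly, converging to $-\eps^{-2}\int_\Omega(K_\eps\ast u_\eps)\cdot(v-u_\eps)\,\d x$. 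For the bulk part, the difference quotient
\[
 h_t(x):=\frac{\psi_s(u^t(x))-\psi_s(u_\eps(x))}{t}
\]
is non-decreasing in $t>0$ by convexity of $\psi_s$, dominated above by the $L^1$-function $\psi_s(v)-\psi_s(u_\eps)$, and since $\QQ$ is open we have $u_\eps(x)\in\QQ$ a.e., so $\psi_s$ is differentiable at $u_\eps(x)$ and $h_t(x)\searrow\Lambda(u_\eps(x))\cdot(v(x)-u_\eps(x))$. Monotone convergence and the finiteness of the limiting interaction term force $\Lambda(u_\eps)\cdot(v-u_\eps)\in L^1(\Omega)$ and give the variational inequality
\[
 \int_\Omega\big[\Lambda(u_\eps)-K_\eps\ast u_\eps\big]\cdot(v-u_\eps)\,\d x\geq 0
 \qquad\forall v\in\mathscr{A}.
\]

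\textbf{Pointwise equation and main obstacle.} To extract~\eqref{EL}, fix $y_0\in\QQ$ and any measurable $E\subseteq\Omega$ of small positive measure and test with $v:=y_0\,\one_E+u_\eps\,\one_{\Omega\setminus E}\in\mathscr{A}$. Normalising by $|E|$ and shrinking $E$ to a common Lebesgue point $x_0\in\Omega$ of $u_\eps$ and $\Lambda(u_\eps)$ (noting that $K_\eps\ast u_\eps$ is continuous, as $K_\eps\in L^1$ and $u_\eps\in L^\infty$) yields
\[
 \big(\Lambda(u_\eps(x_0))-(K_\eps\ast u_\eps)(x_0)\big)\cdot(y_0-u_\eps(x_0))\geq 0
 \qquad\forall y_0\in\QQ.
\]
Since $\QQ$ is open and contains $u_\eps(x_0)$, the vector $y_0-u_\eps(x_0)$ sweeps a neighbourhood of $0\in\R^m$, forcing the bracket to vanish and giving~\eqref{EL} a.e. The only genuinely delicate point in this argument is the absence of an a priori bound separating $u_\eps$ from $\partial\QQ$, where $\Lambda$ may blow up; the convex-combination device confines all competitors to the convex interior $\QQ$, and the monotonicity of convex difference quotients licenses the limit passage, so no independent maximum-principle argument is needed.
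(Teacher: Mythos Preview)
Your existence argument via the direct method aligns with the paper's, which also argues weak-$*$ continuity of the bilinear term (phrased there via compactness of the map $u\mapsto K_\eps*(u\chi_\Omega)$ from $L^\infty(\Omega)$ to $L^1(\Omega)$; this is equivalent to your pointwise-plus-DCT route on the bounded domain~$\Omega$).

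For the Euler--Lagrange equation your route is genuinely different. The paper exhausts $\Omega$ up to a null set by the sublevel sets $U_\delta=\{x:\psi_s(u_\eps(x))<1/\delta\}$, uses~\eqref{hp:blow-up} to see that $u_\eps$ is uniformly bounded away from $\partial\QQ$ on each $U_\delta$, and then takes \emph{additive} perturbations $u_\eps+\eta\phi$ with $\phi\in L^\infty$ supported in $U_\delta$; since $\psi_s$ is $C^2$ on a neighbourhood of $u_\eps(U_\delta)$, ordinary differentiation yields~\eqref{EL} on $U_\delta$, hence a.e. Your convex-combination device bypasses the exhaustion and the appeal to~\eqref{hp:blow-up} entirely: it uses only convexity and differentiability of $\psi_s$ on the open set~$\QQ$, trading directness for a variational-inequality detour. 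One small gap remains: to speak of Lebesgue points of $\Lambda(u_\eps)$ you need $\Lambda(u_\eps)\in L^1_{\mathrm{loc}}(\Omega)$, which is not immediate from $u_\eps\in\mathscr{A}$. It does, however, follow from your own variational inequality by testing with $m+1$ affinely independent constants $y_0,\ldots,y_m\in\QQ$, since then $\Lambda(u_\eps)\cdot(y_i-y_0)\in L^1(\Omega)$ for each~$i$ and the differences $y_i-y_0$ span~$\R^m$; with that line added your argument is complete.
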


\begin{proof}
By neglecting the additive constant in~\eqref{energy},
and multiplying by~$\eps^2$,
without loss of generality we may consider the functional
\[
 \mathcal{F}(u) := \int_\Omega \psi_s(u(x))\,\d x
 - \int_{\mathbb{R}^3}\int_{\mathbb{R}^3} K_\eps(x-y)u(x)\cdot u(y)\,\d x\,\d y
\] 
instead of~$E_\eps$.
To show existence, we use a direct method argument.
First we show that the bilinear form admits a global lower bound. 
As $u_{\mathrm{bd}}\in L^2(\mathbb{R}^3, \, \QQ)$
and~$u$ admits uniform~$L^\infty$-bounds on~$\Omega$, we have that $u\in L^2(\mathbb{R}^3, \, \overline{\QQ})$, 
$\norm{u}_{L^2(\R^3)}$ is bounded uniformly. We thus have the estimate that 
\begin{equation}
 \begin{split}
 \int_{\mathbb{R}^3}\int_{\mathbb{R}^3} &|K_\eps(x-y)u(x)\cdot u(y)|\,\d x\,\d y\\
 &\lesssim \int_{\mathbb{R}^3}\int_{\mathbb{R}^3} g_\eps(x-y)|u(x)||u(y)|\,\d x\,\d y\\
 &=  \int_{\mathbb{R}^3}\int_{\mathbb{R}^3}
  \left(g_\eps(x-y)^{\frac{1}{2}}|u(x)|\right)
  \left(g_\eps(x-y)^{\frac{1}{2}}|u(y)|\right)\,\d x\,\d y\\
 &\lesssim \left(\int_{\mathbb{R}^3}\int_{\mathbb{R}^3}g_\eps(x-y)|u(x)|^2\,\d x\,\d y\right)^\frac{1}{2}\left(\int_{\mathbb{R}^3}\int_{\mathbb{R}^3}g_\eps(x-y)|u(y)|^2\,\d x\,\d y\right)^\frac{1}{2}\\
 &= \norm{g_\eps}_{L^1(\R^3)}\norm{u}_{L^2(\R^3)}^2
  = \norm{g}_{L^1(\R^3)}\norm{u}_{L^2(\R^3)}^2
\end{split}
\end{equation}
The singular function~$\psi_s$ admits a lower bound pointwise,
hence the functional~$\mathcal{F}$ admits a global lower bound. To show the admissible set is non empty, simply take $u(x)=u_0\in \QQ$ for all $x\in \Omega$, so that $\psi_s(u(x))$ is a non-infinite constant.

The uniform $L^\infty$ bounds on $u$ imply that we have $L^\infty$ weak-* compactness  of a minimising sequence. As~$\psi_s$ is strictly convex, we have weak-* lower semicontinuity of the entropic term. It suffices to show weak-* lower semicontinuity of the bilinear term. First we split the bilinear term into the ``boundary'' and ``bulk'' contributions. That is, we write 
$u=u_{\mathrm{bd}}\,\chi_{\mathbb{R}^3\setminus\Omega} + u\,\chi_{\Omega}$,
where~$\chi_{\R^3\setminus\Omega}$ and~$\chi_\Omega$ are the characteristic functions of~$\R^3\setminus\Omega$ and~$\Omega$ respectively. As $K_\eps*(u_{\mathrm{bd}} \, \chi_{\mathbb{R}^3\setminus\Omega})\in L^1(\Omega)$, 
if $u_j\overset{*}{\rightharpoonup}u$, 
\begin{equation}
 \int_\Omega u_j(x) \, K_\eps*(u_{\mathrm{bd}} \, \chi_{\mathbb{R}^3\setminus\Omega})(x)\,\d x
 \to \int_\Omega u(x) \, K_\eps*(u_{\mathrm{bd}} \, \chi_{\mathbb{R}^3\setminus\Omega})(x)\,\d x. 
\end{equation}
The second term requires a little more care. 
Following~\cite[Corollary 4.1]{eveson1995compactness}, 
the map $L^\infty(\Omega)\ni u\mapsto K_\eps*(u\,\chi_{\Omega})$ is $L^\infty$-to-$L^1$ compact 
if and only if the set $\left\{K_\eps(x-\cdot)\,\chi_\Omega\colon x\in\Omega\right\}$ is relatively $L^1$-compact. This is immediate however as $\Omega$ is a bounded set and $K_\eps$ is integrable. Therefore the map 
\begin{equation}
u\mapsto \int_\Omega\int_\Omega K_\eps(x-y)u(x)\cdot u(y)\,\d x\,\d y
\end{equation}
is in fact continuous with the weak-* $L^\infty$ topology, and therefore the entire bilinear term is continuous also. Therefore the energy functional is lower semicontinuous and minimisers exist by the direct method. 

To show that minimisers satisfy the Euler-Lagrange equation, we note that if $u$ has finite energy, then the measure of the set $\{x\in\Omega : u(x)\in\partial\QQ\}$ is zero. In particular, we may define $U_\delta=\left\{x\in\Omega :\psi_s(u(x))< 1/\delta\right\}$, and we have that 
\begin{equation}
 \Omega = \Gamma\cup \bigcup\limits_{\delta>0}U_\delta,
\end{equation}
where $\Gamma$ is a null set. 
By Assumption~\eqref{hp:blow-up}, for every $\delta>0$,
there exists some~$\gamma>0$ so that if
$\psi_s(\tilde{u})< 1/\delta$, then~$\dist(\tilde{u}, \, \partial\QQ)>\gamma$. 
In particular, for $\phi\in L^\infty(\mathbb{R}^3, \, \mathbb{R}^m)$ supported on~$U_\delta$
and~$\eta$ sufficiently small, $u+\eta\phi$ is bounded away from~$\partial\QQ$ on~$U_\delta$.
Therefore we may take variations without issue, as 
\begin{equation*}
 \begin{split}
 \frac{1}{\eta}\left(\mathcal{F}(u+\eta\phi)-\mathcal{F}(u)\right)
 &= \int_{U_\delta}\frac{1}{\eta}\left(\psi_s(u(x)+\eta\phi(x))-\psi_s(u(x))\right)\,\d x \\
 & -\frac{1}{2}\int_{\mathbb{R}^3}\int_{\mathbb{R}^3}K_\eps(x-y)\cdot\left(2\phi(x)\otimes u(y)+\eta\phi(x)\otimes \phi(y)\right)\,\d x\,\d y.
 \end{split}
\end{equation*}
Now we have no issue taking the limit as~$\eta\to 0$,
as~$\psi_s$ is~$C^2$ away from $\partial\QQ$, to give 
\begin{equation*}
 \begin{split}
 \lim \limits_{\eta\to 0}\frac{1}{\eta}\left(\mathcal{F}(u+\eta\phi)-\mathcal{F}(u)\right)
 =&\int_{U_\delta}\Lambda(u(x))\cdot \phi(x)\,\d x 
  - \int_{\mathbb{R}^3}\int_{\mathbb{R}^3}K_\eps(x-y)u(y)\,\d y\cdot\phi(x)\,\d x\\
 =&\int_{U_\delta}\left(\Lambda(u(x))-\int_{\mathbb{R}^3} K_\eps(x-y)u(y)\,\d y\right)\cdot \phi(x)\,\d x,
\end{split}
\end{equation*}
recalling that $\phi(x)=0$ outside of $U_\delta$. As $\phi$ was arbitrary, this implies that $u$ satisfies 
\begin{equation}
\Lambda(u(x))=\int_{\mathbb{R}^3} K_\eps(x-y)u(y)\,\d y
\end{equation}
on~$U_\delta$, and since $\delta$ was arbitrary, this implies that $u$ satisfies the Euler-Lagrange equation outside of $\Gamma$, which is of measure zero. 
\end{proof}

The Euler-Lagrange equations are particularly useful
when used in combination with the following property.

\begin{lemma} \label{lemma:nabla_inv}
 The map~$\Lambda\colon\QQ\to\R^m$ is invertible
 and its inverse is of class~$C^1$. Moreover,
 \begin{equation} \label{nabla_inv}
  \sup_{z\in\R^m} \norm{\nabla(\Lambda^{-1})(z)} \leq c^{-1},
 \end{equation}
 where~$c$ is the constant given by~\eqref{hp:unif_conv}, and
 \begin{equation} \label{Lambda_blowup}
  \abs{\Lambda(y)}\to +\infty \qquad 
  \textrm{as } \dist(y, \, \partial\QQ)\to 0.
 \end{equation}
\end{lemma}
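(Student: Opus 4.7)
The plan is to establish the four claims (injectivity, blow-up, surjectivity, and $C^1$ inverse with the quantitative bound) in turn, all essentially as consequences of uniform convexity~\eqref{hp:unif_conv} and the boundary blow-up~\eqref{hp:blow-up}.

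First I would note that, because~$\QQ$ is the domain of the convex function~$\psi_s$, it is itself convex; hence any segment $[y_1,y_2]\subseteq\QQ$. Integrating \eqref{hp:unif_conv} along such a segment gives the strong monotonicity estimate
\[
 (\Lambda(y_2)-\Lambda(y_1))\cdot(y_2-y_1)
 = \int_0^1 \nabla^2\psi_s(y_1+t(y_2-y_1))(y_2-y_1)\cdot(y_2-y_1)\,\d t
 \geq c\abs{y_2-y_1}^2,
\]
which immediately implies injectivity of~$\Lambda$.

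Next, for the blow-up statement~\eqref{Lambda_blowup}, I would fix an arbitrary reference point $y_0\in\QQ$ and use the standard convexity inequality $\psi_s(y_0)\geq \psi_s(y)+\Lambda(y)\cdot(y_0-y)$, which rearranges (via Cauchy--Schwarz) to
\[
 \abs{\Lambda(y)}\cdot\abs{y-y_0} \geq \psi_s(y) - \psi_s(y_0).
\]
Since~$\QQ$ is bounded by~\eqref{hp:Q}, the factor~$\abs{y-y_0}$ is bounded, so~\eqref{hp:blow-up} forces $\abs{\Lambda(y)}\to+\infty$ as $y\to\partial\QQ$.

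For surjectivity I would, given $z\in\R^m$, minimise the auxiliary strictly convex function~$\phi_z(y):=\psi_s(y)-z\cdot y$ on~$\QQ$. Because~$\QQ$ is bounded and~$\psi_s$ blows up at~$\partial\QQ$, the map~$\phi_z$ is coercive in the sense that any minimising sequence must stay in a compact subset of~$\QQ$: indeed $z\cdot y$ remains bounded on~$\overline\QQ$ while $\psi_s(y_k)\to+\infty$ along any subsequence approaching $\partial\QQ$, contradicting minimality. A minimiser $y^\ast\in\QQ$ therefore exists, and the first-order condition reads $\Lambda(y^\ast)=z$.

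Finally, $\nabla\Lambda(y)=\nabla^2\psi_s(y)$ is symmetric and, by~\eqref{hp:unif_conv}, positive definite with smallest eigenvalue $\geq c$, hence invertible with $\norm{(\nabla^2\psi_s(y))^{-1}}\leq c^{-1}$. The inverse function theorem applied at each point of~$\QQ$ then yields that $\Lambda^{-1}$ is of class~$C^1$ on~$\R^m$, with
\[
 \nabla(\Lambda^{-1})(z) = \bigl(\nabla^2\psi_s(\Lambda^{-1}(z))\bigr)^{-1},
\]
giving the desired bound $\norm{\nabla(\Lambda^{-1})(z)}\leq c^{-1}$. The only mildly delicate step is surjectivity: it is tempting to argue purely by openness of $\Lambda(\QQ)$ (from the inverse function theorem) plus a connectedness-and-closedness argument, but the cleanest route is the direct minimisation above, which is where the blow-up hypothesis~\eqref{hp:blow-up} and the boundedness in~\eqref{hp:Q} combine to pin the minimiser strictly inside~$\QQ$.
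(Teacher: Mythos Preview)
Your proof is correct. The paper takes a different, more citation-heavy route: it invokes classical results from Rockafellar's \emph{Convex Analysis} (essential smoothness of closed proper convex functions that are $C^1$ on their domain, and the theory of Legendre-type functions) to obtain the blow-up~\eqref{Lambda_blowup} and the bijectivity $\Lambda(\QQ)=\R^m$, and then finishes with the inverse function theorem exactly as you do. Your argument is more elementary and fully self-contained: the strong-monotonicity identity gives injectivity cleanly, the first-order convexity inequality plus boundedness of~$\QQ$ gives~\eqref{Lambda_blowup} directly, and the minimisation of $\psi_s(y)-z\cdot y$ is a transparent way to get surjectivity without appealing to Legendre duality. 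The paper's route is shorter on the page but outsources the substance; yours makes the mechanism visible and in particular shows explicitly how~\eqref{hp:blow-up} and the boundedness hypothesis in~\eqref{hp:Q} conspire to trap the minimiser inside~$\QQ$.
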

\begin{proof}
 To prove~\eqref{Lambda_blowup}, it suffices to note that as $\psi_s$ is a closed proper convex function which is $C^1$ on an open domain, so by applying classical results from convex analysis \cite[Theorem 25.1, Theorem 26.1]{rockafellar1970convex}, we see that $\psi_s$ satisfies the property of {\it essential smoothness}, which implies~\eqref{Lambda_blowup}. More so, as $\psi_s$ is also strictly convex on a bounded domain, this implies $\psi_s$ is a Legendre-type function which provides the results that $\Lambda(\mathcal{Q})=\mathbb{R}^m$ \cite[Corollary 13.3.1]{rockafellar1970convex}, and that $\Lambda$ is a $C^0$ bijection from $\mathcal{Q}\to\Lambda(Q)$ \cite[Theorem 26.5]{rockafellar1970convex}. The $C^1$ regularity of $\Lambda^{-1}$ follows immediately from the inverse function theorem, as $\psi_s$ is strongly convex. 
\end{proof}

The Euler-Lagrange equation~\eqref{EL} and Lemma~\ref{lemma:nabla_inv}
have important consequences in terms of
regularity and ``strict physicality'' of minimisers --- that is,
the image of~$u_\eps$ does not touch the boundary of the 
physically admissible set~$\QQ$.

\begin{proposition} \label{prop:physicality}
 Minimisers~$u_\eps$ of the functional~$E_\eps$
 in the class~$\mathscr{A}$ are Lipschitz-continuous
 on~$\Omega$, with~$\norm{\nabla u_\eps}_{L^\infty(\Omega)}\lesssim\eps^{-1}$.
 Moreover, there exists a number~$\delta>0$ such that for any~$\eps>0$
 and any~$x\in\Omega$,
 \begin{equation} \label{physicality}
  \dist(u_\eps(x), \, \partial\QQ) \geq\delta.
 \end{equation}
\end{proposition}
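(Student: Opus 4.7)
The proof will be a straightforward bootstrap from the Euler-Lagrange equation~\eqref{EL}, combined with Lemma~\ref{lemma:nabla_inv}. The order will be: first establish the strict physicality bound~\eqref{physicality}, and then use it, together with invertibility of~$\Lambda$, to upgrade to a Lipschitz estimate.

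\textbf{Step 1 (Strict physicality).} Since~$u_\eps\in\QQ$ a.e.\ on~$\Omega$ and~$u_\eps=u_{\mathrm{bd}}\in\QQ$ a.e.\ on~$\R^3\setminus\Omega$ (by~\eqref{hp:bd}), and~$\QQ$ is bounded by~\eqref{hp:Q}, we have~$\norm{u_\eps}_{L^\infty(\R^3)}\leq\mathrm{diam}(\QQ)\lesssim 1$, uniformly in~$\eps$. Then I would bound the right-hand side of~\eqref{EL} pointwise by
\[
 \abs{(K_\eps*u_\eps)(x)} \leq \norm{K_\eps}_{L^1(\R^3)} \norm{u_\eps}_{L^\infty(\R^3)}
 = \norm{K}_{L^1(\R^3)} \norm{u_\eps}_{L^\infty(\R^3)} \lesssim 1,
\]
using the scaling~$\norm{K_\eps}_{L^1}=\norm{K}_{L^1}$ together with~\eqref{hp:Kfirst}. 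The Euler-Lagrange equation thus yields~$\abs{\Lambda(u_\eps(x))}\leq C$ for a.e.~$x\in\Omega$, with~$C$ independent of~$\eps$. The blow-up property~\eqref{Lambda_blowup} from Lemma~\ref{lemma:nabla_inv} then implies the existence of~$\delta>0$ such that~$\{y\in\QQ:\abs{\Lambda(y)}\leq C\}\subseteq\{y\in\QQ:\dist(y,\partial\QQ)\geq\delta\}$, giving~\eqref{physicality}.

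\textbf{Step 2 (Lipschitz regularity).} Having established~\eqref{physicality}, I would invert the Euler-Lagrange equation using Lemma~\ref{lemma:nabla_inv} to obtain~$u_\eps(x)=\Lambda^{-1}((K_\eps*u_\eps)(x))$ for a.e.~$x\in\Omega$. Since $\nabla(\Lambda^{-1})$ is globally bounded on~$\R^m$ by~\eqref{nabla_inv}, the map~$\Lambda^{-1}$ is globally Lipschitz with constant~$c^{-1}$. It therefore suffices to show that~$K_\eps*u_\eps$ is Lipschitz on~$\R^3$ with constant~$\lesssim\eps^{-1}$. For this I would estimate, for~$x_1,x_2\in\R^3$,
\[
 \abs{(K_\eps*u_\eps)(x_1)-(K_\eps*u_\eps)(x_2)}
 \leq \norm{K_\eps(x_1-\cdot)-K_\eps(x_2-\cdot)}_{L^1(\R^3)}\norm{u_\eps}_{L^\infty(\R^3)},
\]
and bound the first factor by~$\abs{x_1-x_2}\norm{\nabla K_\eps}_{L^1(\R^3)}$ via the standard translation-difference estimate for $W^{1,1}$ functions. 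Since~$\norm{\nabla K_\eps}_{L^1(\R^3)}=\eps^{-1}\norm{\nabla K}_{L^1(\R^3)}$ by scaling, and~$\norm{\nabla K}_{L^1}<+\infty$ by~\eqref{hp:Kfirst}, this yields a Lipschitz constant~$\lesssim\eps^{-1}$ for~$K_\eps*u_\eps$, and hence the same bound for~$u_\eps$ after composing with the Lipschitz map~$\Lambda^{-1}$.

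Neither step presents a genuine obstacle, since both key ingredients (the pointwise Euler-Lagrange equation and the properties of~$\Lambda^{-1}$) have been prepared in Proposition~\ref{prop:EL} and Lemma~\ref{lemma:nabla_inv}. The only point that requires mild care is the order of the two steps: strict physicality must be proven before writing~$u_\eps=\Lambda^{-1}(K_\eps*u_\eps)$, because invertibility of~$\Lambda$ delivered by Lemma~\ref{lemma:nabla_inv} is formulated on~$\QQ$, not on its closure, and we need to know that~$u_\eps(x)$ stays in the interior. Once this is settled, the Lipschitz estimate follows by a direct convolution argument, with the~$\eps^{-1}$ factor coming purely from the scaling of~$\nabla K_\eps$.
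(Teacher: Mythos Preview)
Your proposal is correct and follows essentially the same route as the paper: first bound $\abs{\Lambda(u_\eps)}$ via Young's inequality and invoke~\eqref{Lambda_blowup} for strict physicality, then use $K\in W^{1,1}$ and the scaling $\norm{\nabla K_\eps}_{L^1}=\eps^{-1}\norm{\nabla K}_{L^1}$ together with the Lipschitz bound on~$\Lambda^{-1}$ to get the gradient estimate. The only cosmetic difference is that the paper differentiates the Euler--Lagrange equation directly, writing $\nabla(\Lambda\circ u_\eps)=(\nabla K_\eps)*u_\eps$, whereas you phrase the same bound via the translation-difference estimate for $W^{1,1}$ functions; the content is identical.
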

\begin{proof}
 The minimiser~$u_\eps$ takes values in the bounded set~$\QQ$
 and hence, $\|u_\eps\|_{L^\infty(\R^3)}\leq C$,
 where the constant~$C$ does not depend on~$\eps$.
 Moreover, $\|K_\eps\|_{L^1(\R^3)} = \|K\|_{L^1(\R^3)} < +\infty$.
 Then, by applying Young's inequality to~\eqref{EL}, we obtain
 \[
  \norm{\Lambda(u_\eps)}_{L^\infty(\Omega)}
  \leq \|K_\eps\|_{L^1(\R^3)} \|u_\eps\|_{L^\infty(\R^3)} \leq C.
 \]
 On the other hand, we have~$\abs{\Lambda(z)}\to+\infty$
 as~$z\to\partial\QQ$ by~\eqref{Lambda_blowup} and hence,
 \eqref{physicality} follows.
 Since we have assumed that~$K\in W^{1,1}(\R^3, \, \text{Sym}(m))$,
 from the Euler-Lagrange equation~\eqref{EL} we deduce
 \[
  \norm{\nabla(\Lambda\circ u_\eps)}_{L^\infty(\Omega)} 
  = \norm{\nabla K_\eps * u_\eps}_{L^\infty(\Omega)} 
  \leq \eps^{-1} \norm{\nabla K}_{L^1(\Omega)} 
  \norm{u_\eps}_{L^\infty(\Omega)} < + \infty.
 \]
 By Lemma~\ref{lemma:nabla_inv}, we conclude 
 that~$\norm{\nabla u_\eps}_{L^\infty(\Omega)}\lesssim\eps^{-1}$.
\end{proof}

\subsection{A Poincar\'e-type inequality for~$F_\eps$}
\label{sect:Poincare}

The goal of this section is to prove the following inequality
on~$F_\eps$. We recall that the functional~$F_\eps$ 
is defined in~\eqref{loc_energy}.

\begin{proposition} \label{prop:Poincare}
 There exists~$\eps_1 > 0$ such that, for any~$u\in L^\infty(\R^3, \, \R^m)$,
 any~$\rho >0$, any~$x_0\in\R^3$ and any~$\eps\in (0, \, \eps_1\rho]$, there holds
 \[
  \fint_{B_{\rho/2}(x_0)} \abs{u - \fint_{B_{\rho/2}(x_0)}u}^2
  \lesssim \rho^{-1} F_\eps(u, \, B_\rho(x_0)).
 \]
\end{proposition}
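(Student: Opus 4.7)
The plan is to bound the squared double integral $\iint_{B_{\rho/2}(x_0)^2} |u(x) - u(y)|^2 \, dx \, dy$ by $C \rho^5 F_\eps(u, B_\rho(x_0))$ and conclude via the elementary identity
\[
 \fint_{B_{\rho/2}(x_0)} \left|u - \fint_{B_{\rho/2}(x_0)} u\right|^2 = \frac{1}{2|B_{\rho/2}(x_0)|^2} \iint_{B_{\rho/2}(x_0)^2} |u(x) - u(y)|^2 \, dx \, dy,
\]
since dividing by $|B_{\rho/2}(x_0)|^2 \sim \rho^6$ then produces the advertised $\rho^{-1} F_\eps$ bound. The starting ingredient comes from assumption~\eqref{hp:g}: because $K_\eps(z) \cdot \xi^{\otimes 2} \geq g_\eps(z) |\xi|^2$ and $g_\eps \geq k \eps^{-3}$ on $\rho_1\eps \leq |z| \leq \rho_2\eps$, discarding the nonnegative $\psi_b$ term gives
\[
 \int_{A_\eps} |u(z) - u(w)|^2 \, dz \, dw \leq \frac{4\eps^5}{k} F_\eps(u, B_\rho(x_0)), \qquad A_\eps := \left\{(z, w) \in B_\rho(x_0)^2 : \rho_1\eps \leq |z - w| \leq \rho_2\eps\right\} \! .
\]
Thus squared differences are controlled only at scale~$\eps$, and the remaining task is to propagate this to scale~$\rho$ by a chaining argument.

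For each pair $(x, y) \in B_{\rho/2}(x_0)^2$, I build a chain $x = z_0, z_1, \ldots, z_N = y$ of length $N = N(x, y) \lesssim 1 + |x-y|/\eps$ satisfying $|z_i - z_{i+1}| \in [\rho_1\eps, \rho_2\eps]$ and $z_i \in B_\rho(x_0)$. Fixing $\rho_3 \in (\rho_1, \rho_2)$ and a small $\eta > 0$, for $|x-y| \geq \rho_1 \eps$ I take $N = \lceil 2|x-y|/(\rho_3\eps) \rceil$ and $p_i = x + (i/N)(y-x)$, so that $|p_i - p_{i+1}| \in [\rho_3\eps/2, \rho_3\eps]$; I then average the intermediate $z_i$ uniformly over the balls $B_{\eta\eps}(p_i)$, which keeps all consecutive distances within $[\rho_1\eps, \rho_2\eps]$ when $\eta$ is small enough. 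For $|x-y| < \rho_1\eps$ a short off-segment detour with $N \leq 3$ serves the same purpose. Choosing $\eps_1$ small enough that $\rho/2 + C\eps \leq \rho$ keeps all $z_i$ inside $B_\rho(x_0)$. Cauchy--Schwarz, followed by averaging the intermediate $z_i$ over the balls $B_{\eta\eps}(p_i)$ (permissible because $|u(x)-u(y)|^2$ is independent of those variables), yields
\[
 |u(x) - u(y)|^2 \leq N \sum_{i=0}^{N-1} \fint_{B_{\eta\eps}(p_i) \times B_{\eta\eps}(p_{i+1})} |u(z_i) - u(z_{i+1})|^2 \, dz_i \, dz_{i+1}.
\]

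The decisive step is a Fubini/change-of-variables manipulation. For fixed $i$ and fixed chain length $N$, the affine map $(x, y) \mapsto (p_i, p_{i+1})$ of $\R^6$ has Jacobian $N^{-3}$, so for each $(z_i, z_{i+1}) \in A_\eps$ the set of $(x, y) \in B_{\rho/2}(x_0)^2$ producing a chain through this pair has measure at most $N^3 |B_{\eta\eps}|^2$. I decompose $B_{\rho/2}(x_0)^2$ dyadically into shells $\{|x-y| \sim 2^k \eps\}$, $k \geq 0$, so that a uniform chain length $N_k \sim 2^k$ applies within each shell. Combining the Cauchy--Schwarz bound, the Jacobian estimate, and the lower bound on $\int_{A_\eps}|u-u|^2$ in terms of $F_\eps$ yields
\[
 \int_{\{|x-y| \sim 2^k \eps\}} |u(x) - u(y)|^2 \, dx \, dy \lesssim N_k^5 \int_{A_\eps} |u(z) - u(w)|^2 \, dz \, dw \lesssim N_k^5 \eps^5 F_\eps(u, B_\rho(x_0)).
\]
Summing the geometric series over $k$ up to $k_{\max} \sim \log_2(\rho/\eps)$ produces a factor $N_{\max}^5 \eps^5 \lesssim \rho^5$, and the proof is complete. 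The main technical hurdle is the chain construction: one must simultaneously keep consecutive distances inside the annulus $[\rho_1\eps, \rho_2\eps]$, keep all $z_i$ inside $B_\rho(x_0)$, and ensure $N \lesssim |x-y|/\eps$ so that the $N^5$ count balances the dyadic shell measures. Once this geometry is arranged, the change-of-variables bookkeeping and dyadic summation are routine.
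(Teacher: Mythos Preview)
Your chaining argument is correct and gives a genuinely different route to the inequality than the paper's. The paper does not work directly with the double integral $\iint |u(x)-u(y)|^2$; instead it introduces a mollifier $\varphi_\eps$ supported in the annulus where $g$ is bounded below, and proves two auxiliary estimates (Lemmas~\ref{lemma:mollify-Morrey} and~\ref{lemma:mollify-L2}): $\int_{B_{1/2}}|\nabla(\varphi_\eps*u)|^2 \lesssim F_\eps(u,B_1)$ and $\int_{B_{1/2}}|u-\varphi_\eps*u|^2 \lesssim \eps^2 F_\eps(u,B_1)$. The proposition then follows from the classical Poincar\'e inequality applied to the smooth function $\varphi_\eps*u$, plus a triangle inequality to pass back to~$u$.

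Your approach avoids mollification and differentiation entirely, replacing them by a combinatorial telescoping bound together with the Jacobian computation for $(x,y)\mapsto(p_i,p_{i+1})$. This is more elementary and self-contained, and it makes the role of the annular lower bound~\eqref{hp:g} very transparent. The paper's approach, on the other hand, is more modular: the two mollification lemmas are reused later (in the compactness argument of Proposition~\ref{prop:compactness} and in the $\Gamma$-liminf, Proposition~\ref{prop:Gamma-liminf}), so the investment in proving them pays off beyond this proposition. One small point to tidy in your write-up: the displayed Cauchy--Schwarz inequality averages over $B_{\eta\eps}(p_0)\times B_{\eta\eps}(p_1)$ even though $z_0=x$ is not averaged; the bound is still correct since the integrand for the endpoint links does not depend on the dummy variable, but it would be cleaner to treat the first and last links separately (the Fubini step for those links uses only the $y$-variable, and the same $N^3$ volume factor appears).
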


To simplify the proof of Proposition~\ref{prop:Poincare},
we will take advantage of the scaling properties of~$F_\eps$: if
$u_\rho\colon B_1\to\R^m$ is defined by~$u_\rho(x):= u(\rho x + x_0)$
for~$x\in B_1$, then a change of variables gives
\begin{align}
 \rho^{-1} F_\eps(u, \, B_\rho(x_0)) &= 
  F_{\eps/\rho}(u_\rho, \, B_1) \label{scaling}
\end{align}
In the proof of Proposition~\ref{prop:Poincare},
we will adapt arguments from~\cite{taylor2018oseen}.
By assumption~\eqref{hp:g}, there exist positive numbers~$\rho_1 < \rho_2$,
$k$ such that $g\geq k$ a.e.~on~$B_{\rho_2}\setminus B_{\rho_1}$.
Let $\varphi\in C^\infty_\mathrm{c}(B_{\rho_2}\setminus B_{\rho_1})$
be a non-negative, radial function
(i.e.~$\varphi(z) = \tilde{\varphi}(\abs{z})$ for~$z\in\R^3$)
such that $\int_{\R^3}\varphi(z)\, \d z = 1$. 
Since~$g$ is bounded away from zero on the support of~$\varphi$, there holds
\[
 \varphi + \abs{\nabla\varphi} \leq C g
 \qquad \textrm{pointwise a.e.~on } \R^3,
\]
for some constant~$C$ that depends on~$g$ and~$\varphi$;
however, $\varphi$ is fixed once and for all, and so is~$C$.
We define~$\varphi_\eps(z) :=\eps^{-3}\varphi(\eps^{-1}z)$
for any~$z\in\R^3$ and~$\eps>0$. Then, $\varphi_\eps\in C^\infty_{\mathrm{c}}(\R^3)$
is non-negative, even, satisfies $\int_{\R^3}\varphi_\eps(z)\, \d z = 1$ and
\begin{equation} \label{mollify}
 \varphi_\eps + \eps \abs{\nabla\varphi_\eps} \leq C g_\eps
 \qquad \textrm{pointwise a.e.~on } \R^3.
\end{equation}

\begin{lemma} \label{lemma:mollify-Morrey}
 There exists~$\eps_2 > 0$ such that, for any~$u\in L^\infty(B_1, \, \R^m)$
 and any~$\eps\in (0, \, \eps_2]$, there holds
 \[
  \int_{B_{1/2}} \abs{\nabla(\varphi_\eps * u)}^2 \lesssim
  \eps^{-2} \int_{B_1\times B_1} K_\eps(x - y)\cdot 
  \left(u(x) - u(y)\right)^{\otimes 2} \d x\, \d y.
 \]
\end{lemma}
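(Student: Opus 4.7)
}

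The plan is to derive a pointwise bound on $\abs{\nabla(\varphi_\eps * u)(x)}^2$ for $x \in B_{1/2}$ by re-expressing the convolution derivative as a difference integral, applying Cauchy--Schwarz, and then using the relations~\eqref{mollify} and $g \leq \lambda_{\min}(K)$ to pass to the non-local energy density.

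First, I would use that $\varphi_\eps$ is even (hence $\nabla\varphi_\eps$ is odd) together with the fact that it has total integral one to note that $\int_{\R^3} \nabla\varphi_\eps(x-y) \,\d y = 0$. This allows one to write
\[
  \nabla(\varphi_\eps * u)(x) = \int_{\R^3} \nabla\varphi_\eps(x-y)\bigl(u(y) - u(x)\bigr) \d y.
\]
Since $\nabla\varphi_\eps$ is supported in $B_{\eps\rho_2}$, the actual domain of integration is $B_{\eps\rho_2}(x)$, which is contained in $B_1$ provided $x\in B_{1/2}$ and $\eps\leq \eps_2 := 1/(2\rho_2)$.

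Next, I apply the Cauchy--Schwarz inequality with weight $\abs{\nabla\varphi_\eps(x-y)}$ in both factors. The first factor is simply $\norm{\nabla\varphi_\eps}_{L^1(\R^3)}$, which scales as $\eps^{-1}\norm{\nabla\varphi}_{L^1(\R^3)} \lesssim \eps^{-1}$. For the second factor, I invoke~\eqref{mollify} to replace $\abs{\nabla\varphi_\eps(x-y)}$ by $C\eps^{-1} g_\eps(x-y)$, and then use the pointwise inequality $g_\eps(x-y)\abs{u(x)-u(y)}^2 \leq K_\eps(x-y)\cdot(u(x)-u(y))^{\otimes 2}$, which follows from the definition of $g$ as the minimum eigenvalue of $K$. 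This yields the pointwise bound
\[
  \abs{\nabla(\varphi_\eps*u)(x)}^2 \lesssim \eps^{-2}\int_{B_1} K_\eps(x-y)\cdot \bigl(u(x)-u(y)\bigr)^{\otimes 2} \d y
\]
for every $x\in B_{1/2}$, where I have used $B_{\eps\rho_2}(x)\subseteq B_1$ and nonnegativity of the integrand to enlarge the integration domain to $B_1$.

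Finally, integrating this bound over $x\in B_{1/2}\subseteq B_1$ and applying Fubini produces the claimed inequality. The main point to verify carefully is the interplay between the support restriction imposed by $\nabla\varphi_\eps$ (which forces $y$ to lie in $B_1$ only when $\eps$ is sufficiently small relative to the separation between $B_{1/2}$ and $\partial B_1$) and the enlargement to $B_1$ in the final integral; this dictates the choice $\eps_2 = 1/(2\rho_2)$. No serious obstacle is expected; the argument is essentially a weighted Cauchy--Schwarz step combined with bookkeeping of $\eps$-powers and supports.
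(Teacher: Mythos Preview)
Your argument is correct. Both your proof and the paper's use the same three ingredients --- the vanishing of $\int_{\R^3}\nabla\varphi_\eps$, the scaling $\norm{\nabla\varphi_\eps}_{L^1}\lesssim\eps^{-1}$, and the pointwise bound~\eqref{mollify} --- but the organisation differs. The paper expands $\int_{B_{1/2}}\abs{\nabla(\varphi_\eps*u)}^2$ as a triple integral in~$(x,y,z)$, applies the polarisation identity $2a\cdot b = -\abs{a-b}^2+\abs{a}^2+\abs{b}^2$, shows that two of the three resulting terms vanish because $\int\nabla\varphi_\eps=0$, and then controls the surviving term by inserting the intermediate point~$x$ via the triangle inequality. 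Your route is more direct: you use $\int\nabla\varphi_\eps=0$ at the outset to rewrite $\nabla(\varphi_\eps*u)(x)$ as an integral against the difference $u(y)-u(x)$, then apply Cauchy--Schwarz with weight~$\abs{\nabla\varphi_\eps}$. This avoids the polarisation step and the triple-integral bookkeeping, and yields a pointwise bound before integrating in~$x$, which is marginally stronger. One minor remark: the identity $\int_{\R^3}\nabla\varphi_\eps=0$ follows already from compact support (integration by parts), so the appeal to evenness is unnecessary.
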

\begin{proof}
 We adapt the arguments from~\cite[Lemma~2.1 and 
 Proposition~2.1]{taylor2018oseen}. We define
 \[
  I(y, \, z) := \int_{B_{1/2}}
   \nabla\varphi_\eps(x-y)\cdot \nabla\varphi_\eps(x-z)
   \, \d x \qquad \textrm{for } y, \, z\in\R^3.
 \]
 We express the gradient of~$\varphi_\eps* u$ 
 as~$\nabla(\varphi_\eps * u) = (\nabla\varphi_\eps) * u$.
 By applying the identity $2a\cdot b = -\abs{a - b}^2 + \abs{a}^2 + \abs{b}^2$, we obtain
 \[
  \begin{split}
   \int_{B_{1/2}} \abs{\nabla (\varphi_\eps * u)(x)}^2\d x
   &= \int_{\R^3\times\R^3} u(y)\cdot u(z) \, I(y, \, z) \, \d y \, \d z \\
   &= \underbrace{-\frac{1}{2}\int_{\R^3\times\R^3} \abs{u(y) - u(z)}^2 
   I(y, \, z) \, \d y\, \d z}_{=: I_1} \\
   &+ \underbrace{\frac{1}{2}\int_{\R^3\times\R^3} \abs{u(y)}^2
   I(y, \, z) \, \d y\, \d z}_{=: I_2} 
   + \underbrace{\frac{1}{2}\int_{\R^3\times\R^3} \abs{u(z)}^2
   I(y, \, z) \, \d y\, \d z}_{=: I_3}
  \end{split}
 \]
 We first consider the term~$I_2$. Since~$\varphi_\eps$ is compactly 
 supported, we have $\int_{\R^3}\nabla\varphi_\eps(z)\,\d z = 0$. Therefore, 
 \[
  I_2 = \frac{1}{2}\int_{B_{1/2}\times\R^3} 
  \abs{u(y)}^2 \nabla \varphi_\eps(x-y)\cdot \left(
  \int_{\R^3} \nabla \varphi_\eps(x-z) \, \d z \right) \d x\, \d y = 0,
 \]
 and likewise~$I_3 = 0$. Now, we consider~$I_1$. 
 The gradient~$\nabla\varphi_\eps$ is supported
 in a ball of radius~$C\eps$, where $C$ is an~$\eps$-independent constant.
 This implies
 \[
  \begin{split}
   I_1 
   &= \frac{1}{2}\int_{B_{1/2 + C\eps}\times B_{1/2 + C\eps}}
    \abs{u(y) - u(z)}^2 \left( \int_{B_{1/2}} \nabla\varphi_\eps(x - y)
    \cdot \nabla\varphi_\eps(x - z) \, \d x \right) \, \d y\, \d z \\
   &\leq \int_{B_{1/2}\times B_{1/2 + C\eps}\times B_{1/2 + C\eps}}
    \abs{u(y) - u(x)}^2 \abs{\nabla\varphi_\eps(x - y)} 
    \abs{\nabla\varphi_\eps(x - z)} \, \d x \, \d y\, \d z \\
   &\qquad + \int_{B_{1/2}\times B_{1/2 + C\eps}\times B_{1/2 + C\eps}}
    \abs{u(x) - u(z)}^2 \abs{\nabla\varphi_\eps(x - y)} 
    \abs{\nabla\varphi_\eps(x - z)} \, \d x \, \d y\, \d z \\
   &\leq 2\norm{\nabla\varphi_\eps}_{L^1(\R^3)}
    \int_{B_{1/2}\times B_{1/2 + C\eps}} \abs{u(y) - u(x)}^2 
    \abs{\nabla\varphi_\eps(y - x)}\d x\, \d y
  \end{split}
 \]
 Thanks to~\eqref{mollify}, we obtain
 \[
  \begin{split}
   I_1 \lesssim \eps^{-2} \norm{g}_{L^1(\R^3)}
    \int_{B_{1/2}\times B_{1/2 + C\eps}} 
    \abs{u(y) - u(x)}^2 g_\eps(y - x) \, \d x\, \d y.
  \end{split}
 \]
 For~$\eps$ sufficiently small we have~$1/2 + C\eps<1$,
 and the lemma follows. 
\end{proof}

Given two sets~$A\subseteq\R^3$, $A^\prime\subseteq\R^3$,
we write~$A\csubset A^\prime$ when the \emph{closure}
of~$A$ is contained in~$A^\prime$.

\begin{lemma} \label{lemma:mollify-L2}
 Let~$A$, $A^\prime$ be open sets such 
 that $A\csubset A^\prime\subseteq\R^{3}$.
 Then, there exists~$\eps_3 = \eps_3(A, \, A^\prime)$
 such that, for any~$u\in L^\infty(A^\prime, \, \R^m)$
 and any~$\eps\in (0, \, \eps_3]$, there holds
 \[
  \int_{A} \abs{u - \varphi_\eps * u}^2 \lesssim
  \int_{A^\prime\times A^\prime} 
  K_\eps(x - y)\cdot \left(u(x) - u(y)\right)^{\otimes 2} \d x\, \d y.
 \]
\end{lemma}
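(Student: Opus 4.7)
The plan is to start from the pointwise identity
\[
 u(x) - (\varphi_\eps * u)(x) = \int_{\R^3} \varphi_\eps(x-y)\bigl(u(x) - u(y)\bigr)\, \d y,
\]
which holds because $\varphi_\eps$ has integral one. Since $\varphi_\eps\geq 0$ is a probability density, Jensen's inequality (applied to the convex function $v\mapsto |v|^2$) yields the bound
\[
 \abs{u(x) - (\varphi_\eps * u)(x)}^2 \leq \int_{\R^3} \varphi_\eps(x-y)\abs{u(x) - u(y)}^2 \, \d y.
\]

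Next I would exploit the compact support of $\varphi_\eps$. Because $\varphi_\eps$ is supported in $B_{\rho_2\eps}\setminus B_{\rho_1\eps}$, the choice $\eps_3 := \rho_2^{-1}\dist(A, \, \partial A^\prime)$ guarantees that, for $x\in A$ and $\eps\in (0, \, \eps_3]$, the support of $y\mapsto \varphi_\eps(x-y)$ is contained in $A^\prime$. Integrating the previous inequality over $A$ and applying Fubini, I then use the key pointwise comparison~\eqref{mollify}, namely $\varphi_\eps\leq C g_\eps$ almost everywhere, to get
\[
 \int_A \abs{u - \varphi_\eps * u}^2 \lesssim \int_{A^\prime\times A^\prime} g_\eps(x-y)\abs{u(x) - u(y)}^2 \, \d x \, \d y.
\]

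Finally, the inequality is upgraded from $g_\eps$ to the full tensor $K_\eps$ by recalling that, by definition, $g(z) = \lambda_{\min}(K(z))$, and that $g\geq 0$ almost everywhere by assumption~\eqref{hp:g}. Since $K(z)$ is symmetric, for every $v\in\R^m$ there holds
\[
 K(z)\cdot v^{\otimes 2} = K(z)v\cdot v \geq \lambda_{\min}(K(z))\abs{v}^2 = g(z)\abs{v}^2
\]
at almost every $z\in\R^3$. Applying this to $v = u(x) - u(y)$ (with $z = x-y$, after rescaling) gives the claimed inequality. The main ``obstacle'' is essentially bookkeeping: choosing $\eps_3$ small enough that convolution integrals do not leak outside $A^\prime$. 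Everything else is a chain of standard inequalities, and no smoothness or extension hypothesis on $u$ is required beyond boundedness on $A^\prime$.
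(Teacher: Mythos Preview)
Your proposal is correct and follows essentially the same route as the paper: the pointwise identity, Jensen's inequality, the compact-support cutoff via a choice of~$\eps_3$ proportional to $\dist(A,\partial A')$, and the comparison $\varphi_\eps\lesssim g_\eps$. You spell out the final passage from $g_\eps$ to $K_\eps$ via $g=\lambda_{\min}(K)$ more explicitly than the paper does, but the argument is the same.
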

\begin{proof}
 Since~$\int_{\R^3}\varphi_\eps(z)\,\d z = 1$, we have
 \[
  I := \int_{A} \abs{u(x) - (\varphi_\eps*u)(x)}^2\d x
  = \int_{A} \abs{\int_{\R^3} \varphi_\eps(x-y) \left(u(x) - u(y)\right) \d y}^2 \d x.
 \]
 We apply Jensen inequality with respect to the 
 probability measure $\varphi_\eps(x-y)\, \d y$:
 \[
  I \leq \int_{A} \left(\int_{\R^3} 
  \varphi_\eps(x-y) \abs{u(x) - u(y)}^2 \d y\right) \d x .
 \]
 Because the support of~$\varphi_\eps$ is contained in a ball 
 of radius~$C\eps$, where~$C$ is an~$\eps$-independent constant, 
 the integrand is equal to zero if~$x\in A$, $\dist(y, \, A) > C\eps$. 
 By applying~\eqref{mollify}, we obtain
 \[
  I \leq \int_{A\times \{y\in\R^3\colon \dist(y, \, A)\leq C\eps\}} 
  g_\eps(x-y) \abs{u(x) - u(y)}^2 \d x \, \d y
 \]
 and, if~$\eps\leq C^{-1}\dist(A, \, \partial A^\prime)$, 
 the lemma follows.
\end{proof}

\begin{proof}[Proof of Proposition~\ref{prop:Poincare}]
 Due to the scaling property~\eqref{scaling}, it suffices to prove that
 \begin{equation} \label{Poincare1}
  \fint_{B_{1/2}} \abs{u - \fint_{B_{1/2}} u}^2
  \lesssim F_{\eps/\rho}(u, \, B_1)
 \end{equation}
 for any~$u\in L^\infty(\R^3, \, \R^m)$ 
 and any~$\eps$, $\rho$ with~$\eps/\rho$ 
 sufficiently small. The triangle inequality
 and the elementary inequality
 $(a + b + c)^2 \leq 3(a^2 + b^2 + c^2)$ imply
 \[
  \fint_{B_{1/2}} \abs{u - \fint_{B_{1/2}} u}^2 \leq
  6 \fint_{B_{1/2}} \abs{u - \varphi_{\eps/\rho} * u}^2 
  + 3 \fint_{B_{1/2}} \abs{\varphi_{\eps/\rho} * u 
    - \fint_{B_{1/2}}\varphi_{\eps/\rho} * u}^2
 \]
 Thanks to the Poincar\'e inequality, we obtain
 \[
  \fint_{B_{1/2}} \abs{u - \fint_{B_{1/2}} u}^2 \lesssim
  \int_{B_{1/2}} \abs{u - \varphi_{\eps/\rho} * u}^2 
  + \int_{B_{1/2}} \abs{\nabla(\varphi_{\eps/\rho} * u)}^2 .
 \]
 If $\eps/\rho$ is sufficiently small, 
 Lemma~\ref{lemma:mollify-Morrey} and Lemma~\ref{lemma:mollify-L2} 
 give
 \[
  \fint_{B_{1/2}} \abs{u - \fint_{B_{1/2}} u}^2 \lesssim
  \left((\eps/\rho)^2 + 1 \right) F_{\eps/\rho}(u, \, B_1),
 \]
 so~\eqref{Poincare1} follows.
\end{proof}

\subsection{Localised $\Gamma$-convergence
for the non-local term}

The $\Gamma$-convergence of the functional~$E_\eps$, as~$\eps\to 0$,
was studied in~\cite{taylor2018oseen}. In this section, 
we adapt the arguments of~\cite{taylor2018oseen} to prove a
localised $\Gamma$-convergence result. We focus on the 
interaction part of the free energy only, since this is all we
need in the proof of Theorem~\ref{th:Holder}.
{\BBB We denote by $F_\eps^\nl$ the non-local interaction 
part of~$F_\eps$, given by
\begin{equation} \label{locintenergy}
 \begin{split}
  F^\nl_\eps(u, \, G) :=& F_\eps(u, \, G) - 
   \frac{1}{\eps^2}\int_G \psi_b(u(x)) \, \d x \\
   =& \frac{1}{4\eps^2} \int_{G\times G} K_\eps(x - y)
   \cdot \left(u(x) - u(y)\right)^{\otimes 2} \,\d x \, \d y
 \end{split}
\end{equation}
for any~$u\in L^\infty(\R^3, \, \QQ)$ and any Borel set~$G\subseteq\R^3$.}

\begin{proposition} \label{prop:Gamma-liminf}
 {\BBB Let~$\rho>0$, $x_0\in\R^3$, 
 and let~$v_\eps\in L^2(B_\rho(x_0), \, \R^m)$, $v_0\in H^1(B_\rho(x_0), \, \R^m)$
 be such that $v_\eps\to v_0$ strongly in $L^2(B_\rho(x_0))$ 
 as~$\eps\to 0$. Then, for any open set~$G\subseteq B_{\rho}(x_0)$ we have
 \begin{equation} \label{liminf}
  \int_{G} L\nabla v_0\cdot\nabla v_0
  \leq \liminf_{\eps\to 0} F_\eps^\nl(v_\eps, \, G)
 \end{equation} }
\end{proposition}

\begin{proposition} \label{prop:Gamma-limsup}
 {\BBB Let~$\rho>0$. $x_0\in\R^3$. Let
 $v_\eps\in H^1(B_\rho(x_0), \, \R^m)$, $v_0\in H^1(B_\rho(x_0), \, \R^m)$
 be such that $v_\eps\to v_0$ strongly in $H^1(B_\rho(x_0))$
 as~$\eps\to 0$. Then
 \begin{equation*}
  \limsup\limits_{\eps\to 0} F_\eps^\nl(v_\eps, \, B_\rho(x_0))
  \leq \int_{B_\rho(x_0)} L\nabla v_0\cdot \nabla v_0
 \end{equation*} }
\end{proposition}

In the proofs of Proposition~\ref{prop:Gamma-liminf}
and~\ref{prop:Gamma-limsup}, we will use the following notation.
Given a vector~$w\in\R^3\setminus\{0\}$ and a 
function~$u$ defined on a subset of~$\R^3$, we define
the difference quotient
\[
  D_w u(x) := \frac{u(x+w) - u(x)}{\abs{w}}
\]
for any $x$ in the domain of~$u$ such that $x+w$ belongs to the domain of~$u$.
If~$\abs{w}\leq h$, $u\in H^1(B_{\rho+h})$, 
and $|\cdot|_*$ is any seminorm on~$\R^{m}$, then
\begin{equation} \label{diffq}
  \int_{B_\rho} |D_{\eps w}u(x)|_*^2\, \d x 
  \leq \int_{B_{\rho+\eps h}}|(\hat{w}\cdot\nabla)u(x)|_*^2\,\d x
\end{equation} 
where~$\hat{w} := w/\abs{w}$. This follows from the same technique
as, e.g.,~\cite[Lemma 7.23]{gilbarg2015elliptic}, for the case  
in which we have the standard 
Euclidean norm. However, we realise the proof only relies
on the convexity of the seminorm, and no further structure.
For convenience, we give the proof
of Proposition~\ref{prop:Gamma-limsup} first.

\begin{proof}[Proof of Proposition~\ref{prop:Gamma-limsup}]
 We assume that~$x_0 = 0$.
 Using a reflection across the boundary of~$B_\rho$ and a cut-off function, we 
 define~$v_\eps$ and~$v_0$ on~$\R^3\setminus B_\rho$, 
 in such a way that $v_\eps\in H^1(\R^3, \, \R^m)$, 
 $v_0\in H^1(\R^3, \, \R^m)$ and 
 $v_\eps\to v_0$ strongly in~$H^1(\R^3)$.
 Let~$t>0$ be a parameter. We have
 \begin{equation*} 
  \begin{split}
   &\frac{1}{4\eps^2} \int_{B_\rho}\int_{B_\rho}
   K_\eps(x-y)\cdot\left(v_\eps(x)-v_\eps(y)\right)^{\otimes 2}\,\d x\,\d y\\
   &\leq \frac{1}{4}\int_{B_\rho}\int_{\mathbb{R}^3}|z|^2K(z)\cdot\left(D_{\eps z}v_\eps(x)\right)^{\otimes 2}\,\d z \,\d x\\
   &= \frac{1}{4}\int_{B_\rho}\int_{B_\frac{t}{\eps}}|z|^2K(z)\cdot\left(D_{\eps z}v_\eps(x)\right)^{\otimes 2}\,\d z \,\d x
   + \frac{1}{4} \int_{B_\rho}\int_{\mathbb{R}^3\setminus B_\frac{t}{\eps}}|z|^2K(z)\cdot\left(D_{\eps z}v_\eps(x)\right)^{\otimes 2}\,\d z\,\d x.
  \end{split}
 \end{equation*}
 To estimate the first integral at the right-hand side, 
 we exchange the order of integration and, for any~$z$,
 we apply~\eqref{diffq} to the
 seminorm~$\abs{\xi}_*^2 := \abs{z}^2K(z) \cdot \xi^{\otimes 2}$;
 for the second integral, we apply~\eqref{hp:lambda_max}:
 \begin{equation} \label{DSJ1}
  \begin{split}
   &\frac{1}{4\eps^2} \int_{B_\rho}\int_{B_\rho}
   K_\eps(x-y)\cdot\left(v_\eps(x)-v_\eps(y)\right)^{\otimes 2}\,\d x\,\d y\\
   &\leq \frac{1}{4} \int_{\mathbb{R}^3}\int_{B_{\rho+t}}K(z)\cdot\left((z\cdot \nabla) v_\eps(x)\right)^{\otimes 2}\,\d x\,\d  z
   +C\int_{B_\rho}\int_{\mathbb{R}^3\setminus B_\frac{t}{\eps}}g(z)|z|^2|D_{\eps z}v_\eps(x)|^2\,\d z\,\d x\\
   &\stackrel{\eqref{L}}{=} 
    \int_{B_{\rho+t}}L\nabla v_\eps (x)\cdot \nabla v_\eps(x) \,\d x + C\int_{B_\rho}\int_{\mathbb{R}^3\setminus B_\frac{t}{\eps}}g(z)|z|^2|D_{\eps z}v_\eps(x)|^2\,\d z\,\d x.
  \end{split}
 \end{equation}
 We now estimate the latter summand independently. 
 {\BBB For $z \in \R^3\setminus B_\frac{t}{\eps}$, $|\eps z|^2>t^2$, so 
 \[
  |D_{\eps z}v_\eps(x)|^2
  \leq \frac{1}{t^2} \abs{v_\eps(x+\eps z) - v_\eps(x)}^2 
  \leq \frac{2}{t^2} \left(\abs{v_\eps(x+\eps z)}^2 + \abs{v_\eps(x)}^2\right)
 \] }
 Therefore, by applying Fubini theorem, we may estimate
 \begin{equation} \label{DSJ2}
  \begin{split}
  \int_{B_\rho}\int_{\mathbb{R}^3\setminus B_\frac{t}{\eps}}g(z)|z|^2|D_{\eps z}v_\eps(x)|^2\,\d z\,\d x
  &\leq \frac{{\BBB 4\norm{v_\eps}_{L^2(\R^3)}^2}}{t^2}\int_{\mathbb{R}^3\setminus B_\frac{t}{\eps}}g(z)|z|^2\,\d z
  \end{split}
 \end{equation}
 As $g$ has finite second moment {\BBB and 
 $\norm{v_\eps}_{L^2(\R^3)}\leq C$}, for fixed $t$ we must have that 
 \begin{equation} \label{DSJ3}
  \lim\limits_{\eps \to 0}\frac{{\BBB 2\norm{v_\eps}^2_{L^2(\R^3)}}}{t^2}\int_{\mathbb{R}^3\setminus B_\frac{t}{\eps}}g(z)|z|^2\,\d z=0. 
 \end{equation}
 Combining~\eqref{DSJ1}, \eqref{DSJ2} and~\eqref{DSJ3} gives
 \begin{equation*}
  \limsup\limits_{\eps \to 0} \frac{1}{4\eps^2} 
  \int_{B_\rho}\int_{B_\rho} K_\eps(x-y) \cdot\left(v_\eps(x)-v_\eps(y)\right)^{\otimes 2}\,\d y\,\d x
  \leq \limsup\limits_{\eps\to 0} 
  \int_{B_{\rho+t}} L\nabla v_\eps(x) \cdot \nabla v_\eps(x)\,\d x.
 \end{equation*}
 As $v_\eps\to v_0$ in $H^1(\R^3)$, 
 this implies 
 \begin{equation*}
  \lim\limits_{\eps\to 0} \int_{B_{\rho+t}}
  L\nabla v_\eps(x) \cdot \nabla v_\eps(x)\,\d x
  = \int_{B_{\rho+t}} L\nabla v_{0}(x) \cdot \nabla v_{0}(x)\,\d x.
 \end{equation*}
 Therefore we have 
 \begin{equation*}
  \limsup\limits_{\eps \to 0} \frac{1}{4\eps^2}
  \int_{B_\rho}\int_{B_\rho}K_\eps(x-y)\cdot\left(v_\eps(x)-v_\eps(y)\right)^{\otimes 2}\,\d y\,\d x
  \leq \int_{B_{\rho+t}} L\nabla v_{0}(x) \cdot \nabla v_{0}(x)\,\d x,
 \end{equation*}
 and passing to the limit as~$t\to 0$ in the right-hand side
 gives the desired result. 
\end{proof}

\begin{proof}[Proof of Proposition~\ref{prop:Gamma-liminf}]
 Again, we assume that~$x_0 = 0$.
 Without loss of generality, we may assume that
 \begin{equation} \label{DSJ0}
  \liminf_{\eps\to 0} 
  \frac{1}{4\eps^2} \int_{G\times G} K_\eps(x - y)
  \cdot \left(v_\eps(x) - v_\eps(y)\right)^{\otimes 2} \,\d x \, \d y < + \infty,
 \end{equation}
 otherwise there is nothing to prove. Up to extraction of a (non-relabelled)
 subsequence, we may also assume that the
 left-hand side of~\eqref{DSJ0} is actually a limit.
 Let $G\subseteq B_{\rho}$ be open and~$G^\prime\csubset G$. 
 Then we may write that 
 \begin{equation} \label{DSJ4}
  \begin{split}
   \frac{1}{4\eps^2} \int_G\int_G & K_\eps(x-y)\cdot \left(v_\eps(x)-v_\eps(y)\right)^{\otimes 2}\,\d y\,\d x\\
   &\geq \frac{1}{4\eps^2} \int_{G^\prime}\int_{G} K_\eps(x-y)\cdot \left(v_\eps(x)-v_\eps(y)\right)^{\otimes 2}\,\d y\,\d x\\
   &= \frac{1}{4} \int_{G^\prime}\int_{\frac{G-x}{\eps}} |z|^2K(z)\cdot \left(D_{\eps z}v_\eps(x)\right)^{\otimes 2}\,\d z\,\d x.
  \end{split}
 \end{equation}
 Let~$G^c := \R^3\setminus G$ and~$\delta := \dist(G^\prime, \, G^c)>0$. 
 We note that 
 \begin{equation*} 
  \begin{split}
   \left|\int_{G^\prime}\int_{\left(\frac{G-x}{\eps}\right)^c}
    |z|^2K(z)\cdot \left(D_{\eps z}v_\eps(x)\right)^{\otimes 2}\,\d y\,\d x\right|
   \lesssim \int_{G^\prime}\int_{B_{\frac{\delta}{\eps}}^c}g(z)|z|^2|D_{\eps z}v_\eps(x)|^2\,\d z\,\d x,
  \end{split}
 \end{equation*}
 which by previous estimates (see~\eqref{DSJ2}, \eqref{DSJ3}) 
 we have seen converges to zero as~$\eps\to 0$. This means 
 \begin{equation} \label{DSJ5}
  \begin{split}
   \liminf\limits_{\eps\to 0}&\int_{G^\prime}\int_{\frac{G-x}{\eps}} |z|^2K(z)\cdot \left(D_{\eps z}v_\eps(x)\right)^{\otimes 2}\,\d z\,\d x\\
   &=\liminf\limits_{\eps\to 0}\int_{G^\prime}\int_{\mathbb{R}^3} |z|^2K(z)\cdot \left(D_{\eps z}v_\eps(x)\right)^{\otimes 2}\,\d z\,\d x\\
  \end{split}
 \end{equation}
 Furthermore, we note this can be written as an $L^2$ norm, by defining $w_\eps: G^\prime\times\mathbb{R}^3\to\mathbb{R}^m$ by $w_\eps(z,x):=|z|K^\frac{1}{2}(z)D_{\eps z}v_\eps(x)$. Thanks to~\eqref{DSJ0}, we immediately see that $w_\eps$ is $L^2$-bounded, so must admit an $L^2$-weakly converging subsequence $w_j:=w_{\eps_j}$ with $\eps_j\to 0$ and $w_j$ has weak-$L^2$ limit $w_0$. Furthermore, we take 
 \begin{equation} \label{DSJ6}
  \liminf\limits_{\eps\to 0}\int_{G^\prime}\int_{\mathbb{R}^3} |z|^2K(z)\cdot \left(D_{\eps z}v_\eps(x)\right)^{\otimes 2}\,\d z\,\d x
  =\liminf\limits_{j\to\infty} \norm{w_j}^2_{L^2(G^\prime\times\mathbb{R}^3)}
  \geq \norm{w_0}^2_{L^2(G^\prime\times\mathbb{R}^3)} \! .
 \end{equation}
 It remains to identify the limit~$w_0$.
 We may do this by integrating against test functions.
 Let~$\phi\in C^\infty_{\mathrm{c}}(G^\prime\times \mathbb{R}^3)$.
 There exists some~$R_0>0$ such that, for any~$(y, \, z)\in\R^3\times\R^3$
 with~$|z|>R_0$, $\phi(y, \, z)=0$. 
 Furthermore, there exists some $\delta>0$ so that if 
 $\dist(y, \, (G^\prime)^c)<\delta$, then~$\phi(y, \, z)=0$.
 In particular, if~${\eps_j}< \frac{\delta}{R_0}$
 and~$(x - \eps_j z, \, z)\in\mathrm{supp}(\phi)$, then~$x\in G^\prime$.
 Therefore 
 \begin{equation*}
  \begin{split}
   \langle w_j, \, \phi\rangle
   &=\int_{G^\prime}\int_{\mathbb{R}^3}\phi(x, \, z)|z|K^\frac{1}{2}(z)D_{{\eps_j} z}v_{\eps_j}(x)\,\d z\,\d x\\
   &= \frac{1}{{\eps_j}}\int_{G^\prime}\int_{\mathbb{R}^3}\Big(\phi(x-{\eps_j} z, \, z)-\phi(x, \, z)\Big)K^\frac{1}{2}(z)v_{\eps_j}(x)\,\d z\,\d x,
  \end{split}
 \end{equation*}
 and we may exploit the fact that
 \[
  \frac{1}{{\eps_j}}\Big(\phi(x-{\eps_j} z, \, z)-\phi(x, \, z)\Big)
  \to (-z\cdot \nabla_x )\phi(x, \, z)
  \qquad \textrm{uniformly on } G^\prime\times\R^3 \textrm{ as } j\to+\infty,
 \]
 with the assumed $L^2$ convergence of $v_{\eps_j}\to v_0$, to give that 
 \begin{equation*}
  \begin{split}
   \lim\limits_{j\to\infty} \langle w_j, \, \phi\rangle
   &= \lim\limits_{j\to\infty}\frac{1}{\eps_j}\int_{G^\prime}\int_{\mathbb{R}^3}\Big(\phi(x-{\eps_j} z, \, z)-\phi(x, \, z)\Big)K^\frac{1}{2}(z)v_{\eps_j}(x)\,\d z\,\d x,\\
   &=\int_{G^\prime}\int_{\mathbb{R}^3}(-z\cdot \nabla_x )\phi(x, \, z)K^\frac{1}{2}(z)v_0(x)\,\d z\,\d x\\
   &=\int_{G^\prime}\int_{\mathbb{R}^3}\phi(x, \, z)K^\frac{1}{2}(z)(z\cdot \nabla )v_0(x)\,\d z\,\d x=\langle w_0, \, \phi\rangle.
  \end{split}
 \end{equation*}
 Therefore $w_0(x, \, z)=K^\frac{1}{2}(z)(z\cdot \nabla )v_0(x)$,
 and by~\eqref{DSJ4}, \eqref{DSJ5}, \eqref{DSJ6} we have
 \begin{equation*}
  \begin{split}
   \liminf\limits_{\eps\to 0} 
   &\frac{1}{4\eps^2}\int_G\int_G K_\eps(x-y)\cdot \left(v_\eps(x)-v_\eps(y)\right)^{\otimes 2}\,\d y\,\d x\\
   &\geq \frac{1}{4} \liminf\limits_{j\to\infty} \norm{w_j}^2_{L^2(G^\prime\times \mathbb{R}^3)}\\
   &\geq \frac{1}{4}\norm{w_0}^2_{L^2(G^\prime\times \mathbb{R}^3)}\\
   &= \frac{1}{4}\int_{G^\prime}\int_{\mathbb{R}^3}K(z)\cdot\Big((z\cdot \nabla )v_0(x)\Big)^{\otimes 2}\,\d z\,\d x\\
   &\stackrel{\eqref{L}}{=} \int_{G^\prime}L\nabla v_0(x)\cdot \nabla v_0(x)\,\d x.
  \end{split}
 \end{equation*}
 As the set $G^\prime\csubset G$ was arbitrary,
 by monotonicity the lower bound~\eqref{liminf} holds.
\end{proof}

\subsection{{\BBB Other auxiliary results}}

{\BBB In this section, we collect some auxiliary results that will be useful 
in the proof of Theorem~\ref{th:Holder}. Our first result is a 
remark on the kernel~$K$, which will be used repeatedly.
We will use the notation~$a_\eps = \o(b_\eps)$
if there exists a positive sequence~$c_\eps$, depending on~$K$ and~$\QQ$
only, such that~$\abs{a_\eps} \leq c_\eps \abs{b_\eps}$ and~$c_\eps\to 0$
as~$\eps\to 0$.

\begin{lemma} \label{lemma:decayK}
 For any~$\sigma>0$, there holds
 \[
  \sup_{x\in\R^3}\left(\frac{1}{\eps^2}
  \int_{\{y\in\R^3\colon\abs{x - y}\geq\sigma\}}
  g_\eps(x - y) \, \d y\right)
  = \o\!\left(\frac{\eps^{q-2}}{\sigma^2}\right)
 \]
 where~$q>7/2$ is the number given by Assumption~\eqref{hp:g_decay}.
\end{lemma}
\begin{proof}
 By the change of variable~$y = x + \eps z$, we obtain
 \[
  \begin{split}
   \frac{1}{\eps^2} \int_{\{y\in\R^3\colon\abs{x - y}\geq\sigma\}}
    g_\eps(x - y) \, \d y
   &= \frac{1}{\eps^2} \int_{\{z\in\R^3\colon\abs{z}\geq \sigma/\eps\}}
    g(z) \, \d z \\
   &\leq \frac{\eps^{q-2}}{\sigma^q}
   \int_{\{z\in\R^3\colon\abs{z}\geq \sigma/\eps\}}
    g(z) \abs{z}^q \, \d z
  \end{split}
 \]
 By assumption~\eqref{hp:g_decay}, $g$ has finite moment of order~$q$,
 so the lemma follows.
\end{proof}

Given a function~$u\in L^\infty(\R^3, \, \QQ)$ and
Borel sets~$G\subseteq\R^3$, $G^\prime\subseteq\R^3$, we define
\begin{equation} \label{locdefect}
 \Gamma_\eps(u, \, G, \, G^\prime) := 
  \frac{1}{2\eps^2}\int_{G\times G^\prime} 
     K_\eps(x-y) \cdot \left(u(x) - u(y)\right)^{\otimes 2} \, \d x \, \d y 
\end{equation}
In the terminology introduced by Alberti and
Bellettini~\cite{AlbertiBellettini},
$\Gamma_\eps$ is called the `locality defect'.
Indeed, if~$G$ and~$G^\prime$ are disjoint
and~$F^\nl_\eps$ is defined as in~\eqref{locintenergy}, then
\begin{equation} \label{locdefsplit}
 F^\nl_\eps(u, \, G \cup G^\prime) = F^\nl_\eps(u, \, G) 
 + F^\nl_\eps(u, \, \, G^\prime)
 + \Gamma_\eps(u, \, G, \, G^\prime)
\end{equation}
because~$K$ is assumed to be an even function (by~\eqref{hp:K_even}).
Moreover, for any~$G\subseteq\R^3$, $G^\prime\subseteq\R^3$ we have
$\Gamma_\eps(u, \, G, \, G^\prime) = \Gamma_\eps(u, \, G^\prime, \, G)$.
Given a set~$G\subseteq\R^3$ and a number~$\sigma>0$,
we define
\begin{equation} \label{partialsigma}
 \partial_\sigma G := \left\{x\in\Omega\colon 
  \dist(x, \, \partial G)<\sigma\right\} \!.
\end{equation}
Our next result is an estimate for the `locality defect'~$\Gamma_\eps$.}

\begin{lemma} \label{lemma:locdefect}
 {\BBB Let~$u\in L^\infty(\R^3, \, \QQ)$, let~$G\subseteq G^\prime \subseteq \R^3$
 be bounded Borel sets, and let~$\sigma>0$. Then,
 \begin{equation*} \label{glue0}
  \Gamma_\eps(u, \, G, \, G^\prime\setminus G) 
   \lesssim F_\eps^\nl(u, \, \partial_\sigma G)
    + \o\!\left(\frac{\eps^{q-2}}{\sigma^q}\right) \inf_{\zeta\in\R^m}
    \norm{u - \zeta}^2_{L^2(G^\prime)}
 \end{equation*}
 where~$q>7/2$ is given by~\eqref{hp:g_decay}.}
\end{lemma}
\begin{proof}
 {\BBB If two points~$x\in G$, $y\in G^\prime\setminus G$ 
 satisfy~$\abs{x - y}\leq\sigma$, then necessarily~$x\in\partial_\sigma G$,
 $y\in\partial_\sigma G$. Then, 
 \[
  \begin{split}
   \Gamma_\eps(u, \, G, \, G^\prime\setminus G) 
   &\leq 2F^\nl_\eps(u, \, \partial_\sigma G) \\
   &\qquad + \underbrace{\frac{1}{2\eps^2}
   \int_{\{x\in G^\prime, \, y\in G^\prime\colon
   \abs{x - y} \geq \sigma\}} K_\eps(x-y) \cdot
   \left(u(x) - u(y)\right)^{\otimes 2} \, \d x \, \d y}_{=: I}
  \end{split}
 \]
 We need to estimate the term~$I$. Let~$\zeta\in\R^m$ be a constant.
 The inequality $\abs{u(x) - u(y)}^2 \leq 2\abs{u(x) - \zeta}^2 
 + 2\abs{u(y) - \zeta}^2$, and the assumption that~$K$ is even
 (see~\eqref{hp:K_even}), imply
 \begin{equation} \label{locdefect1}
  \begin{split}
   I&\lesssim \frac{1}{2\eps^2} 
    \int_{\{x\in G^\prime, \, y\in G^\prime\colon
   \abs{x - y} \geq \sigma\}}
    g_\eps(x - y)\abs{u(x) - u(y)}^2 \d x \, \d y \\
    &\lesssim \frac{2}{\eps^2} 
    \int_{\{x\in G^\prime, \, y\in G^\prime\colon
   \abs{x - y} \geq \sigma\}}
    g_\eps(x - y)\abs{u(x) - \zeta}^2 \d x \, \d y
  \end{split}
 \end{equation}
 Then, by Lemma~\ref{lemma:decayK}, we obtain
 \[
  I \lesssim \o\!\left(\frac{\eps^{q-2}}{\sigma^q}\right) \inf_{\zeta\in\R^m}
    \norm{u - \zeta}^2_{L^2(G^\prime)}
 \]
 and the lemma follows.}
\end{proof}

\begin{lemma} \label{lemma:diffGamma}
 {\BBB Let~$u\in L^\infty(\R^3, \, \QQ)$, $\xi\in L^\infty(\R^3, \, \QQ)$,
 let~$G\subseteq \R^3$ be a bounded Borel set such that~$u=\xi$ 
 a.e. in~$\R^3\setminus G$, and let~$\sigma>0$. Then,
 \begin{equation*} 
  \begin{split}
   \Gamma_\eps(\xi, \, G, \, \R^3\setminus G) 
   - \Gamma_\eps(u, \, G, \, \R^3\setminus G) 
    \lesssim F_\eps^\nl(\xi, \, \partial_\sigma G)
     + \o\!\left(\frac{\eps^{q-2}}{\sigma^q}\right)
     \abs{G}^{1/2} \norm{\xi - u}_{L^2(G)}
  \end{split}  
 \end{equation*}
 where~$q>7/2$ is given by~\eqref{hp:g_decay}.}
\end{lemma}

\begin{remark} \label{rk:diffGamma}
 {\BBB The right-hand side of Lemma~\ref{lemma:diffGamma}
 contains the $L^2$-norm of~$\xi - u$, \emph{not} the $L^2$-norm squared.
 This loss of a power will be responsible for 
 additional technicalities later on in the proof 
 (see Lemma~\ref{lemma:decay} below). }
\end{remark}

\begin{proof}[Proof of Lemma~\ref{lemma:diffGamma}]
 {\BBB Let~$H_\sigma := \{(x, \, y)\in G\times(\R^3\setminus G)
 \colon \abs{x-y}\geq\sigma\}$. We have
 \[
  G\times (\R^3\setminus G)\subseteq 
   (\partial_\sigma G\times \partial_\sigma G)\cup H_\sigma
 \]
 and hence,
 \[
  \begin{split}
   &\Gamma_\eps(\xi, \, G, \, \R^3\setminus G)
    - \Gamma_\eps(u, \, G, \, \R^3\setminus G) \\
   &\qquad \leq 2F_\eps^\nl(\xi, \, \partial_\sigma G)
    + \underbrace{\frac{1}{2\eps^2} \int_{H_\sigma} K_\eps(x-y)
    \cdot\left(\left(\xi(x) - u(y)\right)^{\otimes 2} 
    - \left(u(x) - u(y)\right)^{\otimes 2}\right) \d x \, \d y}_{=:I}
  \end{split}
 \]
 Using the identity $K_\eps(x-y)\cdot (a^{\otimes 2} - b^{\otimes 2}) 
 = K_\eps(x-y)(a - b)\cdot(a + b)$, we obtain
 \[
  \begin{split}
   I &= \frac{1}{2\eps^2} \int_{H_\sigma} K_\eps(x-y)
      \left(\xi(x) - u(x)\right) \cdot
      \left(\xi(x) + u(x) - 2u(y)\right) \d x \, \d y 
  \end{split}
 \]
 Since~$u$, $\xi$ take their values in the bounded set~$\QQ$,
 we deduce
 \[
  \begin{split}
   &I \lesssim \frac{1}{\eps^2} \int_{H_\sigma} g_\eps(x-y)
      \abs{\xi(x) - u(x)} \d x \, \d y \\
   &\lesssim \sup_{x\in G} \left( \frac{1}{\eps^2} 
     \int_{\{y\in\R^3\colon\abs{x-y}\geq\sigma\}} g_\eps(x-y)
      \d y \right) \norm{\xi - u}_{L^1(G)}
  \end{split}
 \]
 By applying Lemma~\ref{lemma:decayK} and the H\"older inequality 
 at the right-hand side, the result follows.}
\end{proof}

\begin{lemma} \label{lemma:glue}
 {\BBB Let~$G\subseteq\R^3$ be a Borel set.
 Let~$u_1\in L^\infty(\R^3, \, \QQ)$, 
 $u_2\in L^\infty(\R^3, \, \QQ)$. Then,
 \[
   F_\eps^\nl(u_2, \, G) \lesssim F_\eps^\nl(u_1, \, G) 
   + \frac{1}{\eps^2} \norm{u_2 - u_1}^2_{L^2(G)} 
 \]}
\end{lemma}
\begin{proof}
 {\BBB By writing~$u_2(x) - u_2(y) = (u_1(x) - u_1(y)) 
 + (u_2(x) - u_1(x)) + (u_1(y) - u_2(y))$, and using that~$K$ is even
 (by assumption~\eqref{hp:K_even}), we obtain
 \begin{equation*}
  \begin{split}
   F^\nl_\eps(u_2, \, G) 
   &\lesssim F^\nl_\eps(u_1, \, G) + \frac{2}{\eps^2}\int_{G\times G} 
    K_\eps(x - y)\cdot \left(u_2(x) - u_1(x)\right)^{\otimes 2} \d x \, \d y 
  \end{split}
 \end{equation*}
 and the lemma follows.}
\end{proof}

\begin{lemma} \label{lemma:glueH1}
 {\BBB Let~$G\csubset G^\prime\csubset\R^3$ be open sets
 and~$\sigma\in (0, \, \dist(G, \, \partial G^\prime))$. Then,
 for any~$u\in H^1(G^\prime, \, \QQ)$, we have
 \[
  \begin{split}
   F_\eps^\nl(u, \, G) &\lesssim 
   \int_{G\cup\partial_\sigma G} \abs{\nabla u}^2
   + \o\!\left(\frac{\eps^{q-2}}{\sigma^q}\right) \inf_{\zeta\in\R^m}
    \norm{u - \zeta}^2_{L^2(G)}
  \end{split}
 \] 
 where~$q>7/2$ is the number given by~\eqref{hp:g_decay}.}
\end{lemma}
\begin{proof}
 {\BBB This lemma is a variant of
 Proposition~\ref{prop:Gamma-limsup}. We have
 \begin{equation*} 
  \begin{split}
   F_\eps^\nl(u, \, G) 
   &= \frac{1}{4\eps^2}\int_{\{x\in G, \, y\in G\colon \abs{x - y}\leq\sigma\}}
   K_\eps(x-y)\cdot\left(u(x) - u(y)\right)^{\otimes 2} \,\d x\, \d y \\
   &\qquad + \frac{1}{4\eps^2}
   \int_{\{x\in G, \, y\in G\colon \abs{x - y}\geq\sigma\}}
   K_\eps(x-y)\cdot\left(u(x) - u(y)\right)^{\otimes 2} \,\d x\, \d y 
   =: I_1 + I_2
  \end{split}
 \end{equation*}
 For the first term~$I_1$, we may repeat the very same argument 
 from the proof of Proposition~\ref{prop:Gamma-limsup}, which gives
 \begin{equation*}
  \begin{split}
   I_1 \leq \frac{1}{4\eps^2}\int_{G\times B_{\sigma/\eps}}
   K(z)\cdot\left(u(x + \eps z) - u(x)\right)^{\otimes 2} \,\d x\, \d z
   \lesssim \int_{G\cup\partial_\sigma G} \abs{\nabla u}^2
  \end{split}
 \end{equation*}
 On the other hand, the estimate~\eqref{locdefect1}
 in the proof of Lemma~\ref{lemma:locdefect} shows that
 \[
  I_2 \lesssim \o\!\left(\frac{\eps^{q-2}}{\sigma^q}\right)
  \inf_{\zeta\in\R^m} \norm{u - \zeta}^2_{L^2(G)}
  \qedhere
 \]}
\end{proof}

The next lemma is an estimate on the potential~$\psi_b$.

\begin{lemma} \label{lemma:psib}
 For any~$\delta>0$, there exists a constant~$C_\delta>0$
 such that, for any~$y_1\in\QQ$, $y_2\in\QQ$ with
 $\dist(y_2, \, \partial\QQ)\geq\delta$, we have
 \begin{equation} \label{in_psib}
  \psi_b(y_2) \leq C_\delta 
  \left(\psi_b(y_1) + \abs{y_1 - y_2}^2\right).
 \end{equation}
\end{lemma}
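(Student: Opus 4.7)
The plan is a two-case argument based on whether $|y_1 - y_2|$ is large or small compared to~$\delta$, together with a Lojasiewicz-type gradient estimate for~$\psi_b$. For~$\tau>0$, let $K_\tau := \{y \in \QQ : \dist(y, \partial\QQ) \geq \tau\}$; by~\eqref{hp:Q} and~\eqref{hp:blow-up}, $K_\tau$ is a compact subset of~$\QQ$, and by convexity of~$\psi_s$ (hence of its domain~$\QQ$), $K_\tau$ is convex as well. On~$K_\tau$, $\psi_b$ is~$C^2$ with bounded Hessian and bounded above. Since~$\NN$ is a compact subset of~$\QQ$, we have $d_0 := \dist(\NN, \partial\QQ) > 0$; because the statement is monotone in~$\delta$ (if it holds for some~$\delta_0$, it holds for every~$\delta\geq\delta_0$ with the same constant, as $K_\delta\subseteq K_{\delta_0}$), we may assume $\delta < d_0$, which ensures $\NN \subset K_{\delta/2}$.

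If $|y_1 - y_2| \geq \delta/2$, I would simply bound $\psi_b(y_2) \leq \sup_{K_\delta} \psi_b =: M_\delta < +\infty$ and use $|y_1-y_2|^2 \geq \delta^2/4$ to deduce $\psi_b(y_2) \leq (4M_\delta/\delta^2)\,|y_1 - y_2|^2$. If instead $|y_1 - y_2| < \delta/2$, then $y_1\in K_{\delta/2}$ and the entire segment from~$y_1$ to~$y_2$ stays in~$K_{\delta/2}$ (each point on the segment is within~$\delta/2$ of~$y_2$, which is at distance at least~$\delta$ from~$\partial\QQ$). A second-order Taylor expansion along this segment then yields
\begin{equation*}
 \psi_b(y_2) \leq \psi_b(y_1) + |\nabla\psi_b(y_1)|\,|y_1 - y_2| + C \, |y_1 - y_2|^2,
\end{equation*}
where~$C$ depends only on the $L^\infty(K_{\delta/2})$-norm of~$\nabla^2\psi_b$.

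The main obstacle, and the step requiring real content from the hypotheses, is to control the cross term $|\nabla\psi_b(y_1)|\,|y_1-y_2|$. For this I would establish the Lojasiewicz-type bound
\begin{equation*}
 |\nabla\psi_b(y)|^2 \leq C_1 \, \psi_b(y) \qquad \text{for all } y \in K_{\delta/2},
\end{equation*}
after which Young's inequality $|\nabla\psi_b(y_1)|\,|y_1-y_2|\leq \psi_b(y_1)+ (C_1/4)|y_1-y_2|^2$ closes the argument. To prove the subclaim, split~$K_{\delta/2}$ into a small tubular neighborhood $U_r := \{y\in K_{\delta/2} : \dist(y,\NN)<r\}$ and its complement. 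On~$U_r$, the non-degeneracy assumption~\eqref{hp:non_degeneracy} combined with a $C^2$ Taylor expansion of~$\psi_b$ around each point of~$\NN$ gives $\psi_b(y) \geq c\,\dist(y,\NN)^2$; meanwhile $\nabla\psi_b$ vanishes on~$\NN$ (whose points are interior minimizers of the non-negative function~$\psi_b$) and is Lipschitz on~$K_{\delta/2}$, so $|\nabla\psi_b(y)| \leq L\,\dist(y,\NN)$; squaring and combining yields the subclaim on~$U_r$. On the complement $K_{\delta/2}\setminus U_r$, which is compact and disjoint from~$\NN$, $\psi_b$ is bounded below by a positive constant by continuity while~$|\nabla\psi_b|^2$ is bounded above, so the inequality holds trivially after enlarging~$C_1$ if necessary.
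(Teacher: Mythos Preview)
Your proof is correct, but it takes a different route from the paper's. The paper splits into three cases according to the distances $\dist(y_1,\NN)$ and $\dist(y_2,\NN)$, and relies directly on the two-sided quadratic estimate $\kappa_1\dist^2(y,\NN)\leq\psi_b(y)\leq\kappa_2\dist^2(y,\NN)$ in a tubular neighbourhood of~$\NN$ (a consequence of~\eqref{hp:non_degeneracy}); in the key case where both points are close to~$\NN$, the triangle inequality $\dist(y_2,\NN)\leq\dist(y_1,\NN)+|y_1-y_2|$ finishes the job in one line. Your argument instead splits on $|y_1-y_2|$ versus~$\delta/2$, then in the ``close'' case uses a Taylor expansion along the segment and controls the first-order term via the Lojasiewicz-type bound $|\nabla\psi_b|^2\lesssim\psi_b$ on~$K_{\delta/2}$. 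This is valid --- the Lojasiewicz bound is established exactly from the same ingredients (non-degeneracy near~$\NN$, positivity of~$\psi_b$ and boundedness of~$\nabla\psi_b$ on compact sets away from~$\NN$) --- but it is a longer path: you effectively replace the \emph{upper} quadratic bound on~$\psi_b$ near~$\NN$ by a Taylor expansion plus a gradient estimate, while the paper uses both quadratic bounds directly. The paper's organisation is shorter; yours has the minor advantage that the Lojasiewicz inequality is a reusable statement of independent interest.
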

\begin{proof}
 The assumption~\eqref{hp:non_degeneracy} implies,
 via a Taylor expansion and a compactness argument, that
 there exist~$\gamma>0$, $\kappa_1>0$, $\kappa_2>0$ 
 so that if $\dist(y, \, \NN)<\gamma$, then
 \begin{equation} \label{nondeg}
  \kappa_1 \dist^2(y, \, \NN)\leq 
  \psi_b(y)\leq \kappa_2 \dist^2(y, \, \NN).
 \end{equation}
 To prove the result we exhaust three cases, 
 \begin{enumerate}
  \item $\dist(y_1, \, \NN)\geq \frac{1}{2}\gamma$.
  \item $\dist(y_1, \, \NN)< \frac{1}{2}\gamma$, $\dist(y_2, \, \NN)\geq \gamma$.
  \item $\dist(y_1, \, \NN)< \frac{1}{2}\gamma$, $\dist(y_2, \, \NN)< \gamma$.
 \end{enumerate}
 In the case of (1), we have that such~$y_1$ satisfy
 $\psi_b(y_1)>c_1$ for a constant $c_1>0$
 (that depends on~$\gamma$), as~$y_1$ is bounded away
 from the minimising manifold. We furthermore have 
 that~$\psi_b(y_2)\leq c_2$ because $\dist(y_2, \, \partial\QQ)>\delta$
 (and the constant~$c_2$ will depend on~$\delta$). 
 Therefore the inequality~\eqref{in_psib} holds 
 trivially with $C_\delta=\frac{c_1}{c_2}$.
 
 In the case of (2), since $\dist(y_1, \, \NN)<\frac{1}{2}\gamma$,
 $\dist(y_2, \, \NN)\geq \gamma$, we must have $|y_1-y_2|^2\geq \frac{1}{4}\gamma^2$,
 then we use the upper bound on $\psi_b(y_2)$ as before. 

 In the case of (3), we note that since $y_1,y_2$ are
 both sufficiently close to $\NN$,
 \begin{equation*}
  \begin{split}
   \psi_b(y_2) &\stackrel{\eqref{nondeg}}{\lesssim} \dist^2(y_2, \, \NN)
   \lesssim \dist^2(y_1, \, \NN) + |y_1-y_2|^2 
   \stackrel{\eqref{nondeg}}{\lesssim} \psi_b(y_1)+|y_1-y_2|^2. \qedhere
  \end{split}
 \end{equation*}
\end{proof}

Finally, we conclude this section with interpolation (or extension)
results. The first one is a classical interpolation result for~$H^1$-maps;
it constructs a suitable map in an annulus, with prescribed values on
the boundary. 

\begin{lemma}[\cite{Luckhaus,contreras2018convergence}]
\label{lemma:interpolation}
 For any~$M>0$, there exists~$\eta = \eta(M)>0$
 such that the following statement holds.
 Let~$\QQ_0\csubset\QQ$ be a convex, open set that contains~$\NN$. 
 Let~$\rho$, $\lambda$ be positive numbers with~$\lambda < \rho$,
 and let~$u\in H^1(\partial B_\rho, \, \QQ_0)$,
 $v\in H^1(\partial B_\rho, \, \NN)$ be such that
 \[
  \int_{\partial B_\rho} 
   \left(\abs{\nabla u}^2 + \abs{\nabla v}^2\right)
   \d\H^2\leq M, \qquad 
   \int_{\partial B_\rho} \abs{u - v}^2
   \,\d\H^2 \leq \eta\lambda^2. 
 \]
 Then, there exists a map 
 $w\in H^1(B_{\rho} \setminus B_{\rho - \lambda}, \, \QQ_0)$
 such that $w(x) = u(x)$ for $\H^2$-a.e.~$x\in\partial B_\rho$,
 $w(x) = v(\rho x/(\rho - \lambda))$ 
 for $\H^2$-a.e.~$x\in\partial B_{\rho - \lambda}$, and
 \begin{gather*}
  \int_{B_\rho\setminus B_{\rho-\lambda}} \abs{\nabla w}^2
   \lesssim \lambda \int_{\partial B_\rho}
   \left(\abs{\nabla u}^2 + \abs{\nabla v}^2 +
   \frac{\abs{u - v}^2}{\lambda^2}\right)\d\H^2 \\
  \int_{B_\rho\setminus B_{\rho-\lambda}} \psi_b(w)
   \lesssim \lambda \int_{\partial B_\rho} 
   \psi_b(u)\,\d\H^2
 \end{gather*}
\end{lemma}

\begin{remark} \label{rk:interpolation}
 Lemma~\ref{lemma:interpolation}, in case~$\psi_b=0$,
 was first proven by Luckhaus~\cite[Lemma~1]{Luckhaus}.
 Up to a scaling, the statement given here is essentially
 the same as~\cite[Lemma~B.2]{contreras2018convergence}.
 However, in~\cite{contreras2018convergence} 
 the potential is assumed to be finite and smooth 
 on the whole of~$\R^m$, while our potential~$\psi_b$ is 
 singular out of~$\QQ$. Nevertheless, the proof carries over
 to our setting. Indeed, the map~$w$ constructed 
 in~\cite{contreras2018convergence} takes values in a
 neighbourhood of~$\NN$, whose thickness can be made arbitrarily
 small by choosing~$\eta$ small (see also~\cite[Lemma~1]{Luckhaus}). 
 In particular, we can make sure that the
 image of~$w$ is contained in the set~$\QQ_0$, where
 the function~$\psi_b$ is finite and smooth,
 and the arguments of~\cite{contreras2018convergence} carry over.
 Incidentally, Lemma~\ref{lemma:interpolation} crucially depends
 on the non-degeneracy assumption~\eqref{hp:non_degeneracy}
 for the bulk potential~$\psi_b$.
\end{remark}

We give a variant of Lemma~\ref{lemma:interpolation}
which is adapted to our non-local setting. 

\begin{lemma} \label{lemma:nonlocinterp}
 {\BBB Let~$\QQ_0\csubset\QQ$ be an open convex set.
 Let~$u_\eps$, $u\in L^\infty(\R^3, \, \QQ_0)$
 and~$u^*_\eps$, $u^*\in H^1(B_{1/2}, \, \NN)$
 satisfy the following conditions:
 \begin{gather}
  M_\eps := \int_{B_{1/2}} \abs{\nabla u^*_\eps}^2
   + F_\eps(u_\eps, \, B_1) + \norm{u^*_\eps - u_\eps}^2_{L^2(B_t)}
   \quad \textrm{is bounded} \label{hp:interp3} \\
  u_\eps\to u \ \textrm{ strongly in } L^2(B_{1/2}),
   \quad u^*_\eps\to u^* \label{hp:interp1}
   \ \textrm{ strongly in } H^1(B_{1/2}) \textrm{ as } \eps\to 0 \\
  u^* = u \quad \textrm{ a.e. in } B_1\setminus B_s
   \ \textrm{ for some } s\in (1/4, \, 1/2) \label{hp:interp2}
 \end{gather}
 Let~$\sigma\in (0, \, 1/10)$.
 Then, up to extraction of a (non-relabelled) subsequence,
 there exist maps~$\xi_\eps\in L^\infty(\R^3, \, \QQ_0)$
 and radii~$r$, $t$ with~$s < r < t < 1/2$
 that satisfy the following conditions:
 \begin{enumerate}[label=(\roman*)]
  \item $\xi_\eps = u_\eps$ a.e.~in~$\R^3\setminus B_t$;
  \item ${\xi_\eps}_{|B_r}\in H^1(B_r, \, \QQ_0)$
  and~${\xi_\eps}_{|B_r}\to u^*_{|B_r}$ strongly in~$H^1(B_r)$;
  \item there holds
  \[
   \begin{split}
    &F_\eps(\xi_\eps, \, B_t) 
     + \Gamma_\eps(\xi_\eps, \, B_t, \, \R^3\setminus B_t)
     - \Gamma_\eps(u_\eps, \, B_t, \, \R^3\setminus B_t) \\ 
    &\qquad\qquad \leq F_\eps^\nl(\xi_\eps, \, B_r) 
     + C\int_{B_r\setminus B_{r-\sigma}} \abs{\nabla\xi_\eps}^2 
     + C\sigma M_\eps 
     + \o\!\left(\frac{\eps^{q-2} M_\eps^{1/2}}{\sigma^q}\right)
   \end{split}
  \]
  where~$q>7/2$ is given by~\eqref{hp:g_decay}.
 \end{enumerate} }
\end{lemma}
{\BBB As we will see in the proof, the maps~$\xi_\eps$
agree with~$u^*_\eps$ on~$B_r$, up to rescaling
and interpolation near the boundary of~$B_r$.}
\begin{proof}[Proof of Lemma~\ref{lemma:nonlocinterp}]
 {\BBB We split the proof into several steps.
 
 \medskip
 \begin{step}[Construction of~$\xi_\eps$]
  Let~$\varphi_\eps\in C^{\infty}_{\mathrm{c}}(\R^3)$ 
  be a sequence of mollifiers, defined as in~\eqref{mollify}. 
  Lem\-ma~\ref{lemma:mollify-Morrey} implies
  \begin{equation} \label{energybd}
   \int_{B_{1/2}} \abs{\nabla(\varphi_\eps * u_\eps)}^2
   \lesssim F(u_\eps, \, B_1) \stackrel{\eqref{hp:interp3}}{\leq} M_\eps
  \end{equation}
  for~$\eps$ small enough. Let~$N\geq 1$ be an integer number such that
  \[
   \frac{1}{18\sigma} \leq N \leq \frac{1}{6\sigma}
  \]
  Such a number exists, because of the assumption that~$0 < \sigma < 1/10$.
  We divide the annulus~$B_{1/2}\setminus\bar{B}_{s}$
  into~$N$ concentric sub-annuli:
  \[
   A_i := B_{s + i\frac{1/2 - s}{N}} 
   \setminus
   \bar{B}_{s + (i-1)\frac{1/2 - s}{N}}
   \qquad \textrm{for } i = 1, \, 2, \, \ldots, \, N.
  \]
  We have
  \[
   \sum_{i=1}^N \left(F_\eps(u_\eps, \, A_i)
    + \int_{A_i} \abs{\nabla(\varphi_\eps * u_\eps)}^2
    + \int_{A_i} \abs{\nabla u^*_\eps}^2\right)
    \stackrel{\eqref{energybd}}{\lesssim} M_\eps
  \]
  As a consequence, for any~$\eps$
  we can choose an index~$i(\eps)$ such that
  \begin{equation} \label{comp1}
   F_\eps(u_\eps, \, A_{i(\eps)})
   + \int_{A_i} \abs{\nabla(\varphi_\eps * u_\eps)}^2
   + \int_{A_i} \abs{\nabla u^*_\eps}^2
   \lesssim \frac{M_\eps}{N} \lesssim  \sigma M_\eps
  \end{equation}
  Passing to a subsequence, we may also assume
  that all the indices~$i(\eps)$ are the same, so from now on,
  we write~$i$ instead of~$i(\eps)$. We take 
  positive numbers~$a < b$ such that 
  $A^\prime := B_b\setminus \bar{B}_a\csubset A_i$
  and~$b - a > 5\sigma$.
  Then, Lemma~\ref{lemma:mollify-L2} gives
  \begin{equation} \label{comp2}
   \frac{1}{\eps^2} \int_{A^\prime} 
    \abs{\varphi_\eps * u_\eps - u_\eps}^2
    \lesssim F_\eps(u_\eps, \, A_i) 
    \stackrel{\eqref{comp1}}{\lesssim} \sigma M_\eps
  \end{equation}
  for~$\eps$ small enough. We have assumed that~$u_\eps$
  takes its values in the convex set~$\QQ_0\csubset\QQ$;
  it follows that the image of~$\varphi_\eps*u_\eps$
  is contained in~$\overline{\QQ_0}\csubset\QQ$. Thus, we may apply
  Lemma~\ref{lemma:psib} to estimate the integral 
  of~$\psi_b(\varphi_\eps *u_\eps)$:
  \begin{equation} \label{comp3}
   \begin{split}
    \frac{1}{\eps^2} \int_{A^\prime} 
     \psi_b(\varphi_\eps *u_\eps) 
    &\lesssim
    \frac{1}{\eps^2} \int_{A^\prime} \left(\psi_b(u_\eps)  
     + \abs{\varphi_\eps * u_\eps - u_\eps}^2\right) 
    \stackrel{\eqref{comp2}}{\lesssim}
    \sigma M_\eps
   \end{split}
  \end{equation}
  Using Fatou's lemma, we see that
  \begin{equation*} 
   \begin{split}
    &\int_a^b \left(\liminf_{\eps\to 0} \int_{\partial B_r}
     \abs{\nabla u^*_\eps}^2 + \abs{\nabla(\varphi_\eps * u_\eps)}^2 
     + \frac{1}{\eps^2} \psi_b(\varphi_\eps * u_\eps) 
     \, \d\H^2 \right)\d r \\ 
    &\qquad\qquad\qquad \leq 
     \liminf_{\eps\to 0} \int_{A^\prime}
     \left(\abs{\nabla u^*_\eps}^2 + \abs{\nabla(\varphi_\eps * u_\eps)}^2 
     + \frac{1}{\eps^2} \psi_b(\varphi_\eps * u_\eps)\right)
    \stackrel{\eqref{energybd}, \eqref{comp3}}{\lesssim} \sigma M_\eps
   \end{split}
  \end{equation*}
  By Fubini theorem, there exists a radius~$r\in (a + \sigma, \, b - 3\sigma)$ 
  and a (non-relabelled) subsequence~$\eps\to 0$ such that
  \begin{equation}  \label{comp4}
   \begin{split}
    &\int_{\partial B_r} \left(\abs{\nabla  u^*_\eps}^2 + 
     \abs{\nabla(\varphi_\eps * u_\eps)}^2 
     + \frac{1}{\eps^2} \psi_b(\varphi_\eps * u_\eps) 
     \right)\, \d\H^2 \lesssim M_\eps
   \end{split}
  \end{equation}
  By a similar argument, we may also assume that
  \begin{equation} \label{comp5}
   \begin{split}
    \int_{\partial B_r} \abs{\varphi_\eps * u_\eps -  u^*_\eps}^2
    \d\H^2 \lesssim \eps^2 M_\eps 
    + \frac{1}{\sigma} \int_{A^\prime} \abs{ u^*_\eps - u_\eps}^2
   \end{split}
  \end{equation}
  Due to~\eqref{hp:interp1} and~\eqref{hp:interp2}, 
  we have~$ u^*_\eps - u_\eps \to  u^* - u$
  in~$L^2(A^\prime)$ and~$ u^* = u$ in~$A^\prime$,
  so the right-hand side of~\eqref{comp5} tends to zero. Let
  \begin{equation} \label{def_lambda}
   \lambda_\eps := \left(\eps^2 M_\eps 
    + \frac{1}{\sigma} 
    \int_{A^\prime} \abs{u^*_\eps - u_\eps}^2\right)^{1/4} > 0
  \end{equation}
  We have~$\lambda_\eps\to 0$ as~$\eps\to 0$. Moreover,
  for this choice of~$\lambda_\eps$, the assumptions of
  Lemma~\ref{lemma:interpolation} are satisfied for~$\eps$
  small enough. By applying Lemma~\ref{lemma:interpolation},
  we construct a map
  $w_\eps\in H^1(B_{r} \setminus B_{r - \lambda_\eps}, \, \QQ_0)$
  such that $w_\eps(x) = (\varphi * u_\eps)(x)$ 
  for~$x\in\partial B_r$,
  $w_\eps(x) = v(r x/(r - \lambda_\eps))$ 
  for~$x\in\partial B_{r - \lambda_\eps}$, and
  \begin{equation} \label{comp6}
   \begin{split}
    &\int_{B_r\setminus B_{r-\lambda_\eps}}
     \left(\abs{\nabla w_\eps}^2 
     + \frac{1}{\eps^2} \psi_b(w_\eps)\right)
    \lesssim \lambda_\eps M_\eps 
     + \frac{1}{\lambda_\eps} \left(\eps^2 M_\eps
     + \frac{1}{\sigma} \int_{A^\prime} \abs{ u^*_\eps - u_\eps}^2\right)
   \end{split} 
  \end{equation} 
  and
  \begin{equation} \label{competitorbulk}
   \frac{1}{\eps^2} 
    \int_{B_r\setminus B_{r-\lambda_\eps}} \psi_b(w_\eps)
   \lesssim \frac{\lambda_\eps}{\eps^2} 
    \int_{\partial B_r} \psi_b(w_\eps) \, \d\H^2 
   \lesssim \lambda_\eps M_\eps
  \end{equation}
  The right-hand sides of~\eqref{comp6}, \eqref{competitorbulk} converge
  to zero as~$\eps\to 0$. Finally, we take~$t\in (r + 2\sigma, \, b - \sigma)$
  and we define
  \begin{equation} \label{def_xi}
   \xi_\eps(x) := \begin{cases}
                u_\eps(x) & \textrm{if } x\in\R^3\setminus B_t \\
                (\varphi_\eps * u_\eps)(x)
                 & \textrm{if } B_t\setminus B_r \\
                w_\eps(x) 
                 & \textrm{if } x\in B_r\setminus B_{r - \lambda_\eps}\\
                 u^*_\eps\left(\dfrac{rx}{r-\lambda_\eps}\right) 
                 & \textrm{if } x\in B_{r - \lambda_\eps}
               \end{cases}
  \end{equation}
  By construction, $\xi_\eps=u_\eps$ out of~$B_t$. Moreover,
  a routine computation, based on~\eqref{comp6},
  shows that $\xi_\eps\to u^*_\eps$ strongly in~$H^1(B_r)$
 \end{step}
 
 \medskip
 \begin{step}[Bounds on~$\norm{\xi_\eps - u_\eps}_{L^2(B_t)}$]
  By construction, we have
  \begin{equation*}
   \begin{split}
    \norm{\xi_\eps - u_\eps}_{L^2(B_t\setminus B_r)}
    = \norm{\varphi_\eps * u_\eps - u_\eps}_{L^2(A^\prime)}
    \stackrel{\eqref{comp2}}{\lesssim} \eps \sigma^{1/2} M_\eps^{1/2}
   \end{split}
  \end{equation*}
  On the other hand,
  \begin{equation*}
   \begin{split}
    \norm{\xi_\eps - u_\eps}_{L^2(B_r)}
    \lesssim \norm{w_\eps - \tau_\eps u^*_\eps}_{L^2(B_r\setminus 
    B_{r-\lambda_\eps})}
    + \norm{\tau_\eps u^*_\eps -  u^*_\eps}_{L^2(B_r)}
    + \norm{ u^*_\eps - u_\eps}_{L^2(B_r)}
   \end{split}
  \end{equation*}
  where~$\tau_\eps u^*_\eps(x) :=  u^*_\eps(rx/(r - \lambda_\eps))$.
  We recall that $\norm{ u^*_\eps - u_\eps}_{L^2(B_{1/2})}\leq M_\eps^{1/2}$.
  The norm of~$w_\eps - \tau_\eps u^*_\eps$
  can be estimated using the Poincar\'e inequality and~\eqref{comp6}, 
  because we know that~$w_\eps = \tau_\eps u^*_\eps$ 
  on~$\partial B_{r-\lambda_\eps}$:
  \begin{equation*}
   \begin{split}
    \norm{w_\eps - \tau_\eps u^*_\eps}
     _{L^2(B_r\setminus B_{r-\lambda_\eps})}
    &\lesssim 
    \lambda_\eps \norm{\nabla(w_\eps - \tau_\eps u^*_\eps)}
     _{L^2(B_r\setminus B_{r-\lambda_\eps})}
    \lesssim \left(\lambda_\eps 
    + \frac{\lambda_\eps^{1/2}}{\sigma^{1/2}}\right) M_\eps^{1/2} 
   \end{split}
  \end{equation*}
  A classical argument (see e.g. \cite[Lemma~7.23]{gilbarg2015elliptic}
  for an analogous result) gives
  \begin{equation*}
   \begin{split}
    \norm{\tau_\eps u^*_\eps -  u^*_\eps}_{L^2(B_r)}
    \lesssim \lambda_\eps \norm{\nabla u^*_\eps}_{L^2(B_r)}
    \lesssim \lambda_\eps M_\eps^{1/2}
   \end{split}
  \end{equation*}
  Combining the inequalities above, and 
  recalling that~$\lambda_\eps\to 0$, we obtain
  \begin{equation} \label{compL2}
   \norm{\xi_\eps - u_\eps}_{L^2(B_t)}\lesssim M_\eps^{1/2}
  \end{equation}
  As a consequence, we deduce
  \begin{equation*} 
   \begin{split}
    \inf_{\zeta\in\R^m} \norm{\xi_\eps - \zeta}_{L^2(B_t)}
    &\leq \norm{\xi_\eps - u_\eps}_{L^2(B_t)}
    + \inf_{\zeta\in\R^m} \norm{u_\eps - \zeta}_{L^2(B_t)}\\
    &\lesssim M_\eps^{1/2} + \inf_{\zeta\in\R^m} \norm{u_\eps - \zeta}_{L^2(B_t)}
   \end{split}
  \end{equation*}
  and hence, by Proposition~\ref{prop:Poincare},
  \begin{equation} \label{compL2bis}
   \begin{split}
    \inf_{\zeta\in\R^m} \norm{\xi_\eps - \zeta}_{L^2(B_t)}
    &\lesssim M_\eps^{1/2} + F_\eps(u_\eps, \, B_1)^{1/2} 
    \lesssim M_\eps^{1/2}
   \end{split}
  \end{equation}
 \end{step}

 \medskip
 \begin{step}[Bounds on~$F_\eps(\xi_\eps, \, B_t)$]
  Since~$K_\eps$ is an even function (see~\eqref{hp:K_even}), we have
  \begin{equation} \label{interp7}
   \begin{split}
    F_\eps^\nl(\xi_\eps, \, B_t) &\leq F_\eps^\nl(\xi_\eps, \, B_r)
     + F_\eps^\nl(\xi_\eps, \, B_t\setminus B_r)
     + \Gamma_\eps(\xi_\eps, \, B_r, \, B_t\setminus B_r)  \\
    &\leq F_\eps^\nl(\xi_\eps, \, B_r)
     + F_\eps^\nl(\varphi_\eps* u_\eps, \, A^\prime)
     + \Gamma_\eps(\xi_\eps, \, B_r, \, B_t\setminus B_r) 
   \end{split}
  \end{equation}
  where~$\Gamma_\eps(\xi_\eps, \, B_r, B_t\setminus B_r)$ is defined 
  as in~\eqref{locdefect}. Lemma~\ref{lemma:glue} implies
  \begin{equation} \label{interp8}
   \begin{split}
    F_\eps^\nl(\varphi_\eps* u_\eps, \, A^\prime) 
    &\lesssim F_\eps^\nl(u_\eps, \, A^\prime) 
     + \frac{1}{\eps^2} \norm{\varphi_\eps *u_\eps - u_\eps}^2_{L^2(A^\prime)}
     \stackrel{\eqref{comp1}, \ \eqref{comp3}}{\lesssim}
     \sigma M_\eps
   \end{split}
  \end{equation}
  Now, we estimate~$\Gamma_\eps(\xi_\eps, \, B_r, \, B_t\setminus B_r)$.
  By construction, we have $a + \sigma < r < t < b - \sigma$ and hence
  $\partial_\sigma B_r\subseteq A^\prime$.
  We apply Lemma~\ref{lemma:locdefect},
  Lemma~\ref{lemma:glueH1} and~\eqref{compL2bis}:
  \begin{equation} \label{interp9}
   \begin{split}
    \Gamma_\eps(\xi_\eps, \, B_r, \, B_t\setminus B_r) 
    &\lesssim F_\eps^\nl(\xi_\eps, \, \partial_{\sigma/2} B_r)
    + \o\!\left(\frac{\eps^{q - 2}}{\sigma^q}\right) 
    \inf_{\zeta\in\R^m} \norm{\xi_\eps - \zeta}_{L^2(B_t)}^2 \\
    &\lesssim \int_{\partial_{\sigma} B_r} \abs{\nabla\xi_\eps}^2
    + \o\!\left(\frac{\eps^{q - 2}}{\sigma^q}\right)
    \inf_{\zeta\in\R^m} \norm{\xi_\eps - \zeta}_{L^2(B_t)}^2 \\
    &\lesssim \int_{\partial_{\sigma} B_r} \abs{\nabla\xi_\eps}^2
    + \o\!\left(\frac{\eps^{q - 2}M_\eps}{\sigma^q}\right)
   \end{split}
  \end{equation}
  The gradient term at the right-hand side can be
  further estimated by~\eqref{comp2}:
  \begin{equation} \label{interp9.5}
   \begin{split}
    \int_{\partial_{\sigma} B_r} \abs{\nabla\xi_\eps}^2
    \lesssim  \int_{B_r\setminus B_{r-\sigma}} \abs{\nabla\xi_\eps}^2
    + \int_{A^\prime} \abs{\nabla(\varphi_\eps * u_\eps)}^2
    \lesssim \int_{B_r\setminus B_{r-\sigma}} \abs{\nabla\xi_\eps}^2
    + \sigma M_\eps
   \end{split}
  \end{equation}
  Combining~\eqref{interp7}, \eqref{interp8}, \eqref{interp9}
  and~\eqref{interp9.5}, we obtain
  \begin{equation} \label{interp10}
   \begin{split}
    F^\nl_\eps(\xi_\eps, \, B_t) 
    \leq F^\nl_\eps(\xi_\eps, \, B_r)
    &+ \int_{B_r\setminus B_{r-\sigma}} \abs{\nabla\xi_\eps}^2
    + \sigma M_\eps 
    + \o\!\left(\frac{\eps^{q - 2} M_\eps}{\sigma^q}\right)
   \end{split}
  \end{equation}
  We estimate the local term of the energy, i.e.~the 
  integral of~$\psi_b(\xi_\eps)$ in~$B_t$. By construction,
  $\xi_\eps$ restricted to~$B_{r-\lambda_\eps}$ takes 
  its values in~$\NN$. As a consequence, 
  \begin{equation} \label{interp11}
   \begin{split}
    \frac{1}{\eps^2} \int_{B_t} \psi_b(\xi_\eps)
     &= \frac{1}{\eps^2} \int_{B_t\setminus B_r} \psi_b(\varphi_\eps * u_\eps)
     + \frac{1}{\eps^2} \int_{B_r \setminus B_{r - \lambda_\eps}}\psi_b(w_\eps)
     \stackrel{\eqref{comp3}, \ \eqref{competitorbulk}}{\lesssim} 
     \sigma M_\eps
   \end{split}
  \end{equation}
 \end{step}
 
 \medskip
 \begin{step}[Bounds on~$\Gamma_\eps(\xi_\eps, \, B_t, \, \R^3\setminus B_t)$]
  By construction, $\xi_\eps = u_\eps$ out of~$B_t$. Then, 
  Lemma~\ref{lemma:diffGamma} implies
  \begin{equation*}
   \begin{split}
    \Gamma_\eps(\xi_\eps, \, B_t, \, \R^3\setminus B_t)
    - \Gamma_\eps(u_\eps, \, B_t, \, \R^3\setminus B_t)
    &\lesssim F^\nl_\eps(\xi_\eps, \, \partial_\sigma B_t) 
    + \o\!\left(\frac{\eps^{q-2}}{\sigma^q}\right)
     \norm{\xi_\eps - u_\eps}_{L^2(B_t)}
   \end{split}
  \end{equation*}
  Reasoning as in~\eqref{interp8}, and applying~\eqref{comp1},
  we deduce
  \[
   F^\nl_\eps(\xi_\eps, \, \partial_\sigma B_t) \lesssim \sigma M_\eps
  \]
  Then, due to~\eqref{compL2}, we conclude that
  \begin{equation} \label{interp12}
   \begin{split}
    &\Gamma_\eps(\xi_\eps, \, B_t, \, \R^3\setminus B_t)
    - \Gamma_\eps(u_\eps, \, B_t, \, \R^3\setminus B_t) 
    \lesssim \sigma M_\eps 
    + \o\!\left(\frac{\eps^{q-2} M_\eps^{1/2}}{\sigma^q}\right)
   \end{split}
  \end{equation}
  Combining~\eqref{interp10}, \eqref{interp11} and~\eqref{interp12},
  we obtain the estimate~(iii) in the statement of the lemma, and
  the proof is complete.
  \qedhere
 \end{step} 
 }
\end{proof}

\begin{remark} \label{rk:nonlocinterp}
 {\BBB In addition to~\eqref{hp:interp3}, 
 \eqref{hp:interp1} and~\eqref{hp:interp2},
 suppose there exist points~$\zeta_\eps\in\NN$, a positive sequence
 $\eta_\eps\to 0$ and maps~$v\in H^1(B_{1/2}, \, \R^m)$,
 $v^*\in H^1(B_{1/2}, \, \R^m)$ such that~$v = v^*$ out of~$B_s$,
 $M_\eps\lesssim \eta^2_\eps$ and
 \[
  \frac{u_\eps - \zeta_\eps}{\eta_\eps} \to v \
  \textrm{strongly in } L^2(B_{1/2}), \quad 
  \frac{u^*_\eps - \zeta_\eps}{\eta_\eps} \to v^* \
  \textrm{strongly in } H^1(B_{1/2})
 \]
 as~$\eps\to 0$.
 Then, the same sequence~$\xi_\eps$ constructed above satisfies
 \[
  \frac{\xi_\eps - \zeta_\eps}{\eta_\eps} \to v^* \qquad
  \textrm{strongly in } L^2(B_r),
 \]
 \emph{so long as} we choose~$\lambda_\eps$ in a suitable way
 (see in particular Equations~\eqref{def_xi} and~\eqref{comp6}).
 For instance, instead of~\eqref{def_lambda}, we may take
 \[
  \lambda_\eps := \left(\eps^2 \frac{M_\eps}{\eta^2_\eps} 
  + \frac{1}{\sigma \eta_\eps^2} 
    \int_{A^\prime} \abs{u^*_\eps - u_\eps}^2\right)^{1/4}
 \]
 }
\end{remark}

\section{Proof of the main results}

\subsection{A compactness result for $\omega$-minimisers}

The goal of this section is to prove a compactness result for 
minimisers of~$E_\eps$, subject to variable ``boundary conditions'',
as~$\eps\to 0$. For later convenience, we state our result
in terms of ``almost minimisers''
--- or, more precisely, $\omega$-minimisers,
as defined below. This will be useful
to study variants of our original minimisation problem, 
as we will do in Section~\ref{sect:bd}.

\begin{definition} \label{def:omegamin}
 Let~$\Omega\subseteq\R^3$ be a bounded domain.
 Let~$\omega\colon [0, \, +\infty)\to [0, \, +\infty)$
 be an increasing function such that $\omega(s) \to 0$ as~$s\to 0$.
 We say that a function~$u\in L^\infty(\R^3, \, \QQ)$
 is an $\omega$-minimiser of~$E_\eps$ in~$\Omega$ if, for any 
 ball~$B_\rho(x_0)\subseteq\Omega$ and any~$v\in L^\infty(\R^3, \, \QQ)$
 such that~$v = u$ a.e.~on~$\R^3\setminus B_\rho(x_0)$,
 there holds
 \[
  E_\eps(u) \leq E_\eps(v) + \omega(\eps) \, \rho.
 \]
\end{definition}

By definition, a minimiser for~$E_\eps$ in the
class~$\mathscr{A}$ defined by~\eqref{A}
is also a~$\omega$-minimiser in~$\Omega$, for any~$\omega\geq 0$.
$\omega$-minimisers behave nicely with respect to scaling.
Given~$u\in L^\infty(\R^3, \, \QQ)$,
an increasing function~$\omega\colon [0, \, +\infty)\to [0, \, +\infty)$,
$x_0\in\R^3$ and~$\rho>0$, we define $u_\rho\colon\R^3\to\QQ$
and~$\omega_\rho\colon [0, \, +\infty)\to [0, \, +\infty)$
as~$u_\rho(y) := u(x_0 + \rho y)$ for~$y\in\R^3$ 
and~$\omega_\rho(s) := \omega(\rho s)$ for~$s\geq 0$,
respectively. A scaling argument implies

\begin{lemma} \label{lemma:scaleomega}
 If~$u$ is an~$\omega$-minimiser for~$E_\eps$ 
 in a bounded domain~$\Omega\subseteq\R^3$, then~$u_\rho$
 is an~$\omega_\rho$-minimiser for~$E_{\eps/\rho}$ in~$(\Omega - x_0)/\rho$.
\end{lemma}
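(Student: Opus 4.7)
The proof is a direct change-of-variables argument. The plan is to first establish a scaling identity for the energy, then transfer the $\omega$-minimality of $u$ on $\Omega$ to an $\omega_\rho$-minimality for $u_\rho$ on $(\Omega - x_0)/\rho$ by unscaling any candidate competitor.

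I would work with the equivalent form~\eqref{energy2}, since in the $\omega$-minimality inequality the quantities being compared differ only inside a ball, so the additive constant~$C_\eps$ plays no role. Writing $E^\Omega_\eps$ to emphasise the domain, I will perform the substitution $x = x_0 + \rho y$, $y\mapsto y$ in both terms. For the bulk integral the Jacobian is $\rho^3$, producing the factor $\rho^3/\eps^2 = \rho \cdot (\eps/\rho)^{-2}$, and transforming the integration set from $\Omega$ to $(\Omega - x_0)/\rho$. For the non-local term the double Jacobian is $\rho^6$, and the homogeneity identity
\[
 K_\eps(x - y) = \eps^{-3} K\bigl(\eps^{-1}\rho(\tilde x-\tilde y)\bigr)
 = \rho^{-3} K_{\eps/\rho}(\tilde x - \tilde y)
\]
absorbs three of those powers, leaving overall factor $\rho^3/\eps^2 = \rho \cdot (\eps/\rho)^{-2}$. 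Thus for any two maps $u,v$ that differ only on a ball contained in~$\Omega$,
\[
 E^{\Omega}_\eps(u) - E^{\Omega}_\eps(v)
  = \rho \, \Bigl( E^{(\Omega-x_0)/\rho}_{\eps/\rho}(u_\rho)
      - E^{(\Omega-x_0)/\rho}_{\eps/\rho}(v_\rho) \Bigr) \! .
\]

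Next, given any ball $B_{\tilde r}(\tilde z_0) \subseteq (\Omega - x_0)/\rho$ and any $\tilde v\in L^\infty(\R^3, \, \QQ)$ with $\tilde v = u_\rho$ off $B_{\tilde r}(\tilde z_0)$, I would define $v(x) := \tilde v((x-x_0)/\rho)$. Then $v = u$ outside the ball $B_{\rho\tilde r}(x_0 + \rho \tilde z_0) \subseteq \Omega$, so the $\omega$-minimality of $u$ in $\Omega$ applied to this competitor yields
\[
 E^{\Omega}_\eps(u) \leq E^{\Omega}_\eps(v) + \omega(\eps) \cdot (\rho \tilde r).
\]
Dividing by $\rho$ and invoking the scaling identity converts this into
\[
 E^{(\Omega-x_0)/\rho}_{\eps/\rho}(u_\rho)
 \leq E^{(\Omega-x_0)/\rho}_{\eps/\rho}(\tilde v) + \omega(\eps)\,\tilde r.
\]
Since $\omega_\rho(\eps/\rho) = \omega(\rho \cdot \eps/\rho) = \omega(\eps)$ by definition, this is precisely the $\omega_\rho$-minimality of $u_\rho$ in $(\Omega - x_0)/\rho$ at scale $\eps/\rho$, which is what we wanted.

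There is essentially no substantive obstacle; the proof is pure bookkeeping of scaling factors. The two sanity checks worth highlighting are: (a)~that both terms of~\eqref{energy2} scale with the same overall power of~$\rho$, so the identity holds cleanly at the level of the functional difference; and (b)~that the error term $\omega(\eps)\cdot r$ in Definition~\ref{def:omegamin}, being linear in the ball radius, is compatible with this factor of $\rho$ — indeed, this linear dependence on~$r$ is the very reason the scaling yields $\omega_\rho$ with the same $\omega$ evaluated at the rescaled argument rather than some different modulus.
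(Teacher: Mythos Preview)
Your argument is correct and is exactly the scaling computation the paper alludes to (the paper gives no detailed proof, merely stating that ``a scaling argument implies'' the lemma). Your careful bookkeeping of the factors of~$\rho$ in both the non-local and bulk terms, together with the observation that the error $\omega(\eps)\,r$ scales linearly in the radius, is precisely what is needed.
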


\begin{proposition} \label{prop:compactness}
 Let~$\Omega\subseteq\R^3$ be a bounded domain.
 Let~$\omega\colon [0, \, +\infty)\to [0, \, +\infty)$
 be an increasing function such that $\omega(s) \to 0$ as~$s\to 0$.
 Let~$u_\eps$ be a sequence of $\omega$-minimisers
 of~$E_\eps$ in~$\Omega$. {\BBB Suppose that there exists an open,
 convex set~$\QQ_0\csubset\QQ$ such that~$u_\eps(x)\in\QQ_0$ 
 for any~$\eps>0$ and a.e.~$x\in\Omega$.}
 Let~$B_\rho(x_0)\subseteq\Omega$ be a ball
 such that $\sup_{\eps>0} F_\eps(u_\eps, \, B_\rho(x_0))<+\infty$.
 Then, up to extraction of a non-relabelled subsequence,
 $u_\eps$ converge $L^2(B_{\rho/2}(x_0))$-strongly to a
 map~$u_0\in H^1(B_{\rho/2}(x_0), \, \NN)$, which minimises the functional
 \[
  w\in H^1(B_{\rho/2}(x_0), \, \NN) \mapsto
  \int_{B_{\rho/2}(x_0)} L\nabla w \cdot \nabla w
 \]
 subject to its own boundary conditions. 
 Moreover, for any~$s\in(0, \, \rho/2)$ there holds
 \begin{equation} \label{strongconv}
  \lim_{\eps\to 0} F_\eps(u_\eps, \, B_s(x_0)) 
  = \int_{B_s(x_0)} L\nabla u_0\cdot\nabla u_0
 \end{equation}
\end{proposition}

Proposition~\ref{prop:compactness} differs from the 
results in~\cite{taylor2018oseen} in that no 
``boundary condition'' is prescribed: each~$u_\eps$ 
minimises the functional~$E_\eps$
(possibily up to a small error, which is
quantified by the function~$\omega$) subject to 
its own ``boundary condition''. 

\begin{proof}[Proof of Proposition~\ref{prop:compactness}]
 If~$u$ is an~$\omega$-minimiser of~$E_\eps$ in~$\Omega$
 and~$B_\rho(x_0)\subseteq\Omega$, then~$u_\rho$
 is an~$\omega_\rho$-minimiser for~$E_{\eps/\rho}$ in~$(\Omega - x_0)/\rho$.
 Since we have assumed that~$\Omega$ is bounded,
 the radius~$\rho$ is bounded too --- say, $\rho\leq R_0$,
 where~$R_0$ depends only on~$\Omega$. The function~$\omega$
 is increasing, so~$\omega_\rho(s) = \omega(\rho s)
 \leq\omega(R_0 s) =: \omega_0(s)$. As a consequence, 
 $u_\rho$ is also an~$\omega_0$-minimiser. Since~$\omega_0$
 is independent of~$\rho$, by a scaling argument
 (see Equation~\eqref{scaling}) we may assume without 
 loss of generality that~$\rho = 1$ and~$x_0 = 0$.
 
  Let~$\varphi_\eps\in C^{\infty}_{\mathrm{c}}(\R^3)$ 
  be defined as in Section~\ref{sect:Poincare}. 
  Lem\-ma~\ref{lemma:mollify-Morrey} and Lemma~\ref{lemma:mollify-L2}
  imply that
  \begin{equation*} 
   \int_{B_{1/2}} \abs{\nabla(\varphi_\eps * u_\eps)}^2
   \lesssim F(u_\eps, \, B_1), \qquad
   \int_{B_{1/2}} \abs{\varphi_\eps * u_\eps - u_\eps}^2
   \lesssim \eps^2 F(u_\eps, \, B_1)
  \end{equation*}
  for~$\eps$ small enough. Since~$F(u_\eps, \, B_1)$ is bounded,
  we can extract a (non-relabelled) subsequence so that
  $\varphi_\eps * u_\eps\rightharpoonup u_0$ weakly in~$H^1(B_{1/2})$,
  $u_\eps\to u_0$ strongly in~$L^2(B_{1/2})$. The map~$u_0$
  takes its values in~$\NN$, because
  \[
   \int_{B_{1/2}} \psi_b(u_0) 
   \leq \liminf_{\eps\to 0} \eps^2 \int_{B_{1/2}} \psi_b(u_\eps) =0 
  \]
  by Fatou lemma. We must show that
  \begin{equation} \label{comp-min}
   \int_{B_{1/2}} L\nabla u_0\cdot\nabla u_0 
    \leq \int_{B_{1/2}} L\nabla v\cdot\nabla v
  \end{equation}
  for any~$v\in H^1(B_{1/2}, \, \NN)$ such that $v = u_0$
  on~$\partial B_{1/2}$. By an approximation argument, 
  it suffices to prove~\eqref{comp-min} in case~$v = u_0$
  in a neighbourhood of~$\partial B_{1/2}$.
  Therefore, we fix~$s\in (0, \, 1/2)$ and we 
  take a map~$v\in H^1(B_{1/2}, \, \NN)$ such that $v = u_0$
  on~$B_{1/2}\setminus\bar{B}_s$. The map~$v$ is not an admissible
  competitor for~$u_\eps$, because in general~$u_0 \neq u_\eps$
  on~$\R^3\setminus B_1$. However, we may construct suitable
  competitors by applying Lemma~\ref{lemma:nonlocinterp}.
  {\BBB We can indeed do so, because we have assumed that 
  all the~$u_\eps$'s take their values
  in an open, convex set~$\QQ_0\csubset\QQ$.}
  
  {\BBB Let~$\sigma\in (0, \, 1/10)$.
  By applying Lemma~\ref{lemma:nonlocinterp} (with~$u_\eps^* = v$ for any~$\eps$),
  we find maps~$\xi_\eps\in L^\infty(\R^3, \, \QQ_0)$ and radii~$r$, $t$
  with~$\max(s, \, 1/4) < r < t < 1/2$, so that 
  $\xi_\eps = u_\eps$ a.e.~in~$\R^3\setminus B_t$, 
  $\xi_\eps\to v$ strongly in~$H^1(B_r)$ and
  \begin{equation*} 
   \begin{split}
     F_\eps(\xi_\eps, \, B_t) 
      &+ \Gamma_\eps(\xi_\eps, \, B_t, \, \R^3\setminus B_t)
      - \Gamma_\eps(u_\eps, \, B_t, \, \R^3\setminus B_t) \\
     &\leq F_\eps^\nl(\xi_\eps, \, B_r) 
      + C\int_{B_r\setminus B_{r-\sigma}}\abs{\nabla\xi_\eps}^2
      + C\sigma + \o\!\left(\frac{\eps^{q-2}}{\sigma^q}\right)
   \end{split}
  \end{equation*}
  (where~$q>7/2$ is given by~\eqref{hp:g_decay}).
  Since~$u_\eps$ is an~$\omega$-minimiser for~$E_\eps$
  and~$\xi_\eps = u_\eps$ out of~$B_t$, we have
  \begin{equation*} 
   \begin{split}
    F_\eps(u_\eps, \, B_t) 
    + \Gamma_\eps(u_\eps, \, B_t, \, \R^3\setminus B_t)
    &\leq F_\eps(\xi_\eps, \, B_t) 
    + \Gamma_\eps(\xi_\eps, \, B_t, \, \R^3\setminus B_t) + \omega(\eps) 
   \end{split}
  \end{equation*}
  and hence, 
  \begin{equation} \label{comp7}
   \begin{split}
    F_\eps(u_\eps, \, B_t) 
    &\leq F_\eps^\nl(\xi_\eps, \, B_r) 
    + C\int_{B_r\setminus B_{r-\sigma}}\abs{\nabla\xi_\eps}^2
    + C\sigma + \o\!\left(\frac{\eps^{q-2}}{\sigma^q}\right) + \omega(\eps)
   \end{split}
  \end{equation}
  We apply Proposition~\ref{prop:Gamma-liminf} and 
  Proposition~\ref{prop:Gamma-limsup} to pass to the limit 
  in both sides of~\eqref{comp7},
  first as~$\eps\to 0$, then as~$\sigma\to 0$. We obtain
  \begin{equation*} 
   \int_{B_r} L\nabla u_0\cdot\nabla u_0 
   \leq \int_{B_r} L\nabla v\cdot\nabla v
  \end{equation*} 
  which implies~\eqref{comp-min}. In case~$v = u_0$,
  the same argument shows that 
  \begin{equation} \label{comp-lim1}
   \begin{split}
    \limsup_{\eps\to 0} F_\eps(u_\eps, \, B_r)
    \leq \int_{B_r} L\nabla u_0\cdot\nabla u_0
   \end{split}
  \end{equation}
  On the other hand, Proposition~\ref{prop:Gamma-liminf} implies
  \begin{equation} \label{comp-lim2}
   \begin{split}
    \int_{G} L\nabla u_0\cdot\nabla u_0 
    \leq \liminf_{\eps\to 0} F_\eps(u_\eps, \, G) \qquad
    \textrm{for any open set } G\subseteq B_{1/2}
   \end{split}
  \end{equation}
  Combining~\eqref{comp-lim1} with~\eqref{comp-lim2},
  we deduce~\eqref{strongconv}. }
\end{proof}

\subsection{A decay lemma for~$F_\eps$}

The aim of this section is to prove a decay property 
for the localised energy~$F_\eps$:

\begin{lemma} \label{lemma:decay}
 {\BBB There exist~$\eta>0$, $\theta\in (0, \, 1/2)$
 and~$\eps_*>0$ such that, for any
 ball~$B_{\rho}(x_0)\subseteq\Omega$, any~$\eps\in (0, \, \eps_*\rho)$
 and any minimiser~$u_\eps$ of~$E_\eps$ in~$\mathscr{A}$
 such that 
 \[
  F_\eps(u_\eps, \, B_{\rho}(x_0)) 
  \leq \eta^2 \rho,
 \]
 there holds
 \begin{equation} \label{decay}
  \begin{split}
   & F_\eps(u_\eps, \, B_{\theta\rho}(x_0))
   \leq \frac{\theta}{2} F_\eps(u_\eps, \, B_{\rho}(x_0))
   + \left(\frac{\eps}{\rho}\right)^{2q-4} \rho.
  \end{split}
 \end{equation} }
\end{lemma}

{\BBB Compared with analogous results in
the regularity theory for Oseen-Frank minimisers ---
see, for instance, Proposition~1 in~\cite{Luckhaus} or 
Theorem~2.4 in~\cite{HKL} --- the estimate~\eqref{decay} contains 
an extra term at the right-hand side. This additional term
controls the contibutions from the `locality defect'~$\Gamma_\eps$,
defined by~\eqref{locdefect} (cf. Lemma~\ref{lemma:diffGamma}
and Remark~\ref{rk:diffGamma}).
However, this term will introduce additional issues 
in the proof of Theorem~\ref{th:Holder}, which we are only 
able to resolve in case~$q>7/2$.}

As a first step towards the proof of Lemma~\ref{lemma:decay},
we check that the limit tensor~$L$ (defined by~\eqref{L})
is elliptic.

\begin{proposition} \label{prop:L}
 There exists a constant~$\lambda>1$ so that 
 \begin{equation*}
  \lambda^{-1} |\xi|^2 \leq  L\xi\cdot \xi \leq \lambda|\xi|^2
 \end{equation*}
 for all $\xi\in\mathbb{R}^{m\times 3}$. 
\end{proposition}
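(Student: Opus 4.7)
The plan is to prove the two inequalities separately: the upper bound is essentially a direct consequence of assumption~\eqref{hp:lambda_max} together with the finite second moment of~$g$, while the lower bound requires exploiting the non-degeneracy of~$g$ on an annulus together with a symmetry argument.

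For the upper bound, I would observe that, by definition of the largest eigenvalue, one has pointwise
\[
 K(z)\cdot(\xi z)^{\otimes 2} \leq \lambda_{\max}(K(z))\,\abs{\xi z}^2 \leq C\,g(z)\,\abs{\xi}^2\abs{z}^2,
\]
where the second inequality uses~\eqref{hp:lambda_max}. Integrating over~$\R^3$ and invoking~\eqref{hp:g_decay}, which guarantees $\int g(z)\abs{z}^2\,\d z<+\infty$, yields $L\xi\cdot\xi\leq\lambda\abs{\xi}^2$ for a suitable constant~$\lambda>0$.

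For the lower bound, which is the main point, I would restrict the integral defining $L\xi\cdot\xi$ to the annulus $B_{\rho_2}\setminus B_{\rho_1}$ provided by~\eqref{hp:g}, and use the pointwise inequality
\[
 K(z)\cdot(\xi z)^{\otimes 2} \geq \lambda_{\min}(K(z))\,\abs{\xi z}^2 = g(z)\,\abs{\xi z}^2 \geq k\,\abs{\xi z}^2
\]
valid a.e.~on this annulus. Hence
\[
 L\xi\cdot\xi \geq \frac{k}{4}\int_{B_{\rho_2}\setminus B_{\rho_1}} \abs{\xi z}^2\,\d z.
\]
To evaluate the remaining integral, I would use the rotational symmetry of the annulus: writing $\abs{\xi z}^2 = \xi_{ij}\xi_{ik}\,z_j z_k$ and noting that $\int_{B_{\rho_2}\setminus B_{\rho_1}} z_j z_k\,\d z = \frac{\delta_{jk}}{3}\int_{B_{\rho_2}\setminus B_{\rho_1}}\abs{z}^2\,\d z$, one obtains
\[
 \int_{B_{\rho_2}\setminus B_{\rho_1}} \abs{\xi z}^2\,\d z = \frac{\abs{\xi}^2}{3}\int_{B_{\rho_2}\setminus B_{\rho_1}}\abs{z}^2\,\d z,
\]
and the last integral is a strictly positive constant depending only on $\rho_1$, $\rho_2$. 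Combining the two bounds and choosing $\lambda$ large enough to cover both sides gives the claim.

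I do not expect serious obstacles here; the only point requiring slight care is the symmetry identity for the second moment of~$z$ on the annulus, but this is a standard consequence of the fact that the domain is invariant under orthogonal transformations, forcing $\int z_j z_k$ to be a multiple of the identity and the trace fixing the constant.
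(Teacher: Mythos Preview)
Your proposal is correct and follows essentially the same route as the paper: the upper bound via~\eqref{hp:lambda_max} and the finite second moment of~$g$, and the lower bound by restricting to the annulus from~\eqref{hp:g} and computing $\int z_j z_k$ by symmetry. The only point you leave implicit, which the paper makes explicit, is that $g\geq 0$ everywhere (part of~\eqref{hp:g}) is what justifies discarding the integral over the complement of the annulus when passing to the lower bound.
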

\begin{proof}
 The upper bound comes trivially, as 
 \begin{equation*}
  \begin{split}
   4L\xi\cdot \xi = \int_{\mathbb{R}^3}K(z)(\xi z)\cdot (\xi z)\,\d z
   \lesssim \int_{\mathbb{R}^3}g(z)|\xi z|^2\,\d z
   \lesssim \left(\int_{\mathbb{R}^3}g(z)|z|^2\,\d z\right)|\xi|^2
  \end{split}
 \end{equation*}
 and the constant at the right-hand side is finite, due to~\eqref{hp:g_decay}.
 For the lower bound, recall that $g$ is non-negative and satisfies $g(z)\geq k$ for $\rho_1<|z|<\rho_2$. Then we have that 
 \begin{equation*}
  \begin{split}
   4L\xi\cdot \xi = \int_{\mathbb{R}^3} K_{ij}(z) z_\alpha z_\beta \, \xi_{i,\alpha} \, \xi_{j,\beta}\,\d z
   &\geq \int_{\mathbb{R}^3} g(z)z_\alpha z_\beta \, \xi_{i,\alpha} \, \xi_{i,\beta}\,\d z \\
   &\geq k \int_{B_{\rho_2}\setminus B_{\rho_1}}z_\alpha z_\beta\,\d z \, \xi_{i,\alpha} \, \xi_{i,\beta}
  \end{split}
 \end{equation*}
 We may evaluate the inner integral as 
 \begin{equation*}
  \begin{split}
   \int_{B_{\rho_2}\setminus B_{\rho_1}} z_\alpha z_\beta\,\d z
   =& \int_{\rho_1}^{\rho_2} \int_{\mathbb{S}^2} r^2p_\alpha p_\beta \,\d p\,\d r\\
   =&\int_{\rho_1}^{\rho_2}r^2\,\d r \int_{\mathbb{S}^2} p_\alpha p_\beta \,\d p\\
   =& \left(\frac{\rho_2^3-\rho_1^3}{3}\right)\frac{4\pi}{3}\delta_{\alpha\beta}
  \end{split}
 \end{equation*}
This gives a lower bound on the bilinear form as 
 \begin{equation*}
  L\xi\cdot \xi \geq \frac{k\pi(\rho_2^3-\rho_1^3)}{9}\delta_{\alpha\beta}\xi_{i\alpha}\xi_{i\beta}=\frac{k\pi(\rho_2^3-\rho_1^3)}{9}|\xi|^2 \qedhere
 \end{equation*}
\end{proof}

{\BBB
We will prove Lemma~\ref{lemma:decay} by blow-up,
adapting Luckhaus' arguments from~\cite{Luckhaus}. By a scaling argument
(see~\eqref{scaling}), we may assume without loss of generality that
$x_0=0$ and~$\rho=1$.
Suppose, towards a contradiction, that Lemma~\ref{lemma:decay}
does not hold. Then, for any choice of the parameter~$\theta\in (0, \, 1/2)$,
we find a sequence~$\eps_j\to 0$ and minimisers~$u_j$ such that
\begin{gather}
 \eta_j^2 := F_{\eps_j}(u_j, \, B_1) \to 0 
  \qquad \textrm{as } j\to+\infty, \label{smallenergy} \\
 F_{\eps_j} (u_j, \, B_\theta) > \frac{\theta\eta_j^2}{2} + \eps_j^{2q-4}
  \label{nodecay}
\end{gather}
We denote by~$\T_\zeta\NN$ the tangent space to the manifold~$\NN$
at a point~$\zeta\in\NN$ (regarded as a linear subspace of~$\R^m$,
i.e.~$0\in\T_{\zeta}\NN$). We recall that, for any point~$y\in\R^m$
sufficiently close to the manifold~$\NN$, there exists a unique
point~$\pi(y)\in\NN$ such that~$\dist(y, \, \NN) = \abs{y - \pi(y)}$.
Moreover, there exists a constant~$\delta_*(\NN) > 0$ such that
the map~$y \mapsto \pi(y)$ is smooth (with bounded derivatives) in
\begin{equation} \label{nearestproj}
 \mathscr{U} :=\{z\in\R^m\colon \dist(z, \, \NN)\leq \delta_*(\NN)\}
\end{equation} 
(see e.g.~\cite[Section~2.12.3]{Simon-Harmonic}).

\begin{lemma} \label{lemma:average}
 For any~$j\in\N$ large enough, there exists a 
 constant~$\zeta_j\in\NN$ and a measurable 
 set~$G_j\subseteq B_{1/2}$ such that 
 \begin{gather}
  \int_{B_{1/2}} \abs{u_j(x) - \zeta_j}^2 \, \d x
   \lesssim \eta_j^2 \label{average1} \\
  \eta_j^{-1} \int_{B_{1/2}\setminus G_j} \dist(u_j(x) - \zeta_j, \, \T_{\zeta_j}\NN) 
   \, \d x \to 0 \label{average2}
 \end{gather}
 and~$\abs{G_j}\to 0$ as~$j\to+\infty$.
\end{lemma}
\begin{proof}
 Let~$\tilde{\zeta}_j := \fint_{B_{1/2}} u_j$. 
 By the Poincar\'e-type inequality,
 Proposition~\ref{prop:Poincare}, we have
 \begin{equation} \label{avrg1}
  \int_{B_{1/2}} \abs{u_j - \tilde{\zeta}_j}^2 
   \lesssim F_{\eps_j}(u_j, \, B_1) 
   \stackrel{\eqref{smallenergy}}{=} \eta_j^2.
 \end{equation}
 On the other hand, Proposition~\ref{prop:physicality} 
 and Lemma~\ref{lemma:psib} imply
 \begin{equation} \label{avrg2}
  \psi_b(\tilde{\zeta}_j) \lesssim
  \fint_{B_{1/2}} \psi_b(u_j) 
  + \fint_{B_{1/2}} \abs{u_j - \tilde{\zeta}_j}^2 
  \stackrel{\eqref{avrg1}}{\lesssim} 
  (\eps_j^2 + 1) \eta_j^2 \to 0.
 \end{equation}
 In particular, the distance between~$\tilde{\zeta}_j$ and~$\NN$
 tends to zero as~$j\to+\infty$. Then, for~$j$ large enough,
 the projection~$\zeta_j := \pi(\tilde{\zeta}_j)\in\NN$ is well-defined.
 Moreover, recalling~\eqref{nondeg}, we have
 \begin{equation} \label{avrg3}
  \abs{\zeta_j - \tilde{\zeta}_j}^2 = \dist^2(\tilde{\zeta}_j, \, \NN)
  \lesssim \psi_b(\tilde{\zeta}_j) \stackrel{\eqref{avrg2}}{\lesssim} \eta_j^2.
 \end{equation}
 The estimate~\eqref{average1} follows from~\eqref{avrg1} and~\eqref{avrg3}.
 
 Let us consider the set
 \[
  G_j := \left\{x\in B_{1/2}\colon \dist(u_j(x), \, \NN) 
  \geq \delta_*(\NN) \right\}
 \]
 where~$\delta_*(\NN)$ is the constant from~\eqref{nearestproj}.
 On the set~$G_j$, the function~$\psi_b(u_j)$ is bounded from
 below by a strictly positive constant, which depends only on~$\delta_*(\NN)$
 and~$\psi_b$. Then,
 \[
  \abs{G_j} \lesssim \int_{B_1} \psi_b(u_j) 
   \lesssim \eps_j^2 \, F_{\eps_j}(u_j, \, B_1) 
   = \eps_j^2 \, \eta_j^2  \to 0.
 \]
 Moreover, the projection~$\pi\circ u_j$ is well-defined
 on~$B_{1/2}\setminus G_j$ and
 \[
  \begin{split}
   \int_{B_{1/2}\setminus G_j} \dist(u_j - \zeta_j, \, \T_{\zeta_j}\NN)
   &\leq \int_{B_{1/2}\setminus G_j} \dist(\pi\circ u_j - \zeta_j, \, \T_{\zeta_j}\NN)
    + \int_{B_{1/2}\setminus G_j} \abs{u_j - \pi\circ u_j} \\
   &\stackrel{\eqref{nondeg}}{\lesssim} \int_{B_{1/2}\setminus G_j} 
    \dist(\pi\circ u_j - \zeta_j, \, \T_{\zeta_j}\NN)
    + \int_{B_{1}} \psi_b^{1/2}(u_j) \\
   &\lesssim \int_{B_{1/2}\setminus G_j} 
    \dist(\pi\circ u_j - \zeta_j, \, \T_{\zeta_j}\NN)
    + \eps_j \, \eta_j \\ 
  \end{split}
 \]
 Therefore, in order to prove~\eqref{average2}, it suffices to show that
 \begin{equation} \label{average3}
  \eta_j^{-1} \int_{B_{1/2}\setminus G_j} 
    \dist(\pi\circ u_j - \zeta_j, \, \T_{\zeta_j}\NN) \to 0 
    \qquad \textrm{as } j\to+\infty.
 \end{equation}
 To this end, we fix a large number~$M > 0$ and we consider the sets
 \[
  A_j := \left\{x\in B_{1/2}\setminus G_j\colon
   \abs{(\pi\circ u_j)(x) - \zeta_j} \leq M\eta_j \right\} \!,
   \qquad B_j := B_{1/2}\setminus (G_j\cup A_j).
 \]
 We first estimate the contribution from the set~$A_j$.
 Since the manifold $\NN$ is compact and smooth, we have
 \begin{equation} \label{avrg4}
  \dist(y-z, \, \T_{z}\NN) \lesssim \abs{y - z}^2
  \qquad \textrm{for any } y\in\NN, \ z\in\NN
 \end{equation}
 (For~$y$ sufficiently close to~$z$,
 say~$\abs{y - z}\leq\eta_0 = \eta_0(\NN)$,
 the inequality~\eqref{avrg4} can be obtained 
 by writing~$\NN$ as the graph of a smooth function,
 locally around~$z$, and using a Taylor expansion. 
 If~$\abs{y - z}\geq\eta_0$, we remark that the left-hand
 side of~\eqref{avrg4} is bounded from above because~$\NN$ is compact.) 
 Then,
 \begin{equation}\label{avrg5}
  \begin{split}
   \eta_j^{-1} \int_{A_j} \dist(\pi\circ u_j - \zeta_j, \, \T_{\zeta_j}\NN)
    \lesssim \eta_j^{-1} \int_{A_j} |\pi\circ u_j - \zeta_j|^2 
    \leq M^2 \eta_j \abs{A_j} \to 0
  \end{split}
 \end{equation}
 as~$j\to+\infty$. Now, we estimate the contribution from~$B_j$.
 By construction, the image~$u_j(B_{1/2}\setminus G_j)$ is contained
 in the neighbourhood~$\mathscr{U}$ given by~\eqref{nearestproj}.
 Moreover, $\pi$ is Lipschitz-continuous on~$\mathscr{U}$
 and~$\pi(\zeta_j) = \zeta_j$, so
 \begin{equation} \label{avrg5.5}
  \abs{(\pi\circ u_j)(x) - \zeta_j} \lesssim
  \abs{u_j(x) - \zeta_j} \qquad \textrm{for a.e. } x\in B_{1/2}\setminus G_j
 \end{equation}
 By definition, we have $\abs{\pi\circ u_j - \zeta_j}\geq M\eta_j$
 on~$B_j$ and hence, 
 \begin{equation} \label{avrg6}
  \abs{B_j} \lesssim M^{-2} \eta_j^{-2} 
   \int_{B_{1/2}\setminus G_j} \abs{\pi\circ u_j - \zeta_j}^2
   \stackrel{\eqref{avrg5.5}}{\lesssim} M^{-2} \eta_j^{-2} 
   \int_{B_{1/2}\setminus G_j} \abs{u_j - \zeta_j}^2
   \stackrel{\eqref{average1}}{\lesssim} M^{-2}
 \end{equation}
 By applying the inequality $\dist(y-z, \, \T_{z}\NN)\leq |y - z|$,
 which holds for any~$y\in\NN$, $z\in\NN$, we obtain
 \begin{align*}
  \eta_j^{-1} \int_{B_j} \dist(\pi\circ u_j - \zeta_j, \, \T_{\zeta_j}\NN) 
   \leq \eta_j^{-1} \int_{B_j} \abs{\pi\circ u_j - \zeta_j} 
  \stackrel{\eqref{avrg5.5}}{\lesssim}
   \eta_j^{-1} \int_{B_j} \abs{u_j - \zeta_j} 
  \end{align*}
  and hence,
  \begin{equation} \label{avrg7}
  \eta_j^{-1} \int_{B_j} \dist(\pi\circ u_j - \zeta_j, \, \T_{\zeta_j}\NN) 
   \lesssim \eta_j^{-1} \abs{B_j}^{1/2} 
   \left(\int_{B_{1/2}} \abs{u_j - \zeta_j}^2\right)^{1/2}
   \stackrel{\eqref{average1}, \, \eqref{avrg6}}{\lesssim}  M^{-1}
  \end{equation}
  By combining~\eqref{avrg5} with~\eqref{avrg7}, and passing 
  to the limit as~$M\to+\infty$, we deduce~\eqref{average3}.
\end{proof}

Let~$\zeta_j\in\NN$, $G_j\subseteq B_{1/2}$ be as in Lemma~\ref{lemma:average}.
We consider the maps $v_j\colon B_{1/2}\to\R^3$ given by
\[
 v_j := \frac{1}{\eta_j} \left(u_j - \zeta_j\right)
\]
Thanks to~\eqref{average1}, the sequence~$v_j$
is bounded in~$L^2(B_{1/2})$.

\begin{lemma} \label{lemma:L2conv}
 There exist a (non-relabelled) subsequence, a point~$\zeta\in\NN$
 and a map~$v\in H^1(B_{1/2}, \, \R^m)$ such that $\zeta_j\to \zeta$,
 $v_j\to v$ strongly in~$L^2(B_{1/2})$ and a.e.~as~$j\to+\infty$.
 Moreover,
 \begin{equation} \label{tg}
  v(x)\in \T_\zeta\NN \qquad \textrm{for a.e. } x\in B_{1/2}.
 \end{equation}
\end{lemma}
\begin{proof}
 Let~$\varphi_{\eps_j}$ be a sequence of mollifiers, as in~\eqref{mollify}.
 Since~$\zeta_j$ is constant, we have $\varphi_\eps*\zeta_j = 
 \zeta_j \int_{\R^3} \varphi_{\eps_j} = \zeta_j$ and hence,
 by Lemma~\ref{lemma:mollify-Morrey},
 \[
  \int_{B_{1/2}} \abs{\nabla (\varphi_{\eps_j}*v_j)}^2
  = \frac{1}{\eta_j^2} \int_{B_{1/2}} \abs{(\nabla \varphi_{\eps_j})*u_j}^2 
  \lesssim \frac{F_{\eps_j}(u_j, \, B_1)}{\eta_j^2} 
  \stackrel{\eqref{smallenergy}}{=} 1
 \]
 Moreover, by Lemma~\ref{lemma:mollify-L2},
 \[
  \int_{B_{1/2}} \abs{v_j - \varphi_{\eps_j}*v_j}^2
  = \frac{1}{\eta_j^2} \int_{B_{1/2}} \abs{u_j - \varphi_{\eps_j}*u_j}^2
  \lesssim \frac{\eps_j^2 \, F_{\eps_j}(u_j, \, B_1)}{\eta_j^2} 
  \stackrel{\eqref{smallenergy}}{=} \eps_j^2 \to 0
 \]
 Then, we may extract a subsequence in such a way that
 $\varphi_{\eps_j}*v_j\rightharpoonup v$ weakly in~$H^1(B_{1/2})$
 and~$v_j \to v$ strongly in~$L^2(B_{1/2})$ and a.e.
 We may also assume that~$\zeta_j\to \zeta\in\NN$, because~$\NN$ is compact.
 
 It remains to prove~\eqref{tg}. Let~$\delta > 0$ be a small parameter.
 By Lemma~\ref{lemma:average}, we know that~$\abs{G_j} \to 0$,
 so we may extract a subsequence~$j_k\to+\infty$ in such a way that
 $\sum_{k\in\N} |G_{j_k}| \leq \delta$. Let~$G := \cup_{k\in\N} G_{j_k}$.
 The estimate~\eqref{average2} implies
 \[
  \int_{B_{1/2}\setminus G} \dist(v_{j_k}, \, \T_{\zeta_{j_k}}\NN)
  \leq \int_{B_{1/2}\setminus G_{j_k}} \dist(v_{j_k}, \, \T_{\zeta_{j_k}}\NN) \to 0
 \]
 as~$k\to+\infty$. By Fatou lemma, we deduce that~$v(x)\in\T_{\zeta}\NN$
 for a.e.~$x\in B_{1/2}\setminus G$. This implies
 \[
  \abs{\left\{x\in B_{1/2}\colon v(x)\notin\T_\zeta\NN\right\}}
  \leq \abs{G} \leq \sum_{k\in\N} \abs{G_{j_k}} \leq \delta.
 \]
 Since~$\delta>0$ is arbitrary, \eqref{tg} follows.
\end{proof}

Next, we show that~$v$ minimises the gradient energy 
associated with the tensor~$L$, subject to its own boundary conditions.

\begin{lemma} \label{lemma:minim}
 Let~$v^*\in H^1(B_{1/2}, \, \T_\zeta\NN)$ be such that
 $v^* = v$ on~$\partial B_{1/2}$ (in the sense of traces).
 Then,
 \begin{equation} \label{blmin}
  \int_{B_{1/2}} L\nabla v\cdot \nabla v
   \leq \int_{B_{1/2}} L\nabla v^*\cdot\nabla v^*
 \end{equation}
 Moreover, for any~$s\in(0, \, 1/2)$ there holds
 \begin{equation} \label{blstrongconv}
  \lim_{\eps\to 0} \eta_j^{-2} F_{\eps_j}(u_j, \, B_s) 
  = \int_{B_s} L\nabla v\cdot\nabla v 
 \end{equation}
\end{lemma}
\begin{proof}
 By a density argument, it suffices to prove~\eqref{blmin}
 in case $v^* = v$ in a neighbourhood of~$\partial B_{1/2}$.
 Let us fix~$s\in (0, \, 1/2)$ and~$v^*\in H^1(B_{1/2}, \, \T_{\zeta}\NN)$
 such that~$v^*=v$ a.e.~in~$B_{1/2}\setminus\bar{B}_s$.
 For any~$a>0$, we let $\mathscr{U}_a := \{y\in\R^m\colon 
 \dist(y, \, \NN)\leq a\}$. Let~$z\in\NN$, $R>0$ and~$w\in\T_z\NN$
 be such such that~$\abs{w}\leq R$. Since~$\nabla\pi(z)$ is the 
 orthogonal projection onto~$\T_z\NN$ 
 (see e.g.~\cite[Section~2.12.3]{Simon-Harmonic}), we have
 \begin{equation} \label{pi1}
  \abs{z + \eta_jw - \pi(z - \eta_jw)} 
  \lesssim  \eta_j^2 \, R^2 \, \|\nabla^2\pi\|_{L^\infty(\mathscr{U}_{\eta_j R})} 
 \end{equation}
 Moreover, for any~$X\in\T_z\NN$ we have
 \begin{equation} \label{pi2}
  \abs{X - \nabla\pi(z + \eta_jw)X}
  \leq \norm{\nabla\pi(z) - \nabla\pi(z+\eta_jw)}\abs{X}
  \lesssim \eta_j R \abs{X}\, 
  \|\nabla^2\pi\|_{L^\infty(\mathscr{U}_{\eta_j R})} 
 \end{equation}
 We choose a positive sequence~$R_j\to+\infty$ such that~$\eta_j R_j^2\to 0$
 and we define
 \[
  v^*_j := \frac{R_jv^*}{\max(R_j, \, \abs{v^*})}, \qquad
  u^*_j := \pi(\zeta_j + \eta_j v^*_j).
 \]
 Using~\eqref{pi1} and~\eqref{pi2}, a routine computation shows that
 \[
  \frac{u^*_j - \zeta_j}{\eta_j} \to v^*
  \qquad \textrm{strongly in } H^1(B_{1/2})
 \]
 Now, we apply Lemma~\ref{lemma:nonlocinterp} (and Remark~\ref{rk:nonlocinterp}).
 For any~$\sigma\in (0, \, 1/10)$, we find radii~$r$, $t$ 
 with~$\max(1/2, \, s) < r < t < 1/2$
 and maps~$\xi_j\in L^\infty(\R^3, \, \QQ)$ such that
 $\xi_j = u_j$ a.e.~in~$\R^3\setminus B_t$,
 \begin{equation} \label{Xi}
  \Xi_j := \frac{\xi_j - \zeta_j}{\eta_j} \to v^*
  \qquad \textrm{strongly in } H^1(B_{r})
 \end{equation}
 and
 \[
  \begin{split}
   F_{\eps_j}(\xi_j, \, B_t) 
   &+ \Gamma_{\eps_j}(\xi_j, \, B_t, \, \R^3\setminus B_t)
   - \Gamma_{\eps_j}(u_j, \, B_t, \, \R^3\setminus B_t) \\
   &\leq F_{\eps_j}^\nl(\xi_j, \, B_r) 
     + C\int_{B_r\setminus B_{r-\sigma}} \abs{\nabla\xi_j}^2 
     + C\sigma \eta_j^2 + \o\!\left(\frac{\eps_j^{q-2} \, \eta_j}{\sigma^q}\right)
  \end{split}
 \]
 Since~$u_j$ is a minimiser of~$E_{\eps_j}$, we have
 \[
  F_{\eps_j}(u_j, \, B_t) 
  + \Gamma_{\eps_j}(u_j, \, B_t, \, \R^3\setminus B_t)
  \leq F_{\eps_j}(\xi_j, \, B_t) 
  + \Gamma_{\eps_j}(\xi_j, \, B_t, \, \R^3\setminus B_t)
 \]
 and hence,
 \[
  F_{\eps_j}(u_j, \, B_t) \leq F_{\eps_j}^\nl(\xi_j, \, B_r) 
     + C\int_{B_r\setminus B_{r-\sigma}} \abs{\nabla\xi_j}^2 
     + C\sigma \eta_j^2 + \o\!\left(\frac{\eps_j^{q-2} \, \eta_j}{\sigma^q}\right)
 \]
 We divide both sides of this inequality by~$\eta^2_j$ and obtain
 \begin{equation} \label{blmin1}
  \begin{split}
    F^\nl_{\eps_j}(v_j, \, B_t)
   &\leq  \frac{1}{\eta^2_j} F_{\eps_j}(u_j, \, B_t) \\
   &\leq F_{\eps_j}^\nl\left(\Xi_j, \, B_r\right) 
     + C\int_{B_r\setminus B_{r-\sigma}} \abs{\nabla\Xi_j}^2 
     + C\sigma + \o\!\left(\frac{\eps_j^{q-2}}{\sigma^q \, \eta_j}\right)
  \end{split}
 \end{equation}
 The assumptions~\eqref{smallenergy}, \eqref{nodecay}
 imply that~$\eta_j^2 \geq \eps_j^{2q-4}$, that is
 $\eta_j \geq \eps_j^{q-2}$. Then, recalling~\eqref{Xi},
 Proposition~\ref{prop:Gamma-liminf} and Proposition~\ref{prop:Gamma-limsup},
 we may pass to the limit in~\eqref{blmin1}, 
 first as~$j\to+\infty$, then as~$\sigma\to 0$. We obtain
 \[
  \int_{B_t} L\nabla v\cdot\nabla v \leq 
  \limsup_{j\to +\infty} \frac{1}{\eta^2_j} F_{\eps_j}(u_j, \, B_t)
  \leq \int_{B_t} L\nabla v^*\cdot\nabla v^*
 \]
 and~\eqref{blmin} follows. By choosing~$v^* = v$,
 we also deduce~\eqref{blstrongconv}, exactly as in
 Proposition~\ref{prop:compactness}.
\end{proof}

Now, we are in position to complete the proof of Lemma~\ref{lemma:decay}.

\begin{proof}[Proof of Lemma~\ref{lemma:decay}]
 The assumption~\eqref{smallenergy}, Lemma~\ref{lemma:L2conv}
 and Proposition~\ref{prop:Gamma-liminf} imply that
 \begin{equation} \label{minim0}
  \int_{B_{1/2}}L\nabla v\cdot\nabla v 
  \leq \liminf_{j\to+\infty} \frac{1}{\eta_j^2} F_{\eps_j}(u_j, \, B_{1/2})\leq 1
 \end{equation}
 By Lemma~\ref{lemma:minim}, $v\in H^1(B_{1/2}, \, \T_\zeta\NN)$
 minimises a quadratic functional among maps with values
 in the linear space~$\T_\zeta\NN$. In particular, $v$
 is a solution of the system
 \[
  -\div\left(L\nabla v\right) = 0 \qquad \textrm{in } B_{1/2}
 \]
 This system has constant coefficients and is (strongly) elliptic,
 by Proposition~\ref{prop:L}. Then, elliptic 
 regularity theory (see e.g.~\cite[Section~III.2, 
 Theorem~2.1, Remarks~2.2 and~2.3]{Giaquinta83}) 
 implies that~$v$ is smooth and
 there exists a number~$\theta\in (0, \, 1)$ 
 (depending only on~$L$) such that
 \begin{equation*} 
  \int_{B_{\theta}}L\nabla v\cdot\nabla v
  \leq \frac{\theta}{4} \int_{B_{1/2}}L\nabla v\cdot\nabla v
  \stackrel{\eqref{minim0}}{\leq} \frac{\theta}{4}
 \end{equation*}
 However, \eqref{nodecay} and~\eqref{blstrongconv} imply
 \begin{equation*} 
  \int_{B_{\theta}}L\nabla v\cdot\nabla v
  = \lim_{j\to+\infty} \frac{1}{\eta_j^2} F_{\eps_j}(u_j, \, B_{1/2})
  \geq \frac{\theta}{2}
 \end{equation*}
 Thus, we have obtained a contradiction, and the lemma follows.
\end{proof}
}

\subsection{Proof of Theorem~\ref{th:Holder} and Theorem~\ref{th:conv}}

\begin{proof}[Proof of Theorem~\ref{th:Holder}]
 Let~$\eta$, $\theta$ and~$\eps_*$ be given by Lemma~\ref{lemma:decay}.
 Let~$B_{r_0}(x_0)\csubset\Omega$ be a ball, $\eps\in (0, \, \eps_* r_0)$,
 and let~$u_\eps$ be a minimiser of~$E_\eps$ in~$\mathscr{A}$ such that
 \begin{equation} \label{Holder0}
  F_\eps(u_\eps, \, B_{r_0}(x_0)) \leq \eta^2\,r_0.
 \end{equation}
 By a scaling argument, using~\eqref{scaling}, we can assume
 without loss of generality that~$x_0 = 0$ and~$r_0 = 1$.
 {\BBB We fix a parameter~$\gamma\in (0, \, 1)$.}
 
 \setcounter{step}{0}
 \begin{step}[Campanato estimate for radii~$\rho\geq\eps^\gamma$]
  {\BBB In this case, the result follows by Lemma~\ref{lemma:decay}
  combined with a classical iteration argument.
  Let~$k\geq 1$ be an integer such that~$\theta^k\geq \eps^\gamma$.
  Thanks to~\eqref{Holder0}, we can apply 
  Lemma~\ref{lemma:decay} iteratively, first on the ball~$B_1$,
  then on~$B_\theta$, on~$B_{\theta^2}$, and so on. We obtain
  \begin{equation} \label{holder-largerad1}
   F_\eps(u_\eps, \, B_{\theta^k}) 
   \leq \frac{\theta^k}{2^k} F_\eps(u_\eps, \, B_1)
   + \eps^{2q - 4}\theta^{k-1} 
   \sum_{j=0}^{k-1} 2^{-j} \theta^{(4 - 2q)(k - 1 - j)}
  \end{equation}
  At each step of the iteration, the assumptions of
  Lemma~\ref{lemma:decay} remain satisfied, so long as~$\theta^k\geq \eps^\gamma$
  and~$\eps$ is small enough. Indeed, \eqref{holder-largerad1} implies
  \[
   F_\eps(u_\eps, \, B_{\theta^k}) 
     \leq \frac{\theta^k}{2^k} F_\eps(u_\eps, \, B_1)
     + \eps^{2q - 4}\,\theta^{(k-1)(5 - 2q)}
     \sum_{j=0}^{k-1} \left(\frac{\theta^{2q-4}}{2}\right)^j
  \]
  The series at the right-hand side converges,
  because $\theta \leq 1/2$,
  and its sum is less than~$2$. The assumption~\eqref{Holder0} implies
  \begin{equation} \label{holder-largerad2}
   \begin{split}
    F_\eps(u_\eps, \, B_{\theta^k}) 
    \leq \theta^k 
     \left(\frac{\eta^2}{2^k} + 2\theta^{2q-5}
     \left(\frac{\eps}{\theta^k}\right)^{2q-4} \right) 
   \end{split}
  \end{equation}
  Under the assumption that~$\theta^k\geq\eps^\gamma$, we can 
  further estimate the right-hand side as
  \begin{equation*} 
   \begin{split}
    F_\eps(u_\eps, \, B_{\theta^k}) 
    &\leq \theta^k \left(\frac{\eta^2}{2^k}
    + 2\theta^{2q-5} \, \eps^{(2q - 4)(1 - \gamma)}\right) 
   \end{split}
  \end{equation*}
  and we can make sure that $F_\eps(u_\eps, \, B_{\theta^k}) 
  \leq \eta^2\,\theta^k$ by taking~$\eps$ small enough
  (depending on~$\eta$, $\theta$ and~$\gamma$ only).
  Now, take a radius~$\rho$ such that~$\eps^\gamma \leq \rho < 1$.
  Let~$k = k(\rho)\geq 0$
  be the unique integer such that $\theta^{k+1} \leq \rho < \theta^k$.
  The inequality~\eqref{holder-largerad2} implies
  \begin{equation} \label{holder-largerad3}
   \begin{split}
    F_\eps(u_\eps, \, B_{\rho}) 
    &\leq \frac{\rho}{\theta}
     \left(\frac{\eta^2}{2^k} + 2\theta^{2q-5}
     \left(\frac{\rho^{1/\gamma}}{\rho}\right)^{2q - 4} \right)
    \leq C\rho \left(\rho^{\alpha} + \rho^{(2q-4)(1/\gamma - 1)}\right)
   \end{split}
  \end{equation}
  for~$\alpha := \log 2/\abs{\log\theta} \in (0, \, 1)$
  and some constant~$C$ that depends only on~$\eta$, $\theta$.
  We assume that
  \begin{equation} \label{holderalpha}
   0 < \gamma < \frac{2q - 4}{\alpha + 2q - 4}
  \end{equation}
  which implies $\alpha < (2q - 4)(1/\gamma - 1)$.
  By applying Proposition~\ref{prop:Poincare}, and possibly 
  modifying the value of~$C$, from~\eqref{holder-largerad3} we deduce
  \begin{equation} \label{Holder-largeradii}
   \fint_{B_{\rho}} \abs{u_\eps - \fint_{B_{\rho}}u_\eps}^2
   \leq C\rho^{\alpha} \qquad \textrm{for  } \eps^\gamma \leq \rho < 1.
  \end{equation}  }
 \end{step}
 
 \begin{step}[Campanato estimate for radii~$\rho\leq\eps^\gamma$]
  {\BBB We need to show that an estimate similar to~\eqref{Holder-largeradii}
  holds for~$\rho<\eps^\gamma$ as well. To this end,
  we consider the number~$\nu>1$ given by 
  Assumption~\eqref{hp:nabla_K} and we define
  \begin{equation} \label{beta,p}
   p := 3 + \alpha - \frac{\alpha}{\nu}, \qquad
   \beta := 1 - \frac{\alpha}{(\alpha + 3)\nu}
  \end{equation}
  We have $p > 3$, $0 < \beta < 1$. We claim that 
  we can choose the parameter~$\gamma$ so as to 
  satisfy~\eqref{holderalpha} and
  \begin{equation} \label{betap1}
   \beta < \gamma
  \end{equation}
  Indeed, a straightforward algebraic manipulation shows that
  \begin{equation} \label{betap2}
   1 - \frac{\alpha}{(\alpha + 3)\nu} < \frac{2q - 4}{\alpha + 2q - 4}
   \qquad \Longleftrightarrow \qquad 
   \nu < \frac{\alpha + 2q - 4}{\alpha + 3}
  \end{equation}
  The assumption that~$q > 7/2$ guarantees
  that $(\alpha + 2q - 4)/(\alpha + 3) > 1$. Moreover, we have assumed
  that~$\nabla K$ is integrable (see~\eqref{hp:K_decay}),
  so there is no loss of generality in taking a smaller value
  for~$\nu$ (so long as~$\nu > 1$). Therefore, we may assume
  that~$\nu>1$ satisfies~\eqref{betap2} and hence, we 
  can choose~$\gamma$ that satisfies~\eqref{holderalpha}
  and~\eqref{betap1}.}
  
  Let 
  \[
   m_\eps :=\fint_{B_{2\eps^\beta}} u_\eps
  \]
  and let~$\chi_\eps$ be the characteristic function of the ball~$B_{\eps^\beta}$.
  Since~$\nabla K_\eps$ has zero average, from 
  the Euler-Lagrange equation (Proposition~\ref{prop:EL})
  we obtain
  \[
   \begin{split}
    \nabla(\Lambda\circ u_\eps) 
    &= (\nabla K_\eps) * (u_\eps - m_\eps) \\
    &= (\chi_\eps\nabla K_\eps) * (u_\eps - m_\eps)
    + \left((1 - \chi_\eps)\nabla K_\eps\right) * (u_\eps - m_\eps).
   \end{split}
  \]
  Let~$\tilde{\chi}_\eps$ be the characteristic function of the
  ball~$B_{2\eps^\beta}$. Since~$\chi_\eps\nabla K_\eps$ is
  supported on~$B_{\eps^\beta}$, we deduce
  \[
   \begin{split}
    \nabla(\Lambda\circ u_\eps) 
    &= (\chi_\eps\nabla K_\eps) * (\tilde{\chi}_\eps(u_\eps - m_\eps))
    + \left((1 - \chi_\eps)\nabla K_\eps\right) * (u_\eps - m_\eps)
    \quad \textrm{in } B_{\eps^\beta}.
   \end{split}
  \]
  We apply H\"older's inequality, and 
  then Young's inequality for the convolution:
  \begin{equation} \label{Holder1}
   \begin{split}
    \norm{\nabla (\Lambda\circ u_\eps)}_{L^p(B_{\eps^\beta})}
    &\lesssim \|(\chi_\eps\nabla K_\eps)
    *(\tilde{\chi}_\eps(u_\eps - m_\eps))\|_{L^p(\R^3)} \\ 
    &\qquad\qquad + \eps^{3\beta/p}
    \norm{((1-\chi_\eps)\nabla K_\eps)
    * (u_\eps - m_\eps)}_{L^\infty(\R^3)} \\
    &\lesssim \norm{\nabla K_\eps}_{L^1(B_{\eps^\beta})}
    \norm{u_\eps - m_\eps}_{L^p(B_{2\eps^\beta})} \\ 
    &\qquad\qquad + \eps^{3\beta/p}
    \norm{\nabla K_\eps}_{L^1(\R^3\setminus B_{\eps^\beta})} 
    \norm{u_\eps - m_\eps}_{L^\infty(\R^3)}
   \end{split}
  \end{equation}
  We bound the terms at the right-hand side. 
  {\BBB We have~$\beta < \gamma$,
  so~$2\eps^\beta\geq\eps^\gamma$ and hence}
  \begin{equation*}
   \norm{u_\eps - m_\eps}^2_{L^2(B_{2\eps^\beta})}
   \lesssim \eps^{(3 + \alpha)\beta}
  \end{equation*}
  {\BBB due to~\eqref{Holder-largeradii}.}
  Since $\|u_\eps\|_{L^\infty(\R^3)}\leq C$, by interpolation
  we obtain
  \begin{equation} \label{Holder2}
    \begin{split}
     \norm{u_\eps - m_\eps}_{L^p(B_{2\eps^\beta})} &\lesssim 
     \norm{u_\eps - m_\eps}_{L^2(B_{2\eps^\beta})}^{2/p} 
     \lesssim \eps^{(3 + \alpha)\beta/p}.
   \end{split}
  \end{equation}
  By a change of variable, we have 
  \begin{equation} \label{Holder3}
   \norm{\nabla K_\eps}_{L^1(B_{\eps^\beta})} \leq 
   \eps^{-1}\norm{\nabla K}_{L^1(\R^3)}
  \end{equation}
  and
  \begin{equation} \label{Holder4}
   \begin{split} 
    \norm{\nabla K_\eps}_{L^1(\R^3\setminus B_{\eps^\beta})}
    &= \eps^{-1}\int_{\R^3\setminus B_{\eps^{\beta-1}}}
    \norm{\nabla K (z)} \d z \\
    &\leq \eps^{-1}\int_{\R^3\setminus B_{\eps^{\beta-1}}}
    \norm{\nabla K (z)}
     {\BBB \frac{\abs{z}^\nu}{\eps^{\nu\beta - \nu}} \, \d z }\\
    &\leq {\BBB \eps^{\nu - \nu\beta - 1}} \int_{\R^3}
    \norm{\nabla K (z)} \abs{z}^{\BBB \nu} \d z,
   \end{split}
  \end{equation}
  where the integral at the right-hand side is finite 
  by Assumption~\eqref{hp:nabla_K}. Combining~\eqref{Holder1},
  \eqref{Holder2}, \eqref{Holder3} and~\eqref{Holder4},
  and using that $u_\eps$ is bounded in~$L^\infty(\R^3)$,
  we obtain
  \begin{equation*} 
   \norm{\nabla (\Lambda\circ u_\eps)}_{L^p(B_{\eps^\beta})}
   \lesssim \eps^{(3 + \alpha)\beta/p - 1}
   + {\BBB \eps^{3\beta/p + \nu - \nu\beta - 1} }
  \end{equation*}
  By simple algebra, from~\eqref{beta,p} we obtain
  \begin{equation*} 
   \frac{(3+\alpha)\beta}{p} - 1 =
   {\BBB \frac{3\beta}{p} + \nu - \nu\beta - 1} = 0
  \end{equation*}
  so $\|\nabla(\Lambda\circ u_\eps)\|_{L^p(B_{\eps^\beta})}$
  is bounded. Thanks to~\eqref{nabla_inv}, we deduce that 
  $\|\nabla u_\eps\|_{L^p(B_{\eps^\beta})}$
  is bounded too. Since~$p>3$, {\BBB we have the Sobolev embedding
  $W^{1,p}(B_{\eps^\beta})\hookrightarrow C^\mu(B_{\eps^\beta})$,
  where
  \begin{equation} \label{Holder6}
   \mu := 1 - \frac{3}{p} 
   = {\BBB \frac{\alpha(\nu - 1)}{\alpha(\nu - 1) + 3\nu}}
  \end{equation}
  Moreover, the constant~$C$ in 
  the Sobolev inequality $[u]_{C^\mu(B_r)} \leq C\|\nabla u\|_{L^p(B_r)}$
  is independent of~$r>0$, as demonstrated
  by a scaling argument. Then, we obtain 
  \begin{equation*} 
   [u_\eps]_{C^\mu(B_{\eps^\beta})} \leq C
  \end{equation*}
  and hence, }
  \begin{equation} \label{Holder-smallradii}
   \begin{split}
    \fint_{B_\rho}\abs{u_\eps - \fint_{B_\rho}u_\eps}^2
    &\lesssim \rho^{2\mu} \qquad \textrm{for any } 
    \rho\in (0, \, \eps^\beta].
   \end{split}
  \end{equation}
 \end{step}
 
 \begin{step}[Conclusion]
  By combining~\eqref{Holder-largeradii}, \eqref{Holder6}
  and~\eqref{Holder-smallradii}, we deduce that
  \[
   \fint_{B_\rho}\abs{u_\eps - \fint_{B_\rho}u_\eps}^2 \leq 
   C \rho^{\min(\alpha, \, 2\mu)} = C \rho^{2\mu}
  \]
  for any radius~$\rho\in (0, \, 1)$ and for
  some constant~$C>0$ that does not depend on~$\eps$, $\rho$.
  Then, Campanato embedding gives an $\eps$-independent bound on the 
  $\mu$-H\"older semi-norm of~$u_\eps$ on~$B_{1/2}$.
  This completes the proof. \qedhere
 \end{step}
\end{proof}

\begin{proof}[Proof of Theorem~\ref{th:conv}]
 Let~$u_\eps$ be a minimiser of~$E_\eps$ in~$\mathscr{A}$.
 By the results of~\cite{taylor2018oseen},
 there exists a (non-relabelled)
 subsequence such that $u_\eps\to u_0$ strongly in~$L^2(\Omega)$,
 where~$u_0$ is a minimiser of the limit functional~\eqref{limit}.
 Take a point~$x_0\in\Omega\setminus S[u_0]$, where~$S[u_0]$
 is defined by~\eqref{singularset}.
 By definition of~$S[u_0]$,
 there exists a number~$r_0>0$ such that
 \[
  r_0^{-1}\int_{B_{r_0}(x_0)} L\nabla u_0\cdot\nabla u_0
  \leq \frac{\eta^2}{2},
 \]
 where~$\eta$ is given by Theorem~\ref{th:Holder}.
 Proposition~\ref{prop:compactness} implies
 \[
  r_0^{-1} F_\eps(u_\eps, \, B_{r_0}(x_0))
  \leq \eta^2
 \]
 for any~$\eps$ small enough and hence, by Theorem~\ref{th:Holder},
 $[u_\eps]_{C^\mu(B_{r_0}(x_0))}$ is uniformly bounded.
 Then, Ascoli-Arzel\`a's theorem implies that $u_\eps\to u_0$
 uniformly in~$B_{r_0}(x_0)$.
\end{proof}

\section{Generalisation to finite-thickness boundary conditions}
\label{sect:bd}

In this section, we discuss a variant of the minimisation
problem, where we prescribe~$u$ in a neighbourhood
of~$\partial\Omega$ only. Let~$\Omega_\eps\supset\!\supset\Omega$
be a larger domain, possibly depending on~$\eps$. We consider 
the functional
\begin{equation} \label{energytilde}
 \begin{split}
  \tilde{E}_\eps(u) := - \frac{1}{2\eps^2}\int_{\Omega_\eps\times\Omega_\eps} 
     K_\eps(x-y)u(x)\cdot u(y) \, \d x \, \d y 
     + \frac{1}{\eps^2} \int_\Omega\psi_s(u(x)) \, \d x
     + {\BBB \tilde{C}_\eps,}
 \end{split}
\end{equation}
{\BBB where~$\tilde{C}_\eps$ is a constant. 
The value of~$\tilde{C}_\eps$ is irrelevant for the
purposes of minimisation, but we will make a specific 
choice of~$\tilde{C}_\eps$ (see~\eqref{Ctilde} below)
for convenience. }
As before, we take a map~$u_{\mathrm{bd}}\in H^1(\R^3, \, \QQ)$
that satisfies~\eqref{hp:bd} and define the admissible class
\begin{equation} \label{Aeps}
 \tilde{\mathscr{A}}_\eps := \left\{u\in L^\infty(\Omega_\eps, \, \QQ)\colon
 \psi_s(u)\in L^1(\Omega), \
 u = u_{\mathrm{bd}} \textrm{ a.e. on } \Omega_\eps\setminus\Omega\right\} \!.
\end{equation}
The thickness of the boundary layer~$\Omega_\eps\setminus\Omega$
must be related to the decay properties of the kernel~$K$. 
More precisely, in addition to~\eqref{hp:Kfirst}--\eqref{hp:Klast}, 
\eqref{hp:Hfirst}--\eqref{hp:Hlast}, we assume that
\begin{enumerate}[label = (K$^\prime$), ref = K$^\prime$]
 \item \label{hp:Omega_eps} 
 {\BBB There exist numbers~$q > 7/2$, 
 $\beta\in\left(0, \, 1 - \frac{7}{2q}\right)$
 and~$\tau>0$ such that
 \begin{gather*}
  \int_{\R^3} g(z)\abs{z}^{q} \d z < +\infty
 \end{gather*}
 and~$\dist(\Omega, \, \partial\Omega_\eps) \geq\tau\eps^\beta$
 for any~$\eps> 0$.}
\end{enumerate}

\begin{remark} \label{rk:Omega_eps}
 In case the kernel~$K$ is compactly supported, 
 we can allow for a boundary layer of thickness proportional to~$\eps$.
 More precisely, we can replace the assumption~\eqref{hp:Omega_eps}
 with the following: there exists~$R_0>0$ such that
 $\mathrm{supp}(K)\subseteq B_{R_0}$ 
 and~$\dist(\Omega, \, \partial\Omega_\eps)\geq R_0\eps$
 for any  $\eps > 0$. The proofs in this case remain essentially unchanged.
\end{remark}

Under these assumptions, we can prove the analogues
of Theorems~\ref{th:Holder} and~\ref{th:conv}.

\begin{theorem} \label{th:Holder-tilde}
 Assume that the conditions~\eqref{hp:Kfirst}--\eqref{hp:Klast},
 \eqref{hp:Hfirst}--\eqref{hp:Hlast},
 \eqref{hp:bd} and~\eqref{hp:Omega_eps}
 are satisfied. Then, there exist positive
 numbers~$\eta$, $\eps_*$, $M$ and~$\mu\in (0, \, 1)$ such that,
 for any ball~$B_{r_0}(x_0)\csubset\Omega$, 
 any~$\eps\in (0, \, \eps_* r_0)$,
 and any minimiser~$\tilde{u}_\eps$ of~$\tilde{E}_\eps$ 
 in~$\tilde{\mathscr{A}}_\eps$, there holds
 \[
  r_0^{-1} F_\eps(\tilde{u}_\eps, \, B_{r_0}(x_0)) \leq \eta
  \qquad\Longrightarrow\qquad
  r_0^{\mu} \, [\tilde{u}_\eps]_{C^\mu(B_{r_0/2}(x_0))} \leq M.
 \]
\end{theorem}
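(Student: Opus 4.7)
The plan is to reduce Theorem~\ref{th:Holder-tilde} to the machinery developed for Theorem~\ref{th:Holder}, by showing that any minimiser~$\tilde u_\eps$ of $\tilde E_\eps$ in~$\tilde{\mathscr A}_\eps$, extended by~$u_{\mathrm{bd}}$ on~$\R^3\setminus\Omega_\eps$, is an $\omega$-minimiser of~$E_\eps$ in~$\Omega$ (in the sense of Definition~\ref{def:omegamin}) for some~$\omega$ with $\omega(s)\to 0$ as $s\to 0$. Once this is in place, Proposition~\ref{prop:compactness} and Lemma~\ref{lemma:decay} apply verbatim to $\tilde u_\eps$, and the Campanato-decay scheme in the proof of Theorem~\ref{th:Holder} will yield the stated H\"older bound.

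As a preliminary, I would repeat the direct-method argument of Proposition~\ref{prop:EL} to obtain existence of $\tilde u_\eps$ together with the Euler--Lagrange equation $\Lambda(\tilde u_\eps(x))=\int_{\Omega_\eps}K_\eps(x-y)\tilde u_\eps(y)\,\d y$ on~$\Omega$. Since $\|K_\eps\|_{L^1(\R^3)}$ and $\eps\|\nabla K_\eps\|_{L^1(\R^3)}$ are uniformly bounded, the arguments of Proposition~\ref{prop:physicality} carry over and give strict physicality for~$\tilde u_\eps$ and the Lipschitz bound $\|\nabla\tilde u_\eps\|_{L^\infty(\Omega)}\lesssim\eps^{-1}$.

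The heart of the argument is the $\omega$-minimiser bound. Given $v\in L^\infty(\R^3,\,\QQ)$ with $v=\tilde u_\eps$ outside a ball $B_\rho(x_0)\subseteq\Omega$, the restriction $v|_{\Omega_\eps}$ lies in $\tilde{\mathscr A}_\eps$, so $\tilde E_\eps(\tilde u_\eps)\leq\tilde E_\eps(v)$. Since $v=\tilde u_\eps$ on $\R^3\setminus B_\rho(x_0)$ and $\R^3\setminus\Omega_\eps\subseteq\R^3\setminus\Omega$, the difference $E_\eps(\tilde u_\eps)-E_\eps(v)-[\tilde E_\eps(\tilde u_\eps)-\tilde E_\eps(v)]$ collapses, by symmetry of~$K_\eps$, to a single integral over $(\R^3\setminus\Omega_\eps)\times B_\rho(x_0)$, bounded in absolute value by
\[
 \frac{C}{\eps^2}\int_{B_\rho(x_0)}\abs{\tilde u_\eps(y)-v(y)}\int_{\R^3\setminus\Omega_\eps}g_\eps(x-y)\,\d x\,\d y.
\]
Using $\abs{x-y}\geq\tau\eps^{1-2/q}$ on the support and the change of variable $z=(x-y)/\eps$, the inner integral is controlled by $\int_{\abs{z}\geq\tau\eps^{-2/q}}g(z)\,\d z$. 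Because $g\abs{z}^q\in L^1(\R^3)$ by~\eqref{hp:Omega_eps}, dominated convergence yields $R^q\int_{\abs{z}\geq R}g\to 0$ as $R\to\infty$, whence $\eps^{-2}\int_{\abs{z}\geq\tau\eps^{-2/q}}g(z)\,\d z\leq\omega_0(\eps)$ with $\omega_0(\eps)\to 0$. Combining with $\|\tilde u_\eps-v\|_{L^\infty}\leq C$ and $\abs{B_\rho(x_0)}\lesssim\mathrm{diam}(\Omega)^2\,\rho$ (valid because $B_\rho(x_0)\subseteq\Omega$), we obtain $E_\eps(\tilde u_\eps)-E_\eps(v)\leq\omega(\eps)\rho$ with $\omega(\eps):=C'\omega_0(\eps)\to 0$, which is the required $\omega$-minimiser inequality.

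With this in place, Proposition~\ref{prop:compactness} and Lemma~\ref{lemma:decay} yield the large-scale Campanato decay for $F_\eps(\tilde u_\eps,\,\cdot)$ as in Step~1 of the proof of Theorem~\ref{th:Holder}. For small scales $\rho\lesssim\eps$, I would rerun Step~2 using the Euler--Lagrange equation for~$\tilde u_\eps$: extending $\tilde u_\eps$ by~$u_{\mathrm{bd}}$,
\[
 \nabla(\Lambda\circ\tilde u_\eps)(x) = (\nabla K_\eps)*\tilde u_\eps(x) - \int_{\R^3\setminus\Omega_\eps}\nabla K_\eps(x-y)\,u_{\mathrm{bd}}(y)\,\d y,
\]
and the second term, for $x\in B_{\eps^\beta}(x_0)$ with $B_{r_0}(x_0)\subseteq\Omega$, is controlled using $\abs{x-y}\geq r_0/2$ together with~\eqref{hp:nabla_K} by $\eps^{-1}(r_0/(2\eps))^{-3}\int\|\nabla K\|\abs{z}^3\,\d z = O(\eps^2)$, negligible against the principal terms analysed in Step~2. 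The main obstacle I expect is precisely the smallness estimate $\eps^{-2}\int_{\abs{z}\geq\tau\eps^{-2/q}}g\to 0$: without it the $\omega$-minimiser property cannot be established, and this smallness is exactly the reason behind the scaling $\eps^{1-2/q}$ in~\eqref{hp:Omega_eps}, matching the $q$-th moment of~$g$. Once both pieces are in place, Campanato embedding gives the uniform H\"older estimate exactly as in Theorem~\ref{th:Holder}.
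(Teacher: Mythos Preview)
Your proposal is correct and follows essentially the same route as the paper: reduce to the $\omega$-minimiser framework via Lemma~\ref{lemma:omega} (your tail estimate $\eps^{-2}\int_{\abs{z}\geq\tau\eps^{-2/q}}g\to 0$ is exactly the content of Lemma~\ref{lemma:H}), then rerun the Campanato scheme of Theorem~\ref{th:Holder}. Your treatment of Step~2 is in fact slightly more explicit than the paper's, which simply notes that the Euler--Lagrange equation~\eqref{ELtilde} can be written as $\Lambda(\tilde u_\eps)=K_\eps*(\tilde u_\eps\chi_{\Omega_\eps})$ and asserts that ``the same arguments as before'' go through; your observation that the correction term $\int_{\R^3\setminus\Omega_\eps}\nabla K_\eps(x-y)u_{\mathrm{bd}}(y)\,\d y$ is $O(\eps^2)$ via~\eqref{hp:nabla_K} makes this precise.
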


\begin{theorem} \label{th:conv-tilde}
 Assume that the conditions~\eqref{hp:Kfirst}--\eqref{hp:Klast},
 \eqref{hp:Hfirst}--\eqref{hp:Hlast}, 
 \eqref{hp:bd} and~\eqref{hp:Omega_eps} are satisfied. 
 Let~$\tilde{u}_\eps$ be a minimiser of~$\tilde{E}_\eps$ in~$\tilde{\mathscr{A}}_\eps$.
 Then, up to extraction of a (non-relabelled) subsequence, we have
 \[
  \tilde{u}_\eps \to u_0 \qquad \textrm{locally uniformly in }
  \Omega\setminus S[u_0], 
 \]
 where $u_0$ is a minimiser of the functional~\eqref{limit}
 in~$\mathscr{A}$ and~$S[u_0]$ is defined by~\eqref{singularset}.
\end{theorem}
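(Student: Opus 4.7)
The plan is to mirror the proof of Theorem~\ref{th:conv}, using
Theorem~\ref{th:Holder-tilde} in place of Theorem~\ref{th:Holder}
and a suitably adapted local compactness statement in place of
Proposition~\ref{prop:compactness}. What I need is that a minimiser
$\tilde u_\eps$ of $\tilde E_\eps$ in $\tilde{\mathscr A}_\eps$
admits a subsequence converging in $L^2_\loc(\Omega)$ to a
minimiser $u_0$ of the limit functional~\eqref{limit} in
$\mathscr A$, and that on every ball $B_s(x_0)\csubset\Omega$ one has
$F_\eps(\tilde u_\eps, B_s(x_0))\to
\int_{B_s(x_0)} L\nabla u_0\cdot\nabla u_0$. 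Once this is available,
the endgame is standard: pick $x_0\in\Omega\setminus S[u_0]$, choose
$r_0>0$ with
$r_0^{-1}\int_{B_{r_0}(x_0)}L\nabla u_0\cdot\nabla u_0\leq\eta/2$
(with $\eta$ the constant given by Theorem~\ref{th:Holder-tilde}),
transfer the smallness to $\tilde u_\eps$ by the strong energy
convergence, apply Theorem~\ref{th:Holder-tilde} on
$B_{r_0/2}(x_0)$, and conclude via Ascoli--Arzel\`a.

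I would establish the local compactness by replaying the proof of
Proposition~\ref{prop:compactness}, with $E_\eps$, $\R^3$, and
$\omega$-minimality replaced by $\tilde E_\eps$, $\Omega_\eps$, and
the genuine minimality of $\tilde u_\eps$ for $\tilde E_\eps$ in
$\tilde{\mathscr A}_\eps$. The interpolating competitor $v_\eps$
constructed in steps~2--3 of the original proof is modified only
inside a ball $B_c\csubset\Omega$, so it coincides with
$\tilde u_\eps$ on a neighbourhood of $\partial\Omega_\eps$ and is
automatically an admissible element of $\tilde{\mathscr A}_\eps$.
The gluing lemmas~\ref{lemma:glue-bis} and~\ref{lemma:glue} admit
direct analogues with $\R^3$ replaced by $\Omega_\eps$, and the
localised $\Gamma$-convergence statements
(Propositions~\ref{prop:Gamma-liminf}
and~\ref{prop:Gamma-limsup}) apply verbatim, since they concern only
local quantities. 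A global minimiser $u_0$ in $\mathscr A$ is then
obtained by a diagonal argument on a countable covering of $\Omega$,
together with a $\Gamma$-convergence argument for
$\tilde E_\eps\to E_0$ paralleling~\cite{taylor2018oseen}.

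The main obstacle is preparing the preliminary tools for
$\tilde E_\eps$. One needs analogues of Proposition~\ref{prop:EL}
(existence and the Euler--Lagrange equation) and
Proposition~\ref{prop:physicality} (uniform separation from
$\partial\QQ$ and the $\eps^{-1}$-Lipschitz bound) for
$\tilde u_\eps$. The Euler--Lagrange equation for $\tilde E_\eps$
reads
$\Lambda(\tilde u_\eps(x))=\int_{\Omega_\eps}K_\eps(x-y)\tilde u_\eps(y)\,\d y$
for a.e.\ $x\in\Omega$; Young's inequality gives
$\norm{\Lambda\circ\tilde u_\eps}_{L^\infty(\Omega)}\leq
\norm{K_\eps}_{L^1(\R^3)}\norm{\tilde u_\eps}_{L^\infty(\Omega_\eps)}\leq C$,
and~\eqref{Lambda_blowup} yields the physicality estimate
$\dist(\tilde u_\eps(x),\partial\QQ)\geq\delta$. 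The thickness
assumption~\eqref{hp:Omega_eps} does not enter directly at this
stage; it is already encoded in Theorem~\ref{th:Holder-tilde}
through the Chebyshev control of the $q$-th moment of $g$ on
$\R^3\setminus B_{\tau\eps^{-2/q}}$, and it is that control which
allows the passage from the local compactness result to the
H\"older bound.
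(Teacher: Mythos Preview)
Your plan is workable but takes a longer road than the paper does, and in one place it mis-locates where the assumption~\eqref{hp:Omega_eps} is actually needed.

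The paper's route is much shorter because the $\omega$-minimiser framework was set up precisely for this purpose (see the remark before Definition~\ref{def:omegamin}). The single new ingredient in Section~\ref{sect:bd} is Lemma~\ref{lemma:omega}: if $\tilde u_\eps$ minimises $\tilde E_\eps$ in $\tilde{\mathscr A}_\eps$, then its extension by $u_{\mathrm{bd}}$ to $\R^3$ is an $\omega$-minimiser of the \emph{original} functional $E_\eps$ in~$\Omega$. Since Proposition~\ref{prop:compactness} and Lemma~\ref{lemma:decay} were already stated and proved for $\omega$-minimisers of~$E_\eps$, nothing has to be replayed or adapted; Theorems~\ref{th:Holder-tilde} and~\ref{th:conv-tilde} follow from the same arguments as Theorems~\ref{th:Holder} and~\ref{th:conv}, verbatim. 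There is no need to rewrite the gluing lemmas with $\Omega_\eps$ in place of~$\R^3$, nor to re-run the competitor construction for~$\tilde E_\eps$.

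Your claim that the thickness hypothesis~\eqref{hp:Omega_eps} ``does not enter directly at this stage'' is not accurate. In the paper it enters exactly here: Lemma~\ref{lemma:H} uses~\eqref{hp:Omega_eps} to show $H_\eps\to 0$ uniformly on~$\Omega$, and this is what makes the error term in Lemma~\ref{lemma:omega} of size~$\omega(\eps)\rho$. In your direct approach the same issue would surface when you compare $\tilde E_\eps$ with $F_\eps(\cdot,\Omega_\eps)$: because the interaction integral is over $\Omega_\eps\times\Omega_\eps$ rather than $\R^3\times\R^3$, the identity leading to~\eqref{energy2} produces an extra term $\frac{1}{2}\int_\Omega H_\eps(x)\cdot u(x)^{\otimes 2}\,\d x$, and controlling it again requires~\eqref{hp:Omega_eps} via Lemma~\ref{lemma:H}. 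So either way, \eqref{hp:Omega_eps} is needed for the compactness step, not only for the H\"older bound.
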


The proofs of Theorem~\ref{th:Holder-tilde} and~\ref{th:conv-tilde}
are largely similar to those of Theorem~\ref{th:Holder}
and~\ref{th:conv}. {\BBB First, we show that
the energy~$\tilde{E}_\eps$ has an alternative expression,
which is analogous to~\eqref{energy2}. We define
\begin{equation} \label{Ctilde}
 \tilde{C}_\eps := \frac{c_0}{\eps^2}\abs{\Omega}
 + \frac{1}{2\eps^2} \int_{\Omega_\eps\setminus\Omega}
 \left(\int_{(\Omega_\eps - x)/\eps} K(z)\,\d z\right)
 \cdot u_{\mathrm{bd}}(x)^{\otimes 2} \, \d x,
\end{equation}
where~$c_0$ is the same number as in~\eqref{psib}, and
\begin{equation}  \label{Heps}
 H_\eps(x) := \frac{1}{2\eps^2} 
 \int_{\R^3\setminus(\Omega_\eps - x)/\eps} K(z) \, \d z
 \qquad \textrm{for any } x\in\R^3.
\end{equation}

\begin{lemma} \label{lemma:Etilde2}
 For any~$u\in\tilde{\mathscr{A}}_\eps$, we have
 \[
  \begin{split}
   \tilde{E}_\eps(u) &= \frac{1}{4\eps^2} 
   \int_{\Omega_\eps\times\Omega_\eps} K_\eps(x-y)
   \cdot\left(u(x) - u(y)\right)^{\otimes 2} \d x \, \d y \\
   &\qquad\qquad\qquad + \int_{\Omega} 
   H_\eps(x)\cdot u(x)^{\otimes 2} \, \d x
   + \frac{1}{\eps^2} \int_{\Omega} \psi_b(u(x)) \, \d x
  \end{split}
 \]
\end{lemma}
\begin{proof}
 We inject the algebraic identity
 \[
  -2K(x-y)u(x)\cdot u(y) 
  = K(x-y)\cdot(u(x) - u(y))^{\otimes 2}
  - K(x-y)\cdot u(x)^{\otimes 2} - K(x-y)\cdot u(y)^{\otimes 2}
 \]
 in the expression for~$\tilde{E}_\eps$. 
 Since~$K$ is even, we obtain
 \[
  \begin{split}
   \tilde{E}_\eps(u) 
   &= F^\nl_\eps(u, \, \Omega_\eps)
   - \frac{1}{2\eps^2} \int_{\Omega_\eps\times\Omega_\eps} 
   K_\eps(x -y)\cdot u(x)^{\otimes 2} \,\d x
   + \frac{1}{\eps^2} \int_{\Omega} \psi_s(u(x)) \, \d x + \tilde{C}_\eps \\
   &= F^\nl_\eps(u, \, \Omega_\eps)
   - \frac{1}{2\eps^2} \int_{\Omega} \left(\int_{(\Omega_\eps - x)/\eps} 
   K(z) \, \d z\right) \cdot u(x)^{\otimes 2} \,\d x \\
   &\qquad  - \frac{1}{2\eps^2} 
   \int_{\Omega_\eps\setminus\Omega} \left(\int_{(\Omega_\eps - x)/\eps} 
   K(z) \, \d z\right) \cdot u_{\mathrm{bd}}(x)^{\otimes 2} \,\d x
   + \frac{1}{\eps^2} \int_\Omega \psi_s(u(x)) \, \d x + \tilde{C}_\eps
  \end{split}
 \]
 where~$F^\nl_\eps$ is defined by~\eqref{locintenergy}.
 Due to~\eqref{Ctilde}, we deduce
 \[
  \tilde{E}_\eps(u) 
   = F^\nl_\eps(u, \, \Omega_\eps)
   - \frac{1}{2\eps^2} \int_{\Omega} \left(\int_{(\Omega_\eps - x)/\eps} 
   K(z) \, \d z\right) \cdot u(x)^{\otimes 2} \,\d x 
   + \frac{1}{\eps^2} \int_\Omega \left(\psi_s(u(x)) + c_0\right) \d x 
 \]
 and, thanks to~\eqref{psib} and~\eqref{Heps}, the lemma follows.
\end{proof}
}

\begin{lemma} \label{lemma:ELtilde}
 For any~$\eps$, there exists a minimiser~$\tilde{u}_\eps$
 for~$\tilde{E}_\eps$ in~$\tilde{\mathscr{A}}_\eps$
 and it satisfies the Euler-Lagrange equation, 
 \begin{equation} \label{ELtilde}
  \Lambda(\tilde{u}_\eps(x)) = 
  \int_{\Omega_\eps} K_\eps(x-y)\tilde{u}_\eps(y)\,\d y
 \end{equation}
 for a.e.~$x\in\Omega$. 
\end{lemma}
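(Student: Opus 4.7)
The plan is to mimic the proof of Proposition~\ref{prop:EL}, keeping track of the fact that the integration is now over the bounded set $\Omega_\eps$ rather than $\R^3$, which if anything simplifies matters. As before, without loss of generality we consider the functional
\[
 \tilde{\mathcal{F}}(u) := \int_\Omega \psi_s(u(x))\,\d x
 - \int_{\Omega_\eps}\int_{\Omega_\eps} K_\eps(x-y)u(x)\cdot u(y)\,\d x\,\d y
\]
on the class $\tilde{\mathscr{A}}_\eps$. First I would establish existence by the direct method. The admissible class is non-empty (take the constant extension $u \equiv u_0$ on $\Omega$ for any $u_0 \in \QQ$). Any $u \in \tilde{\mathscr{A}}_\eps$ takes values in the bounded set $\overline{\QQ}$, so $\norm{u}_{L^\infty(\Omega_\eps)}$ and hence $\norm{u}_{L^2(\Omega_\eps)}$ are bounded uniformly; the bilinear term admits the same Cauchy-Schwarz/Young estimate as in Proposition~\ref{prop:EL}, so $\tilde{\mathcal{F}}$ is bounded below. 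A minimising sequence therefore has a weak-$*$ $L^\infty(\Omega_\eps)$ limit. The entropic term is weak-$*$ lower semicontinuous by convexity of $\psi_s$, and the bilinear term is in fact weak-$*$ continuous: splitting $u = u_{\mathrm{bd}}\,\chi_{\Omega_\eps \setminus \Omega} + u\,\chi_\Omega$, the boundary-boundary contribution is a constant, the boundary-bulk contribution is linear in $u|_\Omega$ against the fixed $L^1$ function $K_\eps * (u_{\mathrm{bd}}\,\chi_{\Omega_\eps \setminus \Omega})$, and the bulk-bulk contribution is continuous by the same Eveson-compactness argument as in Proposition~\ref{prop:EL} (since $\Omega$ is bounded and $K_\eps \in L^1$). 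Existence follows.

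Next I would derive the Euler-Lagrange equation. Since the minimiser $\tilde{u}_\eps$ has finite energy, $\psi_s(\tilde{u}_\eps) \in L^1(\Omega)$, so the set $\{x \in \Omega : \tilde{u}_\eps(x) \in \partial\QQ\}$ has zero measure. Define $U_\delta := \{x \in \Omega : \psi_s(\tilde{u}_\eps(x)) < 1/\delta\}$; by assumption~\eqref{hp:blow-up}, on $U_\delta$ the minimiser is bounded away from $\partial\QQ$ by some $\gamma(\delta) > 0$. For any $\phi \in L^\infty(\R^3,\R^m)$ supported in $U_\delta$ and any $\eta$ sufficiently small, $\tilde{u}_\eps + \eta\phi$ still lies in $\tilde{\mathscr{A}}_\eps$ (the boundary values are untouched since $\mathrm{supp}\,\phi \subseteq U_\delta \subseteq \Omega$). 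Using $C^2$-smoothness of $\psi_s$ away from $\partial\QQ$, I can differentiate under the integral at $\eta = 0$ just as in Proposition~\ref{prop:EL}, obtaining
\[
 0 = \int_{U_\delta} \left(\Lambda(\tilde{u}_\eps(x)) - \int_{\Omega_\eps} K_\eps(x-y)\tilde{u}_\eps(y)\,\d y\right) \cdot \phi(x) \, \d x.
\]
Since $\phi$ is arbitrary, \eqref{ELtilde} holds a.e.\ on $U_\delta$, and since $\Omega = \bigcup_{\delta > 0} U_\delta$ modulo a null set, \eqref{ELtilde} holds a.e.\ on $\Omega$.

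I do not expect any genuine obstacle here: the argument is a direct transcription of Proposition~\ref{prop:EL}, with $\R^3$ replaced by $\Omega_\eps$. The only mildly delicate point is verifying that variations $\tilde{u}_\eps + \eta\phi$ remain admissible, but this is automatic because $\mathrm{supp}\,\phi \subseteq U_\delta \Subset \Omega$ leaves the prescribed values on $\Omega_\eps \setminus \Omega$ unchanged. The specific decay assumption~\eqref{hp:Omega_eps} on $\Omega_\eps$ is not needed for this lemma; it will enter later in the proofs of Theorems~\ref{th:Holder-tilde} and~\ref{th:conv-tilde}.
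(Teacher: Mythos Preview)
Your proposal is correct and matches the paper's approach exactly: the paper simply states that the proof of Lemma~\ref{lemma:ELtilde} is identical to that of Proposition~\ref{prop:EL} and omits it. Your observation that assumption~\eqref{hp:Omega_eps} plays no role here is also accurate.
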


The proof of Lemma~\ref{lemma:ELtilde} is identical to that 
of Proposition~\ref{prop:EL}, so we skip it for brevity.
We remark that the equation~\eqref{ELtilde} can be written as
\[
 \Lambda(\tilde{u}_\eps) = K_\eps * (\tilde{u}_\eps\chi_\eps)
 \qquad \textrm{a.e. on } \Omega,
\]
where~$\chi_\eps$ is the characteristic function of~$\Omega_\eps$.
In particular, the uniform strict physicality of~$\tilde{u}_\eps$
follows from~\eqref{ELtilde}, exactly as in Proposition~\ref{prop:physicality}.

\begin{lemma} \label{lemma:H}
 Under the assumption~\eqref{hp:Omega_eps},
 {\BBB the tensor field~$H_\eps$ defined by~\eqref{Heps} satisfies
 \[
  \sup_{x\in\Omega} \norm{H_\eps(x)} = \o(\eps^{q - \beta q - 2}) 
 \]
 as~$\eps\to 0$.}
\end{lemma}
{\BBB Under the assumption~\eqref{hp:Omega_eps}, we have~$\beta < 1 - 7/(2q)$,
which implies~$q - \beta q - 2 > 3/2$. In particular,
$H_\eps$ converges to zero uniformly in~$\Omega$ as~$\eps\to 0$.}
\begin{proof}[Proof of Lemma~\ref{lemma:H}]
 For any~$x\in\Omega$, we have
 {\BBB $B_{\tau\eps^\beta}(x)\subseteq\Omega_\eps$}
 by \eqref{hp:Omega_eps}, and hence 
 {\BBB $B_{\tau\eps^{\beta-1}}\subseteq(\Omega_\eps - x)/\eps$},
 {\BBB $\R^3\setminus (\Omega_\eps - x)/\eps \subseteq
 \R^3\setminus B_{\tau\eps^{\beta - 1}}$.}
 Then, the definition~\eqref{Heps} of~$H_\eps$ gives
 \[
  \begin{split}
   \norm{H_\eps(x)}
   &\leq \frac{1}{\eps^2} 
    \int_{\R^3\setminus (\Omega_\eps - x)/\eps} \norm{K(z)} \d z \\
   &\leq \frac{1}{\eps^2} 
   {\BBB \int_{\R^3\setminus B_{\tau\eps^{\beta-1}}} \norm{K(z)} \d z }\\
   &\leq {\BBB \frac{\eps^{q - \beta q - 2}}{\tau^q}
    \int_{\R^3\setminus B_{\tau\eps^{\beta-1}}} 
    \norm{K(z)} \abs{z}^q \, \d z}
  \end{split}
 \]
 {\BBB and the lemma follows, due to~\eqref{hp:Omega_eps}}.
\end{proof}

\begin{lemma} \label{lemma:omega}
 Let~$\tilde{u}_\eps$ be a minimiser for~$\tilde{E}_\eps$
 in~$\tilde{\mathscr{A}}_\eps$, identified with its extension
 by~$u_{\mathrm{bd}}$ to~$\R^3$. Then, $\tilde{u}_\eps$ is
 an~$\omega$-minimiser for~$E_\eps$ in~$\Omega$, where 
 {\BBB $\omega(s) := C s^{q - \beta q - 2}$, $C>0$ is a constant
 that depends only on~$\Omega$, $K$, $\QQ$ and~$q$, $\beta$
 are given by~\eqref{hp:Omega_eps}.}
\end{lemma}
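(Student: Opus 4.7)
The plan is to compare $\tilde{E}_\eps$ and $E_\eps$ directly and absorb the discrepancy into an admissible function $\omega$. Fix a ball $B_\rho(x_0)\subseteq\Omega$ and a competitor $v\in L^\infty(\R^3, \, \QQ)$ coinciding with $\tilde{u}_\eps$ on $\R^3\setminus B_\rho(x_0)$; we may assume $E_\eps(v)<+\infty$, otherwise the inequality is trivial. Since $B_\rho(x_0)\subseteq\Omega\subseteq\Omega_\eps$, the restriction $v|_{\Omega_\eps}$ agrees with $\tilde{u}_\eps$ on $\Omega_\eps\setminus B_\rho(x_0)$, and in particular equals $u_{\mathrm{bd}}$ on $\Omega_\eps\setminus\Omega$. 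Hence $v|_{\Omega_\eps}\in\tilde{\mathscr{A}}_\eps$ is an admissible competitor, and minimality of $\tilde{u}_\eps$ gives $\tilde{E}_\eps(\tilde{u}_\eps)\leq\tilde{E}_\eps(v|_{\Omega_\eps})$.

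Next I would decompose
\[
E_\eps(\tilde{u}_\eps) - E_\eps(v) = \bigl[\tilde{E}_\eps(\tilde{u}_\eps) - \tilde{E}_\eps(v|_{\Omega_\eps})\bigr] + R_\eps,
\]
where $R_\eps$ collects the interaction contribution over $(\R^3\times\R^3)\setminus(\Omega_\eps\times\Omega_\eps)$ (the singular-potential terms live on $\Omega$ and are common to both functionals, so they cancel). Since $\tilde{u}_\eps = v = u_{\mathrm{bd}}$ on $\R^3\setminus\Omega_\eps$, the pure exterior region $(\R^3\setminus\Omega_\eps)^2$ contributes zero; using the evenness of $K_\eps$ to combine the two mixed regions and noting that $\tilde{u}_\eps - v$ is supported in $B_\rho(x_0)$, one obtains
\[
R_\eps = -\frac{1}{\eps^2}\int_{(\R^3\setminus\Omega_\eps)\times B_\rho(x_0)} K_\eps(x-y)\,u_{\mathrm{bd}}(x)\cdot\bigl(\tilde{u}_\eps(y)-v(y)\bigr)\,\d x\,\d y.
\]

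The heart of the argument is to bound $|R_\eps|$ by $\omega(\eps)\rho$. Using~\eqref{hp:lambda_max} one has $|K_\eps(z)\,a\cdot b|\lesssim g_\eps(z)|a||b|$, and all three maps are uniformly bounded because $\QQ$ is bounded, so
\[
|R_\eps|\lesssim \frac{1}{\eps^2}\int_{B_\rho(x_0)}\int_{\R^3\setminus\Omega_\eps} g_\eps(x-y)\,\d x\,\d y.
\]
For every $y\in B_\rho(x_0)\subseteq\Omega$, assumption~\eqref{hp:Omega_eps} forces $|x-y|\geq\tau\eps^{1-2/q}$ when $x\in\R^3\setminus\Omega_\eps$, and the change of variable $z=(x-y)/\eps$ yields
\[
\int_{\R^3\setminus\Omega_\eps} g_\eps(x-y)\,\d x\leq\int_{|z|\geq\tau\eps^{-2/q}} g(z)\,\d z\leq \tau^{-q}\eps^{2}\,\alpha(\eps),
\]
where $\alpha(\eps) := \int_{|z|\geq\tau\eps^{-2/q}} g(z)|z|^q\,\d z \to 0$ as $\eps\to 0$ by dominated convergence, since $g(z)|z|^q\in L^1(\R^3)$ by~\eqref{hp:Omega_eps}. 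The decisive trick is to control the tail of $g$ by the \emph{tail} of the $q$-th moment (not the full moment), so that what is gained beyond $\eps^2$ is genuinely $o(1)$ and beats the $\eps^{-2}$ prefactor. Combining with $|B_\rho(x_0)|\lesssim\rho^3\leq\mathrm{diam}(\Omega)^2\,\rho$, I reach $|R_\eps|\leq C(\Omega, K, \QQ)\,\rho\,\alpha(\eps)$.

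To finish, set $\omega(\eps) := C(\Omega, K, \QQ)\sup_{s\in(0, \, \eps]}\alpha(s)$ (with $\omega(0)=0$): this is non-decreasing, depends only on the data, and satisfies $\omega(\eps)\to 0$ as $\eps\to 0$. Combined with $\tilde{E}_\eps(\tilde{u}_\eps)\leq\tilde{E}_\eps(v|_{\Omega_\eps})$ it produces $E_\eps(\tilde{u}_\eps)\leq E_\eps(v)+\omega(\eps)\rho$, which is exactly Definition~\ref{def:omegamin}. The main technical point to get right is the cancellation structure in the interaction term: the symmetry of $K$ together with the agreement $\tilde{u}_\eps=v=u_{\mathrm{bd}}$ on $\R^3\setminus\Omega_\eps$ reduces $R_\eps$ to a single one-sided boundary integral, and the exponent $1-2/q$ in~\eqref{hp:Omega_eps} is calibrated precisely so that a tail-moment bound on $g$ compensates the $\eps^{-2}$ scaling of the non-local energy.
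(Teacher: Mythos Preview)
Your proof is correct and follows essentially the same approach as the paper. The only organisational difference is that you form the difference $E_\eps(\tilde{u}_\eps)-E_\eps(v)$ directly, so the $v$-independent exterior interactions cancel immediately and leave the single mixed integral $R_\eps$, whereas the paper computes $E_\eps(v)-\tilde{E}_\eps(v)$ for a generic $v$, identifies the $v$-independent pieces, and then subtracts; your inline tail estimate on $g$ is the content of the paper's auxiliary Lemma~\ref{lemma:H}.
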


{\BBB Lemma~\ref{lemma:omega} guarantees
that our compactness result, Proposition~\ref{prop:compactness},
applies to minimisers of~$\tilde{E}_\eps$.}

\begin{proof}[Proof of Lemma~\ref{lemma:omega}]
 We write~$\Omega_\eps^c := \R^3\setminus\Omega_\eps$.
 Let~$\tilde{u}_\eps$ be a minimiser for~$\tilde{E}_\eps$
 in~$\tilde{\mathscr{A}}_\eps$.
 Let~$B := B_\rho(x_0)\subseteq\Omega$ be a ball,
 and let~$v\in L^\infty(\R^3, \, \QQ)$ be such
 that~$v = \tilde{u}_\eps$~a.e.~on~$\R^3\setminus B$.
 By comparing~\eqref{energy} with~\eqref{energytilde}, 
 and using~\eqref{hp:K_even}, we obtain
 \begin{equation} \label{omega0}
  \begin{split}
   &E_\eps(v) - \tilde{E}_\eps(v) {\BBB - C_\eps + \tilde{C}_\eps} \\
   &= -\frac{1}{\eps^2} 
   \int_{\Omega_\eps\times\Omega_\eps^c} K_\eps(x-y)v(x)\cdot v(y)\,\d x\,\d y
   -\frac{1}{2\eps^2} 
   \int_{\Omega_\eps^c\times\Omega_\eps^c} K_\eps(x-y)v(x)\cdot v(y)\,\d x\,\d y\\
   &= -\frac{1}{\eps^2} 
   \int_{B\times\Omega_\eps^c} K_\eps(x-y)v(x)\cdot u_{\mathrm{bd}}(y)\,\d x\,\d y
   -\frac{1}{\eps^2} 
   \int_{(\Omega\setminus B)\times\Omega_\eps^c} 
    K_\eps(x-y)\tilde{u}_\eps(x)\cdot u_{\mathrm{bd}}(y)\,\d x\,\d y \\
   &\qquad\qquad\qquad -\frac{1}{2\eps^2} 
   \int_{\Omega_\eps^c\times\Omega_\eps^c} 
    K_\eps(x-y)u_{\mathrm{bd}}(x)\cdot u_{\mathrm{bd}}(y)\,\d x\,\d y
  \end{split}
 \end{equation}
 The second and third integral at the right-hand side are independent of~$v$.
 We bound the first integral by making the change of variable $y = x + \eps z$
 and applying Fubini theorem:
 \[
  \begin{split}
   \frac{1}{\eps^2} 
   \abs{\int_{B\times\Omega_\eps^c} 
   K_\eps(x-y)v(x)\cdot u_{\mathrm{bd}}(y)\,\d x\,\d y}
   \leq \norm{u_{\mathrm{bd}}}_{L^\infty(\R^3)}
   \int_{B} \norm{H_\eps(x)} \abs{v(x)} \,\d x
  \end{split}
 \]
 where~$H_\eps$ is defined by~\eqref{Heps}. 
 Since~$u_{\mathrm{bd}}$ and~$v$ both take values in the 
 bounded set~$\QQ$, and since~$\abs{B}\lesssim\rho^3\lesssim\rho$,
 by Lemma~\ref{lemma:H} {\BBB we have}
 \begin{equation} \label{omega1}
  \begin{split}
   \frac{1}{\eps^2} 
   \abs{\int_{B\times\Omega_\eps^c} 
   K_\eps(x-y)v(x)\cdot u_{\mathrm{bd}}(y)\,\d x\,\d y}
   {\BBB \lesssim \eps^{q - \beta q - 2}\rho}
  \end{split}
 \end{equation}
 From~\eqref{omega0} and~\eqref{omega1}, we deduce
 \begin{equation} \label{omega2}
  \begin{split}
   E_\eps(\tilde{u}_\eps) - \tilde{E}_\eps(\tilde{u}_\eps)  
   \leq E_\eps(v) - \tilde{E}_\eps(v) + {\BBB C \eps^{q - \beta q - 2}\rho}
  \end{split}
 \end{equation}
 On the other hand, 
 \begin{equation} \label{omega3}
  \tilde{E}_\eps(\tilde{u}_\eps) \leq \tilde{E}_\eps(v) ,
 \end{equation}
 because~$\tilde{u}_\eps$ is a minimiser for~$\tilde{E}_\eps$
 and~$v\in\tilde{\mathscr{A}}_\eps$. Combining~\eqref{omega2} and~\eqref{omega3},
 the lemma follows.
\end{proof}

{\BBB Finally, we prove the analogue of the decay lemma,
Lemma~\ref{lemma:decay}. }

\begin{lemma} \label{lemma:decaytilde}
 {\BBB There exist~$\eta>0$, $\theta\in (0, \, 1/2)$
 and~$\eps_*>0$ such that, for any
 ball~$B_{\rho}(x_0)\subseteq\Omega$, any~$\eps\in (0, \, \eps_*\rho)$
 and any minimiser~$\tilde{u}_\eps$ of~$\tilde{E}_\eps$ 
 in~$\tilde{\mathscr{A}}_\eps$ such that 
 \[
  F_\eps(\tilde{u}_\eps, \, B_{\rho}(x_0)) 
  \leq \eta^2 \rho,
 \]
 there holds
 \begin{equation} \label{decaytilde}
  \begin{split}
   & F_\eps(\tilde{u}_\eps, \, B_{\theta\rho}(x_0))
   \leq \frac{\theta}{2} F_\eps(\tilde{u}_\eps, \, B_{\rho}(x_0))
   + \left(\frac{\eps}{\rho}\right)^{2q - 2\beta q - 4} \rho.
  \end{split}
 \end{equation} }
\end{lemma}

{\BBB The estimate~\eqref{decaytilde}
is slightly worse that the corresponding estimate from
Lemma~\ref{lemma:decay}, i.e.~Equation~\eqref{decay},
because the exponent of~$\eps/\rho$ at the right-hand side
is strictly less than~$2q - 4$.
Nevertheless, \eqref{decaytilde} is still enough for our
purposes. Indeed, the assumption~$\beta< 1 - 7/(2q)$ implies that
$\bar{q} := q - \beta q > 7/2$. Then, the proofs of
Theorem~\ref{th:Holder} and Theorem~\ref{th:conv}
carry over; it suffices to replace all the occurrences of~$q$ with~$\bar{q}$.
In particular,} once Lemma~\ref{lemma:decaytilde} is proved, 
Theorem~\ref{th:Holder-tilde} and Theorem~\ref{th:conv-tilde}
follow. 

\begin{proof}[Proof of Lemma~\ref{lemma:decaytilde}]
 {\BBB Suppose that~$B_\rho(x_0)$ is a ball contained in~$\Omega$
 and that~$u\colon\Omega_\eps\to\QQ$. We 
 consider~$U_\rho := (\Omega - x_0)/\rho$, 
 $U_{\eps,\rho} := (\Omega_\eps - x_0)/\rho$ and define
 $u_{\rho}\colon U_\rho\to\QQ$
 as $u_{\rho}(y) := u(x_0 + \rho y)$. Then,
 \[
  \begin{split}
   \rho^{-1}\tilde{E}(u)
   = F_{\eps/\rho}^\nl(u_\rho, \, U_{\eps,\rho})
   + \frac{1}{(\eps/\rho)^2} \int_{U_\rho} \psi_b(u_\rho(x)) \, \d x
   + \int_{U_\rho} H_{\eps/\rho}^{\rho, x_0}(x) 
   \cdot u_\rho(x)^{\otimes 2} \, \d x
  \end{split}
 \]
 where~$F^\nl_\eps$ is defined by~\eqref{locintenergy} and
 \[
  H_{\eps/\rho}^{\rho, x_0}(x) := \frac{1}{(\eps/\rho)^2}
  \int_{\R^3\setminus (\Omega_\eps - x_0 - \rho x)/\eps}K(z) \, \d z
  = \rho^2 H_{\eps}(x_0 + \rho x)
 \]
 Lemma~\ref{lemma:H} implies that
 \[
  \sup_{x\in U_\rho}\|H_{\eps/\rho}^{\rho, x_0}(x)\| 
  = \o(\rho^2 \eps^{q - \beta q - 2}) 
  = \o\!\left((\eps/\rho)^{q - \beta q - 2}\right)
 \]
 As a consequence, if we prove the lemma in case~$x_0 = 0$, $\rho = 1$,
 then the general statement will follow, by a scaling argument.
 
 Suppose the lemma does not hold.
 Then, for any~$\theta\in (0, \, 1/2)$, there exist
 a sequence~$\eps_j\to 0$ and minimisers~$\tilde{u}_j$
 of~$\tilde{E}_{\eps_j}$ such that
 \begin{gather}
  \eta_j^2 := F_{\eps_j}(\tilde{u}_j, \, B_1) \to 0 
   \qquad \textrm{as } j\to+\infty, \label{smallenergytilde} \\
  F_{\eps_j} (\tilde{u}_j, \, B_\theta) 
   > \frac{\theta\eta_j^2}{2} + \eps_j^{2q - 2\beta q - 4}
   \geq  \frac{\theta\eta_j^2}{2} + \eps_j^{2q - 4}
   \label{nodecaytilde}
 \end{gather}
 Lemma~\ref{lemma:average} and Lemma~\ref{lemma:L2conv} carry over,
 so there exists a sequence of points~$\zeta_j\in\NN$ such that, 
 up to a non-relabelled subsequence, $\zeta_j\to\zeta$ and
 \begin{equation} \label{decaytilde0}
  \tilde{v}_j := \frac{\tilde{u}_j - \zeta_j}{\eta_j}\to \tilde{v}
  \qquad \textrm{strongly in } L^2(B_{1/2}) \textrm{ and a.e.}
 \end{equation}
 Moreover, the limit map~$\tilde{v}$ belongs to~$H^1(B_{1/2}, \, \T_{\zeta}\NN)$.
 Let~$s\in (0, \, 1/2)$ and let~$\tilde{v}^*\in H^1(B_{1/2}, \, \T_\zeta\NN)$
 be such that~$\tilde{v}^* = \tilde{v}$ a.e.~in~$B_{1/2}\setminus\bar{B}_s$.
 We construct admissible competitors~$\tilde{\xi}_\eps\in
 L^\infty(\Omega_\eps, \, \QQ)$ exactly as before,
 by applying Lemma~\ref{lemma:nonlocinterp}. We recall that there 
 exist radii~$r$, $t$ with~$\max(1/2, \, s) < r < t < 1/2$
 such that~$\tilde{\xi}_\eps = \tilde{u}_\eps$ out of~$B_t$ and
 \begin{equation} \label{decaytilde2}
  \frac{\tilde{\xi}_j - \zeta_j}{\eta_j} \to \tilde{v}^*
  \qquad \textrm{strongly in } H^1(B_r)
 \end{equation}
 However, $\tilde{u}_j$ is now a minimiser of~$\tilde{E}_{\eps_j}$,
 not of~$E_{\eps_j}$. As a result, we have the inequality
 \[
  \begin{split}
   F_{\eps_j}(\tilde{u}_j, \, B_t) 
    + \Gamma_{\eps_j}(\tilde{u}_j, \, B_t, \, \Omega_\eps\setminus B_t)
   &\leq F_{\eps_j}(\tilde{\xi}_j, \, B_t)
    + \Gamma_{\eps_j}(\tilde{\xi}_j, \, B_t, \, \Omega_\eps\setminus B_t) \\
   &\qquad + \int_{B_t} H_{\eps_j}(x)\cdot 
    \left(\tilde{\xi}_j(x)^{\otimes 2} 
    - \tilde{u}_j(x)^{\otimes 2}\right) \d x,
  \end{split}
 \]
 with an additional term at the right-hand side.
 (We recall that~$\Gamma_\eps$ is defined by~\eqref{locdefect}.)
 We claim that
 \begin{equation} \label{decaytilde1}
  I_j := \frac{1}{\eta^2_j} \int_{B_t} H_{\eps_j}(x)
  \cdot \left(\tilde{\xi}_j(x)^{\otimes 2}
  - \tilde{u}_j(x)^{\otimes 2}\right) \d x\to 0
 \end{equation}
 as~$j\to+\infty$. Once~\eqref{decaytilde1} is proved,
 the rest of the arguments carry over.
 
 Equation~\eqref{Heps} implies that the matrix~$H_\eps(x)$ 
 is symmetric, for any~$x\in B_1$. Then,
 \[
  I_j = \frac{1}{\eta^2_j} \int_{B_t} H_{\eps_j}(x)
  \left(\tilde{\xi}_j(x) - \tilde{u}_j(x)\right)
  \cdot \left(\tilde{\xi}_j(x) + \tilde{u}_j(x)\right) \d x
 \]
 As both~$\tilde{\xi}_j$ and~$\tilde{u}_j$
 take their values in the bounded set~$\QQ$, the 
 H\"older inequality gives 
 \[
  I_j \lesssim \eta_j^{-2} \|H_{\eps_j}\|_{L^\infty(B_t)}
  \left(\|\tilde{\xi}_\eps - \zeta_j\|_{L^2(B_{1/2})} 
  + \norm{\tilde{u}_j - \zeta_j}_{L^2(B_{1/2})}\right)
 \]
 We can further estimate the right-hand side
 by applying~\eqref{decaytilde0}, \eqref{decaytilde2} 
 and Lemma~\ref{lemma:H}:
 \[
  I_j = \o\!\left(\frac{\eps_j^{q - \beta q - 2}}{\eta_j}\right)
 \]
 However, \eqref{smallenergytilde} and~\eqref{nodecaytilde}
 imply that~$\eta_j^2 \geq \eps^{2q - 2\beta q - 4}$,
 so~$\eta_j \geq \eps^{q - \beta q - 2}$ and~\eqref{decaytilde1} follows.}
\end{proof}

\paragraph*{Acknowledgements.}
The authors are grateful to Arghir D. Zarnescu,
who brought the problem to their attention.
Part of this work was carried out when the authors were visiting
the \emph{Centro Internazionale per la Ricerca Matematica} (CIRM)
in Trento (Italy), supported by the Research-in-Pairs program.
The authors would like to thank the CIRM for its hospitality.
This research has been partially supported by the Basque Government through the BERC 2018-2021 program; and by Spanish Ministry of Economy and Competitiveness MINECO through BCAM Severo Ochoa excellence accreditation SEV-2017-0718 and through project MTM2017-82184-R funded by (AEI/FEDER, UE) and acronym ``DESFLU''.
G. C. was partially supported by GNAMPA-INdAM.


\bibliographystyle{plain}
\bibliography{bib4}

\begin{thebibliography}{10}

\bibitem{AlbertiBellettini}
G.~Alberti and G.~Bellettini.
\newblock A non-local anisotropic model for phase transitions: asymptotic
  behavior of rescaled energies.
\newblock {\em European Journal of Applied Mathematics}, 9:261--284, 1998.

\bibitem{ball2010nematic}
John~M Ball and Apala Majumdar.
\newblock Nematic liquid crystals: from {M}aier-{S}aupe to a continuum theory.
\newblock {\em Molecular crystals and liquid crystals}, 525(1):1--11, 2010.

\bibitem{BaumanParkPhillips}
P.~Bauman, J.~Park, and D.~Phillips.
\newblock Analysis of nematic liquid crystals with disclination lines.
\newblock {\em Archive for Rational Mechanics and Analysis}, 205(3):795--826,
  Sep 2012.

\bibitem{bellido2015hyperelasticity}
Jos{\'e}~C Bellido, Carlos Mora-Corral, and Pablo Pedregal.
\newblock Hyperelasticity as a ${\Gamma}$-limit of peridynamics when the
  horizon goes to zero.
\newblock {\em Calculus of Variations and Partial Differential Equations},
  54(2):1643--1670, 2015.

\bibitem{bethuel1994ginzburg}
F.~Bethuel, H.~Brezis, and F.~H{\'e}lein.
\newblock {\em Ginzburg-{L}andau {V}ortices}.
\newblock Progress in Nonlinear Differential Equations and their Applications,
  13. Birkh\"auser Boston Inc., Boston, MA, 1994.

\bibitem{bethuel2014}
Fabrice Bethuel.
\newblock A new obstruction to the extension problem for {S}obolev maps between
  manifolds.
\newblock {\em J. Fixed Point Theory Appl.}, 15:155--183, 2014.

\bibitem{bowick2017mathematics}
Mark~J Bowick, David Kinderlehrer, Govind Menon, and Charles Radin.
\newblock {\em Mathematics and Materials}, volume~23.
\newblock American Mathematical Soc., 2017.

\bibitem{pirla3}
G.~Canevari.
\newblock Line defects in the small elastic constant limit of a
  three-dimensional {L}andau-de {G}ennes model.
\newblock {\em Archive for Rational Mechanics and Analysis}, 223(2):591--676,
  Feb 2017.

\bibitem{canevari2019minimizers}
Giacomo Canevari, Apala Majumdar, and Bianca Stroffolini.
\newblock Minimizers of a {L}andau--de {G}ennes energy with a subquadratic
  elastic energy.
\newblock {\em Archive for Rational Mechanics and Analysis}, 233(3):1169--1210,
  2019.

\bibitem{ContrerasLamy-Biaxial}
Andres Contreras and Xavier Lamy.
\newblock Biaxial escape in nematics at low temperature.
\newblock {\em Journal of Functional Analysis}, 272(10):3987 -- 3997, 2017.

\bibitem{contreras2018singular}
Andres Contreras and Xavier Lamy.
\newblock Singular perturbation of manifold-valued maps with anisotropic
  energy.
\newblock {\em arXiv preprint arXiv:1809.05170}, 2018.

\bibitem{contreras2018convergence}
Andres Contreras, Xavier Lamy, and R{\'e}my Rodiac.
\newblock On the convergence of minimizers of singular perturbation
  functionals.
\newblock {\em Indiana University mathematics journal}, 67(4):1665--1682, 2018.

\bibitem{creyghton2018scratching}
RNP Creyghton and BM~Mulder.
\newblock Scratching a 50-year itch with elongated rods.
\newblock {\em Molecular Physics}, 116(21-22):2742--2756, 2018.

\bibitem{de1995physics}
Pierre-Gilles de~Gennes and Jacques Prost.
\newblock The physics of liquid crystals (international series of monographs on
  physics).
\newblock {\em Oxford University Press, USA}, 2:4, 1995.

\bibitem{Fratta16}
Giovanni Di~Fratta, Jonathan~M. Robbins, Valeriy Slastikov, and Arghir
  Zarnescu.
\newblock Half-integer point defects in the {Q}-tensor theory of nematic liquid
  crystals.
\newblock {\em J. Nonlinear Sci.}, 26:121--140, 2016.

\bibitem{diFrattaetal2020}
Giovanni Di~Fratta, Jonathan~M. Robbins, Valeriy Slastikov, and Arghir
  Zarnescu.
\newblock Landau-de {G}ennes corrections to the {O}seen-{F}rank theory of
  nematic liquid crystals.
\newblock {\em Archive for Rational Mechanics and Analysis}, 236(2):1089--1125,
  2020.

\bibitem{eveson1995compactness}
SP~Eveson.
\newblock Compactness criteria for integral operators in ${L^\infty}$ and
  ${L^1}$ spaces.
\newblock {\em Proceedings of the American Mathematical Society},
  123(12):3709--3716, 1995.

\bibitem{frank1958liquid}
Frederick~C Frank.
\newblock I. {L}iquid crystals. {O}n the theory of liquid crystals.
\newblock {\em Discussions of the Faraday Society}, 25:19--28, 1958.

\bibitem{garlea2017landau}
Ioana~C G{\^a}rlea and Bela~M Mulder.
\newblock The {L}andau-de {G}ennes approach revisited: {A} minimal
  self-consistent microscopic theory for spatially inhomogeneous nematic liquid
  crystals.
\newblock {\em The Journal of chemical physics}, 147(24):244505, 2017.

\bibitem{Giaquinta83}
Mariano Giaquinta.
\newblock {\em Multiple Integrals in the Calculus of Variations and Nonlinear
  Elliptic Systems}.
\newblock Princeton University Press, 1983.

\bibitem{gilbarg2015elliptic}
David Gilbarg and Neil~S Trudinger.
\newblock {\em Elliptic partial differential equations of second order}.
\newblock springer, 2015.

\bibitem{GolovatyMontero}
D.~Golovaty and J.~A. Montero.
\newblock On minimizers of a {L}andau-de {G}ennes energy functional on planar
  domains.
\newblock {\em Arch. Rational Mech. Anal.}, 213(2):447--490, 2014.

\bibitem{han2015microscopic}
Jiequn Han, Yi~Luo, Wei Wang, Pingwen Zhang, and Zhifei Zhang.
\newblock From microscopic theory to macroscopic theory: a systematic study on
  modeling for liquid crystals.
\newblock {\em Archive for Rational Mechanics and Analysis}, 215(3):741--809,
  2015.

\bibitem{HKL}
R.~Hardt, D.~Kinderlehrer, and F.-H. Lin.
\newblock Existence and partial regularity of static liquid crystal
  configurations.
\newblock {\em Comm. Math. Phys.}, 105(4):547--570, 1986.

\bibitem{HardtLin}
Robert Hardt and Fang-Hua Lin.
\newblock Mappings minimizing the {L}p norm of the gradient.
\newblock {\em Communications on Pure and Applied Mathematics}, 40(5):555--588,
  1987.

\bibitem{HenaoMajumdarPisante}
A.~Henao, D.~Majumdar and A.~Pisante.
\newblock Uniaxial versus biaxial character of nematic equilibria in three
  dimensions.
\newblock {\em Calculus of Variations and Partial Differential Equations},
  56(2):55, Apr 2017.

\bibitem{IgnatJerrard}
R.~Ignat and R.~Jerrard.
\newblock Renormalized energy between vortices in some {G}inzburg-{L}andau
  models on 2-dimensional {R}iemannian manifolds.
\newblock Preprint arXiv~1910.02921.

\bibitem{INSZ-Hedgehog}
R.~Ignat, L.~Nguyen, V.~Slastikov, and A.~Zarnescu.
\newblock Stability of the melting hedgehog in the {L}andau-de {G}ennes theory
  of nematic liquid crystals.
\newblock {\em Arch. Rational Mech. Anal.}, 215(2):633--673, 2015.

\bibitem{INSZ-Instability}
Radu Ignat, Luc Nguyen, Valeriy Slastikov, and Arghir Zarnescu.
\newblock Instability of point defects in a two-dimensional nematic liquid
  crystal model.
\newblock {\em Annales de l'Institut Henri Poincaré C, Analyse non linéaire},
  33(4):1131 -- 1152, 2016.

\bibitem{INSZ-2dstability}
Radu Ignat, Luc Nguyen, Valeriy Slastikov, and Arghir Zarnescu.
\newblock Stability of point defects of degree $\pm 1/2$ in a two-dimensional
  nematic liquid crystal model.
\newblock {\em Calc. Var.}, 55:119, 2016.

\bibitem{INSZ-symmetry}
Radu Ignat, Luc Nguyen, Valeriy Slastikov, and Arghir Zarnescu.
\newblock Symmetry and multiplicity of solutions in a two-dimensional
  {L}andau–de {G}ennes model for liquid crystals.
\newblock {\em Arch. Ration. Mech. Anal.}, 237:1421--1473, 2020.

\bibitem{katriel1986free}
J~Katriel, GF~Kventsel, GR~Luckhurst, and TJ~Sluckin.
\newblock Free energies in the {L}andau and molecular field approaches.
\newblock {\em Liquid Crystals}, 1(4):337--355, 1986.

\bibitem{kitavtsev2016liquid}
Georgy Kitavtsev, J.~M. Robbins, Valeriy Slastikov, and Arghir Zarnescu.
\newblock Liquid crystal defects in the {L}andau-de {G}ennes theory in two
  dimensions --- {B}eyond the one-constant approximation.
\newblock {\em Mathematical Models and Methods in Applied Sciences},
  26(14):2769--2808, 2016.

\bibitem{li2015local}
Sirui Li, Wei Wang, and Pingwen Zhang.
\newblock Local well-posedness and small {D}eborah limit of a molecule-based
  {Q}-tensor system.
\newblock {\em Discrete \& Continuous Dynamical Systems-Series B}, 20(8), 2015.

\bibitem{liu2017oseen}
Yuning Liu and Wei Wang.
\newblock The {O}seen--{F}rank limit of {O}nsager's molecular theory for liquid
  crystals.
\newblock {\em Archive for Rational Mechanics and Analysis}, pages 1--30, 2017.

\bibitem{liu2018small}
Yuning Liu and Wei Wang.
\newblock The small {D}eborah number limit of the {D}oi--{O}nsager equation
  without hydrodynamics.
\newblock {\em Journal of Functional Analysis}, 275(10):2740--2793, 2018.

\bibitem{Luckhaus}
S.~Luckhaus.
\newblock Partial {H}\"older continuity for minima of certain energies among
  maps into a {R}iemannian manifold.
\newblock {\em Indiana Univ. Math. J.}, 37(2):349--367, 1988.

\bibitem{maier1959einfache}
Wilhelm Maier and Alfred Saupe.
\newblock Eine einfache molekular-statistische {T}heorie der nematischen
  kristallinfl{\"u}ssigen {P}hase. {T}eil l1.
\newblock {\em Zeitschrift f{\"u}r Naturforschung A}, 14(10):882--889, 1959.

\bibitem{majumdar2010landau}
Apala Majumdar and Arghir Zarnescu.
\newblock Landau--de {G}ennes theory of nematic liquid crystals: the
  {O}seen--{F}rank limit and beyond.
\newblock {\em Archive for rational mechanics and analysis}, 196(1):227--280,
  2010.

\bibitem{MironescuVanSchaftingen-trace}
Petru Mironescu and Jean Van~Schaftingen.
\newblock Trace theory for {S}obolev mappings into a manifold.
\newblock Preprint arXiv~2001.02226, 2020.

\bibitem{mottram2014introduction}
Nigel~J Mottram and Christopher~JP Newton.
\newblock Introduction to {Q}-tensor theory.
\newblock {\em arXiv preprint arXiv:1409.3542}, 2014.

\bibitem{NguyenZarnescu}
L.~Nguyen and A.~Zarnescu.
\newblock Refined approximation for minimizers of a {L}andau-de {G}ennes energy
  functional.
\newblock {\em Cal. Var. Partial Differential Equations}, 47(1-2):383--432,
  2013.

\bibitem{onsager1949effects}
Lars Onsager.
\newblock The effects of shape on the interaction of colloidal particles.
\newblock {\em Annals of the New York Academy of Sciences}, 51(1):627--659,
  1949.

\bibitem{rockafellar1970convex}
R~Tyrrell Rockafellar.
\newblock {\em Convex analysis}.
\newblock Number~28. Princeton University Press, 1970.

\bibitem{silling2008convergence}
Stewart~A Silling and Richard~B Lehoucq.
\newblock Convergence of peridynamics to classical elasticity theory.
\newblock {\em Journal of Elasticity}, 93(1):13, 2008.

\bibitem{Simon-Harmonic}
L.~Simon.
\newblock Theorems on the regularity and singularity of minimal surfaces and
  harmonic maps.
\newblock In {\em Lectures on geometric variational problems ({S}endai, 1993)},
  pages 115--150. Springer, Tokyo, 1996.

\bibitem{taylor2016maximum}
Jamie~M Taylor.
\newblock Maximum entropy methods as the bridge between microscopic and
  macroscopic theory.
\newblock {\em Journal of Statistical Physics}, 164(6):1429--1459, 2016.

\bibitem{taylor2018oseen}
Jamie~M Taylor.
\newblock Oseen--{F}rank-type theories of ordered media as the ${\Gamma}$-limit
  of a non-local mean-field free energy.
\newblock {\em Mathematical Models and Methods in Applied Sciences},
  28(04):615--657, 2018.

\bibitem{taylor2019gamma}
Jamie~M Taylor.
\newblock {$\Gamma$}-convergence of a mean-field model of a chiral doped
  nematic liquid crystal to the oseen--frank description of cholesterics.
\newblock {\em Nonlinearity}, 33(6):3062, 2020.

\end{thebibliography}

\end{document}